\newtheorem{thm}{Theorem}[section]
\newtheorem{lemma}[thm]{Lemma}
\newtheorem{cor}[thm]{Corollary}
\newtheorem{remark}{Remark}
\newcommand{\bpm}{\begin{pmatrix}}
\newcommand{\abs}[1]{\left\vert#1 \right\vert}
\def\epm{\end{pmatrix}}
\def\bs{\boldsymbol}
\def\d{d}
\def\eq{\begin{equation}}
\def\endeq{\end{equation}}
\def\eqarr{\begin{eqnarray}}
\def\endeqarr{\end{eqnarray}}
\def\eqnn{\begin{equation*}}
\def\endeqnn{\end{equation*}}
\def\ds{\begin{displaystyle}}
\def\endds{\end{displaystyle}}
\def\e{\epsilon}
\def\Ai{\text{Ai}}
\def\cauchy{\mathfrak{C}}
\newcommand{\Airy}{\mathbb{A}}
\definecolor{light-blue}{rgb}{0.8,0.85,1}
\definecolor{blue}{rgb}{0,0,1}
\definecolor{red}{rgb}{1,0,0}
\begin{document}

\title{{\bf Asymptotics of Tracy-Widom distributions and
the total integral of a Painlev\'e II function}}

\author{{\bf Jinho Baik}
\footnote{Department of Mathematics, University of Michigan, Ann
Arbor, MI, 48109, baik@umich.edu} \footnote{Courant Institute of
Mathematical Sciences, New York University}, {\bf Robert
Buckingham}\footnote{Department of Mathematics, University of
Michigan, Ann Arbor, MI, 48109, robbiejb@umich.edu} and {\bf Jeffery
DiFranco}\footnote{Department of Mathematics, University of
Michigan, Ann Arbor, MI, 48109, jeffcd@umich.edu}}

\date{\today}
\maketitle

\begin{abstract}
The Tracy-Widom distribution functions involve integrals of a
Painlev\'e II function starting from positive infinity. In this
paper, we express the Tracy-Widom distribution functions in terms of
integrals starting from minus infinity. There are two
consequences of these new representations. The first is the
evaluation of the total integral of the Hastings-McLeod solution of
the Painlev\'e II equation. The second is the evaluation of the
constant term of the asymptotic expansions of the Tracy-Widom
distribution functions as the distribution parameter approaches minus
infinity. For the GUE
Tracy-Widom distribution function, this gives an alternative proof
of the recent work of Deift, Its, and Krasovsky. The constant terms
for the GOE and GSE Tracy-Widom distribution functions are new.
\end{abstract}

\section{Introduction}

\label{intro}

Let $F_1(x), F_2(x)$, and $F_4(x)$ denote the GOE, GUE, and GSE
Tracy-Widom distribution functions, respectively. They are defined
as  \cite{Tracy:1994, Tracy:1996}
\begin{equation}\label{eq:TW0}
    F_1(x)= F(x)E(x), \qquad F_2(x)= F(x)^2, \qquad F_4(x)=
    \frac12\bigg\{ E(x)+ \frac1{E(x)} \bigg\}F(x),
\end{equation}
where
\begin{equation}\label{eq:TW1}
    F(x)= \exp\bigg( -\frac12 \int_x^\infty R(s)ds\bigg),
    \qquad
    E(x)= \exp\bigg( -\frac12 \int_x^\infty q(s)ds \bigg).
\end{equation}
Here the (real) function $q(x)$ is the solution to the Painlev\'e II
equation
\begin{equation}\label{eq:PII}
    q''=2q^3+xq,
\end{equation}
that satisfies the boundary condition
\begin{equation}\label{eq:PIIbdry}
  q(x)\sim \Ai(x),
  \qquad x\to +\infty.
\end{equation}
Recall \cite{Abramowitz:1965-book} that the Airy function $\Ai(x)$
satisfies $\Ai''(x)=x\Ai(x)$ and
\begin{equation}
    \Ai(x) \sim \frac{1}{2\sqrt{\pi}x^{1/4}}e^{-\frac23x^{3/2}},
    \qquad x\to +\infty.
\end{equation}
There is a unique global solution $q(x)$ to the
equation~\eqref{eq:PII} with the condition~\eqref{eq:PIIbdry} (the
Hastings-McLeod solution) \cite{Hastings:1980}. The function
$R(s)$ is defined as
\begin{equation}\label{eq:Rdef}
   R(x) =  \int_x^\infty (q(s))^2ds.
\end{equation}
By taking derivatives and using~\eqref{eq:PII}
and~\eqref{eq:PIIbdry} (see, for example, (1.15) of
\cite{Tracy:1994} and (2.6) of \cite{Baik:2000}), the function
$R(x)$ can also be written as
\begin{equation}\label{eq:Rqlocal}
    R(x)= (q'(x))^2-x(q(x))^2 - (q(x))^4.
\end{equation}
Integrating by parts, $F(x)$ can be written as
\begin{equation}
    F(x)= \exp\bigg( -\frac12 \int_x^\infty (s-x)(q(s))^2ds\bigg),
\end{equation}
which is commonly used in the literature.

Notice that~\eqref{eq:TW1} involves integrals from $x$ to positive
infinity. The main results of this paper are the following
representations of $F(x)$ and $E(x)$, which involve integrals
from minus infinity to $x$.
\begin{thm}\label{thm1}
For $x<0$,
\begin{equation}\label{Fdown}
    F(x)= 2^{1/48}e^{\frac12 \zeta'(-1)}
    \frac{e^{-\frac{1}{24}|x|^3}}{|x|^{1/16}} \exp\bigg\{ \frac12\int_{-\infty}^x\bigg(
    R(y)-\frac14y^2+\frac1{8y}\bigg)dy \bigg\},
\end{equation}
where $\zeta(z)$ is the Riemann-zeta function, and
\begin{equation}
\label{Edown}
    E(x) = \frac1{2^{1/4}} e^{- \frac{1}{3\sqrt{2}} |x|^{3/2}}
    \exp\bigg\{ \frac12 \int_{-\infty}^x \bigg(
    q(y)-\sqrt{\frac{|y|}2} \bigg) dy \bigg\}.
\end{equation}
\end{thm}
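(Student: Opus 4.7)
My plan is to prove each of \eqref{Fdown} and \eqref{Edown} in two stages. First, show that both sides have equal logarithmic derivatives on $(-\infty,0)$, so that they differ by only a multiplicative constant; then determine that constant by asymptotic matching as $x\to-\infty$. The first stage is a quick calculation; the second is the real technical burden.

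\textbf{Logarithmic derivative.} From \eqref{eq:TW1} and \eqref{eq:Rdef}, $(\log F)'(x)=\tfrac12 R(x)$ and $(\log E)'(x)=\tfrac12 q(x)$. Writing $|x|=-x$ for $x<0$, the derivative of the logarithm of the right-hand side of \eqref{Fdown} is
\[
\frac{x^2}{8}-\frac{1}{16x}+\frac12\!\left(R(x)-\frac{x^2}{4}+\frac{1}{8x}\right)=\frac{R(x)}{2},
\]
where the three summands come from $-|x|^3/24$, $-\tfrac{1}{16}\log|x|$, and the integral, respectively. An analogous cancellation for \eqref{Edown} uses $\tfrac{d}{dx}[-(-x)^{3/2}/(3\sqrt 2)]=\tfrac12\sqrt{|x|/2}$ and produces $\tfrac12 q(x)$. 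Hence each identity holds on $(-\infty,0)$ up to a positive multiplicative constant.

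\textbf{Convergence of the integrals.} For the integrals in \eqref{Fdown}, \eqref{Edown} to be well-defined, I need asymptotic expansions of the Hastings-McLeod solution at $-\infty$. The classical expansion
\[
q(y)=\sqrt{-y/2}\left(1+\frac{1}{8y^3}+O(|y|^{-6})\right),\qquad y\to-\infty,
\]
yields $q(y)-\sqrt{|y|/2}=O(|y|^{-5/2})$, and substitution into \eqref{eq:Rqlocal} gives $R(y)=y^2/4-1/(8y)+O(|y|^{-4})$. Both integrands therefore decay at least like $|y|^{-5/2}$, so the integrals converge absolutely and each tends to $0$ as $x\to-\infty$.

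\textbf{Fixing the constants (main obstacle).} What remains is the determination of the two prefactors $2^{1/48}e^{\zeta'(-1)/2}$ and $2^{-1/4}$, and this is where I expect the bulk of the work. I would use Deift--Zhou nonlinear steepest descent on the underlying Riemann--Hilbert problems. For \eqref{Edown}, analyzing the Flaschka--Newell Riemann--Hilbert problem for Painlev\'e II with Hastings-McLeod Stokes data as $x\to-\infty$ gives $q(x)$ with enough precision that, after integrating the expansion, one can read off the prefactor of $e^{-|x|^{3/2}/(3\sqrt2)}$ in $E(x)$ and match it with $2^{-1/4}$. For \eqref{Fdown} the analogous steepest-descent analysis of the Airy-kernel Fredholm determinant $F_2(x)=F(x)^2$ reduces, as $x\to-\infty$, to a sine-kernel determinant on a growing interval; the constant term in its asymptotic expansion involves the Barnes $G$-function and produces $2^{1/24}e^{\zeta'(-1)}$ for $F_2$, whose square root is the prefactor in \eqref{Fdown}. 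Controlling the sub-leading terms of these two steepest-descent analyses with full precision is the main obstacle and the heart of the proof.
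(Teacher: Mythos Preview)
Your reduction via logarithmic derivatives is correct and is exactly the observation the paper makes in Section~1.2: Theorem~\ref{thm1} is equivalent to determining the constants in the $x\to-\infty$ asymptotics of $F(x)$ and $E(x)$. The entire difficulty, as you acknowledge, lies in pinning down those two numbers.

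For $F(x)$, your plan to invoke the Airy-kernel determinant analysis is essentially the Deift--Its--Krasovsky argument, which the paper explicitly acknowledges as a valid alternative route to \eqref{Fdown}. The paper's own proof is different: it realizes $F(x)^2$ as a double-scaling limit of the Toeplitz determinant $e^{-t^2}D_n(t)$ with $n=[2t+xt^{1/3}]$, writes $D_n$ as a product of leading coefficients $\kappa_q^{-2}$ \emph{from $q=1$ up to $n$} (rather than from $n$ to $\infty$), and splits the resulting sum into an ``exact part'' (small $q$, evaluated by a Selberg integral and the Barnes $G$-function, which is where $\zeta'(-1)$ enters), an ``Airy part'' (intermediate $q$, requiring a refined steepest-descent RHP analysis with an explicit next-order correction), and a ``Painlev\'e part'' (giving the finite integral of $R$).

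For $E(x)$, however, your proposal has a genuine gap. Knowing the asymptotic expansion of $q(y)$ at $-\infty$ to any order does \emph{not} determine the constant $C$ in $E(x)\sim C\,e^{-|x|^{3/2}/(3\sqrt2)}$. That constant encodes $\int_c^\infty q(y)\,dy$ for any fixed $c$, which is global information about the Hastings--McLeod solution that no local expansion at $-\infty$ can supply; indeed, Corollary~\ref{cor:total} shows this total integral is \emph{equivalent} to the statement you are trying to prove. Your plan to ``integrate the expansion and read off the prefactor'' is therefore circular. The paper obtains the $E$-constant by the same product-from-below strategy, now applied to the Hankel-type determinants $D^{++}_\ell$, $D^{-+}_\ell$ of Baik--Rains, tracking $\pi_q(0;t)$ (rather than just $\kappa_q$) through the same RHP analysis; the factor $2^{-1/4}$ emerges from Laguerre-type Selberg integrals in the exact part combined with an Euler--Maclaurin computation in the Airy part. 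This $E$-constant was new in the paper, so some argument of comparable depth is unavoidable.
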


\begin{remark}
The formula \eqref{Fdown} also follows from the recent work
\cite{Deift:2006} of Deift, Its, and Krasovsky. See
subsection~\ref{sec:asycons} below for further discussion.
\end{remark}

The integrals in \eqref{Fdown} and \eqref{Edown} converge. Indeed,
it is known that \cite{Hastings:1980, Deift:1995}
\begin{equation}\label{eq:qasymp}
    q(x) = \sqrt{\frac{-x}{2}} \bigg( 1+
    \frac1{8x^3}-\frac{73}{128 x^6} + \frac{10219}{1024x^9}+O(|x|^{-12}) \bigg), \qquad x\to
    -\infty.
\end{equation}
This asymptotic behavior of $q$ was obtained using the integrable
structure of the Painlev\'e II equation (see, for example,
\cite{Fokas:2006-book}). The coefficients of the higher terms in
the above asymptotic expansion can also be computed recursively
(see for example, Theorem 1.28 of \cite{Deift:1995}). For
$R(x)$,~\eqref{eq:Rqlocal} and~\eqref{eq:qasymp} imply that
\begin{equation}\label{eq:PIIneg}
    R(x) = \frac{x^2}{4} \bigg( 1- \frac{1}{2x^3}+ \frac{9}{16x^6}
    - \frac{128}{62 x^9} + O(x^{-12})\bigg), \qquad x\to -\infty.
\end{equation}

We now discuss two consequences of Theorem \ref{thm1}.

\subsection{Total integrals of $q(x)$ and $R(x)$}

Comparing with~\eqref{eq:TW1}, Theorem~\ref{thm1} is equivalent to
the following.
\begin{cor}\label{cor:total}
For $c<0$,
\begin{equation}
    \int_c^\infty R(y)dy + \int_{-\infty}^c \bigg(
R(y)-\frac14y^2+\frac1{8y}\bigg)dy
    = - \frac1{24}\log 2 -
    \zeta'(-1) + \frac1{12}|c|^3 + \frac18\log|c|
\end{equation}
and
\begin{equation}
    \int_c^\infty q(y)dy + \int_{-\infty}^c \bigg(
    q(y)-\sqrt{\frac{|y|}2} \bigg) dy
    = \frac12 \log 2 +\frac{\sqrt{2}}{3}|c|^{3/2} .
\end{equation}
\end{cor}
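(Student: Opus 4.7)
The plan is straightforward: Corollary \ref{cor:total} is merely a logarithmic rearrangement of Theorem \ref{thm1} combined with the definitions \eqref{eq:TW1} of $F(x)$ and $E(x)$. Since we are allowed to assume Theorem \ref{thm1}, no further analytic input is needed—only algebraic bookkeeping.

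First, I would take the logarithm of both sides of \eqref{Fdown}, obtaining
\begin{equation*}
  \log F(c) = \frac{1}{48}\log 2 + \tfrac12\zeta'(-1) - \tfrac{1}{24}|c|^3 - \tfrac1{16}\log|c|
  + \tfrac12 \int_{-\infty}^c\!\Bigl(R(y)-\tfrac14 y^2+\tfrac1{8y}\Bigr)dy.
\end{equation*}
On the other hand, the defining formula \eqref{eq:TW1} gives $\log F(c) = -\tfrac12\int_c^\infty R(s)\,ds$. Equating the two and multiplying through by $-2$ yields exactly the first displayed identity of the corollary.

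Next I would repeat the same procedure for $E(c)$. Taking logarithms in \eqref{Edown} gives
\begin{equation*}
  \log E(c) = -\tfrac14\log 2 - \tfrac{1}{3\sqrt{2}}|c|^{3/2}
  + \tfrac12\int_{-\infty}^c\!\Bigl(q(y)-\sqrt{\tfrac{|y|}{2}}\Bigr)dy,
\end{equation*}
while \eqref{eq:TW1} gives $\log E(c) = -\tfrac12\int_c^\infty q(s)\,ds$. Equating, multiplying by $-2$, and using $\tfrac{2}{3\sqrt{2}}=\tfrac{\sqrt{2}}{3}$ delivers the second identity.

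There is essentially no obstacle here—one only needs to verify that the integrals on both sides converge, which is guaranteed by \eqref{eq:qasymp} and \eqref{eq:PIIneg} (the integrands $q(y)-\sqrt{|y|/2}$ and $R(y)-\tfrac14 y^2+\tfrac1{8y}$ decay like $|y|^{-5/2}$ and $|y|^{-4}$ respectively as $y\to-\infty$, while the standard decay \eqref{eq:PIIbdry} ensures convergence at $+\infty$). Thus the real content lies in proving Theorem \ref{thm1} itself; the corollary follows from it by pure rearrangement.
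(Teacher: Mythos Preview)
Your proposal is correct and follows exactly the paper's own approach: the paper introduces the corollary with the single line ``Comparing with~\eqref{eq:TW1}, Theorem~\ref{thm1} is equivalent to the following,'' and your proof simply spells out that comparison by taking logarithms and rearranging. There is nothing to add.
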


These formulas should be compared with the evaluation of the total
integral of the Airy function \cite{Abramowitz:1965-book}:
\begin{equation}
    \int_{-\infty}^\infty \Ai(y)dy =1.
\end{equation}
Recall that the Airy differential equation is the small amplitude
limit of the Painlev\'e II equation. Unlike the Airy function,
$R(x)$ and $q(x)$ do not decay as $x\to-\infty$, and hence we need
to subtract out the diverging terms in order to make the integrals
finite.

\subsection{Asymptotics of Tracy-Widom distribution functions as $x\to
-\infty$}\label{sec:asycons}

Using formulas~\eqref{eq:TW1} and~\eqref{eq:PIIneg}, Tracy and
Widom computed that (see Section 1.D of \cite{Tracy:1994}) as $x\to
-\infty$,
\begin{equation}\label{eq:F2as}
    F_2(x) = \tau_2
     \frac{e^{-\frac1{12}|x|^3}}{|x|^{1/8}} \bigg(
    1+ \frac{3}{2^6|x|^3} + O(|x|^{-6}) \bigg)\\
\end{equation}
for some undetermined constant $\tau_2$. The constant $\tau_2$ was
conjectured in the same paper \cite{Tracy:1994} to be
\begin{equation}\label{eq:constant}
    \tau_2=2^{1/24}e^{\zeta'(-1)}.
\end{equation}
This conjecture~\eqref{eq:constant} was recently proved by Deift,
Its, and Krasovsky \cite{Deift:2006}. In this paper, we present an
alternative proof of~\eqref{eq:constant}. Moreover, we also compute
the similar constants $\tau_1$ and $\tau_4$ for the GOE and GSE
Tracy-Widom distribution functions. The asymptotics similar
to~\eqref{eq:F2as} follow from~\eqref{eq:TW0}
and~\eqref{eq:qasymp}: as $x\to -\infty$,
\begin{eqnarray}\label{eq:F1as}
    F_1(x) &=& \tau_1
\frac{e^{-\frac1{24}|x|^3-\frac1{3\sqrt{2}}|x|^{3/2}}}{|x|^{1/16}}
    \bigg( 1 -\frac1{24\sqrt{2}|x|^{3/2}}+ O(|x|^{-3})\bigg), \\
    \label{eq:F4as}
    F_4(x) &=& \tau_4
    \frac{e^{-\frac1{24}|x|^3+\frac1{3\sqrt{2}}|x|^{3/2}}}{|x|^{1/16}}
    \bigg( 1 +\frac1{24\sqrt{2}|x|^{3/2}}+ O(|x|^{-3})\bigg).
\end{eqnarray}
Using~\eqref{eq:qasymp} and~\eqref{eq:PIIneg}, Theorem~\ref{thm1}
implies the following.
\begin{cor}
As $x\to -\infty$,
\begin{equation}
    F(x) = 2^{1/48}e^{\frac12\zeta'(-1)}
\frac{e^{-\frac1{24}|x|^3}}{|x|^{1/16}} \bigg(
    1+ \frac{3}{2^7|x|^3} +    O(|x|^{-6})\bigg)
\end{equation}
and
\begin{equation}
    E(x) = \frac{1}{2^{1/4}}  e^{-\frac1{3\sqrt{2}}|x|^{3/2}}
    \bigg( 1 -\frac1{24\sqrt{2}|x|^{3/2}}+ O(|x|^{-3})\bigg).
\end{equation}
Hence
\begin{equation}
    \tau_1= 2^{-11/48}e^{\frac12 \zeta'(-1)},
    \qquad \tau_2= 2^{1/24}e^{\zeta'(-1)},
    \qquad \tau_4= 2^{-35/48} e^{\frac12 \zeta'(-1)}.
\end{equation}
\end{cor}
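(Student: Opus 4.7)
The plan is to reduce the corollary to a straightforward asymptotic expansion of the two integrals that appear in Theorem~\ref{thm1}, and then combine the results using \eqref{eq:TW0}. The only substantive input is the known asymptotic series \eqref{eq:qasymp} and \eqref{eq:PIIneg}; everything else is bookkeeping of powers of $2$, signs, and exponents.

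First I would handle $F(x)$. From \eqref{eq:PIIneg} one has $R(y)-\frac{1}{4}y^2+\frac{1}{8y}=\frac{9}{64 y^4}+O(y^{-7})$ as $y\to-\infty$, so
\begin{equation*}
\int_{-\infty}^x\Big(R(y)-\tfrac14 y^2+\tfrac{1}{8y}\Big)\,dy=\frac{3}{64|x|^3}+O(|x|^{-6}),
\end{equation*}
and inserting this into \eqref{Fdown} and expanding the exponential yields the stated expansion of $F(x)$, with subleading coefficient $\frac{3}{2^7}$. Next, for $E(x)$, I would expand using \eqref{eq:qasymp}: for $y<0$, $q(y)-\sqrt{|y|/2}=-\frac{1}{8\sqrt{2}\,|y|^{5/2}}+O(|y|^{-11/2})$, and direct integration gives $\int_{-\infty}^x(q(y)-\sqrt{|y|/2})\,dy=-\frac{1}{12\sqrt{2}\,|x|^{3/2}}+O(|x|^{-3})$. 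Substituting into \eqref{Edown} and expanding the exponential produces the stated expansion of $E(x)$.

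Finally, I would read off the three constants from \eqref{eq:TW0}. The case $\tau_2=(2^{1/48}e^{\zeta'(-1)/2})^2=2^{1/24}e^{\zeta'(-1)}$ is immediate from $F_2=F^2$. For $\tau_1$, multiplying the leading prefactors in $F$ and $E$ gives $2^{1/48}\cdot 2^{-1/4}=2^{-11/48}$. For $\tau_4$, the key observation is that $E(x)\to 0$ exponentially as $x\to-\infty$, so in the factor $\tfrac{1}{2}(E(x)+1/E(x))$ the term $1/E(x)$ dominates and the term $E(x)$ contributes only a correction of order $\exp(-\frac{2}{3\sqrt{2}}|x|^{3/2})$, which is smaller than any power; hence $F_4(x)\sim\tfrac{1}{2}F(x)/E(x)$, and the leading prefactor becomes $\tfrac{1}{2}\cdot 2^{1/48}\cdot 2^{1/4}=2^{-35/48}$. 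The subleading $O(|x|^{-3/2})$ coefficients in $F_1$ and $F_4$ come entirely from the $E^{\pm 1}$ factor and match $\mp\frac{1}{24\sqrt{2}}$, consistent with \eqref{eq:F1as}--\eqref{eq:F4as}.

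There is no real obstacle here, since Theorem~\ref{thm1} does all the analytic work. The only step requiring any care is the $\tau_4$ computation, where one must verify that the exponentially small $E(x)$ term in $\tfrac{1}{2}(E+1/E)$ genuinely contributes nothing to any finite order of the asymptotic expansion; this is clear because $e^{-\frac{2}{3\sqrt{2}}|x|^{3/2}}$ is beyond all powers of $|x|^{-1}$.
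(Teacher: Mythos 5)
Your proposal is correct and follows exactly the route the paper intends: substitute the asymptotic series \eqref{eq:qasymp}, \eqref{eq:PIIneg} into the integrals of Theorem~\ref{thm1}, integrate term by term, expand the exponentials, and read off $\tau_1,\tau_2,\tau_4$ from \eqref{eq:TW0}, noting for $\tau_4$ that the $E(x)$ term in $\tfrac12(E+1/E)$ is exponentially small. The paper leaves these computations to the reader, and your bookkeeping of the powers of $2$ and the subleading coefficients $\tfrac{3}{2^7}$ and $-\tfrac{1}{24\sqrt2}$ is accurate.
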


Conversely, using~\eqref{eq:qasymp} and~\eqref{eq:PIIneg}, this
Corollary together with~\eqref{eq:TW1} implies
Corollary~\ref{cor:total}, and hence Theorem~\ref{thm1}.

This is one example of so-called \emph{constant problems} in random
matrix theory. One can ask the same question of evaluating the
constant term in the asymptotic expansion in other distribution
functions such as the limiting gap distribution in the bulk or in
the hard edge. For the gap probability distribution in the bulk
scaling limit which is given by the Fredholm determinant of the
sine-kernel, Dyson \cite{Dyson:1976} first conjectured the constant
term for $\beta=2$ in terms of $\zeta'(-1)$ using a formula in an
earlier work \cite{Widom:1971} of Widom. This conjecture was proved
by Ehrhardt \cite{Ehrhardt:2006a} and Krasovsky
\cite{Krasovsky:2004}, independently and simultaneously. A third proof
was given in \cite{Deift:2006b}. The constant problem for $\beta=1$
and $\beta=4$ ensembles in the bulk scaling limit was recently
obtained by Ehrhardt \cite{Ehrhardt:2006b}. For the hard edge of the
$\beta$-Laguerre ensemble associated with the weight $x^me^{-x}$,
the constant was obtained by Forrester \cite{Forrester:1994}
(equation (2.26a)) when $m$ is a non-negative integer and $2/\beta$
is a positive integer.

The above limiting distribution functions in random matrix theory
are expressed in terms of a Fredholm determinant or an integral
involving a Painlev\'e function. For example, the proof of
\cite{Deift:2006} used the Freldhom determinant formula of the GUE
Tracy-Widom distribution:
\begin{equation}\label{eq:FredAiry}
    F_2(x)=\det(1-\Airy_x),
\end{equation}
where $\Airy_x$ is the operator on $L^2((x,\infty))$ whose kernel is
\begin{equation}
    \Airy(u,v)= \frac{\Ai(u)\Ai'(v)-\Ai'(u)\Ai(v)}{u-v}.
\end{equation}
In terms of the Fredholm determinant formula, the difficulty comes
from the fact that even if we know all the eigenvalues
$\lambda_j(x)$ of $\Airy_x$, we still need to evaluate the product
$\prod_{j=1}^\infty(1-\lambda_j(x))$. When one uses the Painlev\'e
function, one faces a similar difficulty of evaluating the total
integral of the Painlev\'e function.

We remark that the asymptotics as $x\to +\infty$ of $F(x)$ and
$E(x)$ (and hence $F_\beta(x)$) are, using~\eqref{eq:PIIbdry},
\begin{eqnarray}
  F(x) &=& 1-\frac{e^{-\frac43x^{3/2}}}{32\pi x^{3/2}} \bigg( 1- \frac{35}{24 x^{3/2}}
  + O(x^{-3})\bigg), \\
  E(x) &=& 1 - \frac{e^{-\frac23x^{3/2}}}{4\sqrt{\pi} x^{3/2}} \bigg( 1- \frac{41}{48 x^{3/2}}
  + O(x^{-3})\bigg).
\end{eqnarray}

\subsection{Outline of the proof}

The Tracy-Widom distribution functions are the limits of a variety
of objects such as the largest eigenvalue of certain ensembles of
random matrices, the length of the longest increasing subsequence of
a random permutation, the last passage time of a certain last
passage percolation model, and the height of a certain random growth
model (see, for example, the survey \cite{Majumdar:2007}).  Dyson
\cite{Dyson:1976} exploited this notion of universality to solve the
constant problem for the sine-kernel determinant.  Namely, among the
many different quantities whose limit is the sine-kernel
determinant, he chose one for which the associated constant term is
explicitly computable (specifically, a certain Toeplitz determinant
on an arc for which the constant term had been obtained by Widom
\cite{Widom:1971}), and then took the appropriate limit while
checking the limit of the constant term. However, the rigorous
proof of this idea was only obtained in the subsequent work
of Ehrhardt \cite{Ehrhardt:2006a} and Krasovsky
\cite{Krasovsky:2004}. In order to apply this idea for $F_2(x)$, the
key step is to choose the appropriate approximate ensemble. In the
work of Deift, Its, and Krasovsky \cite{Deift:2006}, the authors
started with the Laguerre unitary ensemble and took the appropriate
limit while controlling the error terms. In this paper, we use the
fact that $F_\beta(x)$ is a (double-scaling) limit of a
Toeplitz/Hankel determinant.

Let $D_n(t)$ denote the $n\times n$ Toeplitz determinant with symbol
$f(e^{i\theta})=e^{2t\cos (\theta)}$ on the unit circle: \eq
\begin{split}
\label{Dn}
D_n(t) & = \; \det\left(\frac{1}{2\pi}\int_{-\pi}^{\pi}e^{2t\cos\theta}e^{i(j-k)\theta}\d\theta\right)_{0\leq j,k\leq n-1}\\
   & = \; \frac{1}{(2\pi)^n n!}\int_{[-\pi,\pi]^n}e^{2t\sum_{j=1}^n \cos\theta_j}\prod_{1\leq k<\ell\leq n}\abs{e^{i\theta_k}-e^{i\theta_\ell}}^2 \prod_{j=1}^n\d\theta_j.
\end{split}
\endeq
Note that some references (e.g. \cite{Deift:1998-book}) define $D_n$
as an $(n+1)\times (n+1)$ determinant, whereas others (e.g.
\cite{Baik:2001b}) use our convention.  In studying the asymptotics
of the length of the longest increasing subsequence in random
permutations, in \cite{Baik:1999}, the authors proved that when
\begin{equation}
   n=[2t+xt^{1/3}],
\end{equation}
as $t\to \infty$,
\begin{equation}\label{eq:DntoF2}
    e^{-t^2}D_n(t) \to F_2(x).
\end{equation}
The idea of the proof of~\eqref{eq:DntoF2} in \cite{Baik:1999} is as
follows. The Toeplitz determinants are intimately related to
orthogonal polynomials on the unit circle. Let $p_j(z)=\kappa_j
z^j+\cdots$ be the orthonormal polynomial of degree $j$ with respect to the
weight $e^{2 t \cos\theta} \frac{d\theta}{2\pi}$:
\begin{equation}\label{orthcondition}
\int_{-\pi}^{\pi}p_j(e^{i\theta})\overline{p_k(e^{i\theta})}
e^{2 t \cos\theta}\frac{d\theta}{2\pi}=\delta_{jk}
\qquad \text{ for } j,k\geq 0.
\end{equation}
If $\kappa_j>0$ then $p_j$ is unique.  We denote by $\pi_j(z;t)=\pi_j(z)$
the monic orthogonal
polynomial: $p_k(z)=\kappa_k \pi_j(z)$. Then (see, for example,
\cite{Szego:1975-book}) the leading coefficient
$\kappa_j=\kappa_j(t)$ is given by
\begin{equation}\label{kappan}
\kappa_j(t)= \sqrt{\frac{D_j(t)}{D_{j+1}(t)}}.
\end{equation}
As the strong Szeg\"o limit theorem implies that $D_n(t)\to
e^{t^2}$ as $n\to\infty$ for fixed $t$, the left-hand-side
of~\eqref{eq:DntoF2} can be written as
\begin{equation}\label{eq:produp}
   e^{-t^2}D_n(t) = \prod_{q=n}^\infty
\frac{D_q(t)}{D_{q+1}(t)} = \prod_{q=n}^\infty
\kappa_q^2(t).
\end{equation}
The basic result of \cite{Baik:1999} is that
\begin{equation}\label{eq:PIIcov}
    \kappa_q^2(t) \sim 1- \frac{R(y)}{t^{1/3}},
    \qquad  t\to \infty, \quad
    q=[2 t +yt^{1/3}]
\end{equation}
for $y$ in a compact subset of $\mathbb{R}$. (In \cite{Baik:1999},
the notations $v(x)=-R(x)$ and $u(x)=-q(x)$ are used.) Hence
formally, as $ t\to\infty$ with $n=2 t +xt^{1/3}$,
\begin{equation}
    \log\big( e^{-t^2}D_n(t) \big)
    = \sum_{q=n}^\infty \log \big( \kappa_q^2(t) \big)
    \sim t^{1/3} \int_x^\infty
\log\bigg(1-\frac{R(y)}{t^{1/3}}\bigg)
    dy
    \sim -\int_x^\infty R(y)dy = \log F_2(x).
\end{equation}

The first step of this paper is to write, instead
of~\eqref{eq:produp}, \eq\label{eq:proddown} e^{-t^2}D_{n}(t) =
e^{-t^2}\prod_{q=1}^{n}\frac{D_q(t)}{D_{q-1}(t)} = e^{-t^2}
\prod_{q=1}^{n} \frac1{\kappa_{q-1}^{2}(t)}.
\endeq
Here $D_0(t):=1$. Then formally, we expect that as $ t\to\infty$
with $n=2 t +xt^{1/3}$,~\eqref{eq:proddown} converges to an integral
from $-\infty$ to $x$. For this to work, we need the asymptotics of
$\kappa_q(t)$ for the whole range of $q$ and $t$ such that $1\le
q\le 2 t +xt^{1/3}$ as $ t\to\infty$.

It turns out it is more convenient to write,
for an arbitrary fixed $L$, \eq\label{eq:proddown2}
e^{-t^2}D_{n}(t) =
e^{-t^2}D_L(t)\prod_{q=L+1}^{n}\frac{D_q(t)}{D_{q-1}(t)
} = e^{-t^2} D_L(t)\prod_{q=L+1}^{n}
\frac1{\kappa_{q-1}^{2}(t)}.
\endeq
We introduce another fixed large number $M>0$ and write \eq
\label{sum-parts} \log(e^{-t^2}D_{n}) = -t^2 +
\underbrace{\log(D_L)}_\text{\vspace{.2in}exact part} +
\underbrace{\sum_{q=L+1}^{[2 t -Mt^{1/3}-1]}
\log(\kappa_{q-1}^{-2})}_\text{Airy part} + \underbrace{\sum_{q= [2
t -Mt^{1/3}]}^{[2 t +xt^{1/3}]}
\log(\kappa_{q-1}^{-2})}_\text{Painlev\'e part}.
\endeq
Since $L$ and $M$ are arbitrary, we can compute the desired limit by
computing
\begin{equation}
    \lim_{L, M\to\infty} \lim_{ t\to\infty} \log
    (e^{-t^2}D_n(t)), \qquad
    n=[2 t +xt^{1/3}].
\end{equation}
From~\eqref{eq:PIIcov}, the Painlev\'e part converges to a finite
integral of $R(y)$ from $y=-M$ to $y=x$ as $t\to\infty$.
For the Airy part, we need the asymptotics of $\kappa_q(t)$ for
$L+1\le q\le 2 t -Mt^{1/3}$ as $ t\to\infty$ for fixed $L, M>0$. The
paper \cite{Baik:1999} obtains a weak one-sided bound of
$\kappa_q(t)$ for $\epsilon t\le q\le 2 t -Mt^{1/3}$ as $
t\to\infty$, where $\epsilon>0$ is small but fixed. The technical
part of this paper is to compute the leading asymptotics of
$\kappa_q(t)$ in $L+1\le q\le 2 t -Mt^{1/3}$ with proper control of
the errors so that the Airy part converges. The advantage of
introducing $L$ is that we do not need small values of $q$, which
simplifies the analysis. The calculation is carried out in Section
\ref{airy-part}. Finally, for the exact part, the asymptotics of
$D_L(t)$ as $ t\to\infty$ are straightforward using a
steepest-descent method since the size of the determinant is fixed
and only the weight varies. The limit is given in terms of the
Selberg integral for the $L\times L$ Gaussian unitary ensemble, which is
given by a product of Gamma functions, the Barnes G-function. The
asymptotics of the Barnes G-function as $L\to\infty$ are related to
the term $\zeta'(-1)$ (see~\eqref{G-at-large-z} below). The
computation is carried out in Section~\ref{sec-exact-part}.

Now we outline the proof of the formula~\eqref{Edown} for $E(x)$. In
the study of symmetrized random permutations it was proven in
\cite{Baik:2001a, Baik:2001b} that, in a similar double scaling
limit, certain other determinants converge to $F_1(x)$ and $F_4(x)$.
But it was observed in \cite{Baik:2001a, Baik:2001b} that these
determinants can be expressed in terms of $\kappa_q(t)$ and
$\pi_q(0;t)$ for the \emph{same} orthonormal
polynomials~\eqref{orthcondition} above. Hence by using the same
idea for $D_n(t)$, we only need to keep track of $\pi_q(0;t)$ in the
asymptotic analysis of the orthogonal polynomials. See
Section~\ref{E-of-x} below for more details.

This paper is organized as follows. In Section~\ref{sec-exact-part},
the asymptotics of the exact part of~\eqref{sum-parts} are computed.
We compute the asymptotics of the Airy part in Section~\ref{airy-part}.
The proof of~\eqref{Fdown} for $F(x)$ in Theorem~\ref{thm1} is
then given in Section~\ref{sec:Fproof}. The proof of~\eqref{Edown}
for $E(x)$ in Theorem~\ref{thm1} is given in Section~\ref{E-of-x}.

While we were writing up this paper, Alexander Its told us that
there is another way to compute the constant term for $E(x)$ using a
formula in \cite{Baik:2001b}. This idea will be explored in a later
publication together with Its to compute the total integrals of
other Painlev\'e solutions, such as the Ablowitz-Segur solution.

\bigskip

\noindent {\bf Acknowledgments.} The authors would like to thank P.
Deift and A. Its for useful communications. The work of the first
author was supported in part by NSF Grant \# DMS-0457335 and the
Sloan Fellowship.  The second and third authors were partially supported
by NSF Focused Research Group grant \# DMS-0354373.

\section{The exact part}
\label{sec-exact-part}

We compute the exact part of~\eqref{sum-parts}. From
equation~\eqref{Dn}, \eq D_L(t) = \frac{1}{(2\pi)^L L!}\int_{[-\pi,
\pi]^L}e^{2 t \sum_{j=1}^L \cos\theta_j}\prod_{1\leq k<\ell\leq
L}\abs{e^{i\theta_k}-e^{i\theta_\ell}}^2 \prod_{j=1}^L\d\theta_j.
\endeq
Following the standard stationary phase method of restricting each
integral to a small interval $-\e\leq\theta\leq\e$ and expanding
$e^{i\theta}$ and $e^{2 t \cos\theta}$ in Taylor series,  $D_L(t)$
is approximately \eq \frac{1}{(2\pi)^L L!}\int_{[-\e,\e]^L} e^{2 t
L- t \sum_{j=1}^L\theta_j^2}\prod_{1\leq k<\ell\leq L}\left\vert
\theta_k - \theta_\ell \right\vert^2 \prod_{j=1}^L\d\theta_j
\endeq
as $t\to\infty$. By extending the range of integration to
$\mathbb{R}^L$, we obtain \eq \lim_{t\to\infty}D_L(t)\cdot
\left(\frac{e^{2 t L}}{(2\pi)^L}D_L^\text{Herm}(t)\right)^{-1} =
1,
\endeq
where
\eq
\begin{split}
 D_L^\text{Herm}(t) & = \;
\frac{1}{L!}\int_{[-\infty,\infty]^L} e^{- t
\sum_{j=1}^L\theta_j^2}\prod_{1\leq k<\ell\leq n}\left\vert
\theta_k - \theta_\ell \right\vert^2 \prod_{j=1}^L\d\theta_j.
\end{split}
\endeq
This integral is known as a Selberg integral and is computed
explicitly as (see for example, \cite{Mehta:1991-book}, equation
(17.6.7)) \eq\label{DLHerm}
  D_L^\text{Herm}(t) = \; \frac{\pi^{L/2}}{2^{L(L-1)/2} t
  ^{L^2/2}}\prod_{q=0}^{L-1}q!
  = \; \frac{\pi^{L/2}}{2^{L(L-1)/2} t
  ^{L^2/2}} G(L+1),
\endeq
where $G(z)$ denotes the Barnes $G$-function, or double gamma
function.  Some
properties of the Barnes $G$-functions are (see, for example,
\cite{Voros:1987, Choi:2003}) \eq G(z+1)=\Gamma(z)G(z) \endeq \eq
G(1)=G(2)=G(3)=1.
\endeq
\eq \label{G-of-one-half} \log G\left(\frac{1}{2}\right) =
\frac{1}{24}\log 2 - \frac{1}{4}\log\pi + \frac{3}{2}\zeta'(-1),
\quad \log G\left(\frac{3}{2}\right) = \frac{1}{24}\log 2 +
\frac{1}{4}\log\pi + \frac{3}{2}\zeta'(-1),
\endeq
\begin{equation}
  G(n)=1!2!\cdots (n-2)!, \qquad n=2,3,4,\cdots.
\end{equation}
\eq \label{G-at-large-z} \log G(z+1) = \frac{z^2}{2}\log z
-\frac{3}{4}z^2 + \frac{z}{2}\log(2\pi)-\frac{1}{12}\log z +
\zeta'(-1) + O\left(\frac{1}{z^2}\right) \quad \text{ as }
z\rightarrow\infty.
\endeq
Therefore, \eq\label{exact-part-result} \lim_{L\to\infty}
\lim_{t\to\infty}\left(\log(D_L)-\bigg\{2L t -\frac{L^2}{2}\log(2t)
+\left(\frac{L^2}2-\frac1{12}\right)\log L -\frac34L^2
+\zeta'(-1)\bigg\}\right) = 0.
\endeq

\section{The Airy part}
\label{airy-part}

The main result of this section is Lemma~\ref{lem:Airy} which
computes \eq \label{double-scaling}
\lim_{L,M\rightarrow\infty}\lim_{ t\to\infty}\sum_{q=L+1}^{[2 t
-Mt^{1/3}-1]} \log(\kappa_{q-1}^{-2}),
\endeq
the Airy part of \eqref{sum-parts}.  We use the notation
\begin{equation}
    \gamma=\frac{2t}{q}.
\end{equation}

It is well known that the leading coefficients $\kappa_q^{-2}$ of
orthonormal polynomials can be expressed in terms of the solution of
a matrix Riemann-Hilbert Problem (RHP) \cite{FIK}. We start with the
RHP for $m^{(5)}$ defined in Section 6 (p.1156) of \cite{Baik:1999},
which is obtained through a series of explicit transformations of
the original RHP for orthogonal polynomials. For notational ease,
we drop the tildes Let $\theta_c$ be defined such that
$0<\theta_c<\pi$ and $\sin^2\frac{\theta_c}{2} = \frac{1}{\gamma}$.
For $q$ in the regime $L+1\le q\le 2t-Mt^{1/3}-1$, we have
$\gamma>1$. Define the contours $C_1 =
\{e^{i\theta}:\theta_c<|\theta|\leq\pi\}$ and $C_2 =
\{e^{i\theta}:0\leq|\theta|\leq\theta_c\}$ with the orientations
given as in Figure~\ref{contour-Sigma5}. Also define the contours
$C_{\text{in}}$ and $C_{\text{out}}$ as in
Figure~\ref{contour-Sigma5}. Let $\Sigma^{(5)} = C_1 \cup C_2 \cup
C_\text{in} \cup C_\text{out}$. Now let $m^{(5)}(z)=m^{(5)}(z;t,q)$
be the solution to the following RHP: \eq
\begin{cases} m^{(5)} \text{ is analytic in }
z\in\mathbb{C}\backslash\Sigma_5 \\
m^{(5)}_+(z) = m^{(5)}_-(z)v^{(5)}(z) \text{ for } z\in\Sigma_5 \\
m^{(5)} = I \text{ as } z\rightarrow\infty  \end{cases}
\endeq
where the jump matrix $v^{(5)}(z)=v^{(5)}(z;t,q)$ is given by
 \begin{equation}\label{v5}
 v^{(5)}(z)=\begin{cases}\left(\begin{matrix} 0 & 1 \\ -1 & 0\end{matrix}\right) & z\in C_2\\
 \left(\begin{matrix} 1 & e^{-2q\alpha}\\ 0 & 1\end{matrix}\right) &z\in C_1\\
 \left(\begin{matrix} 1 & 0\\ e^{2q\alpha} & 1\end{matrix}\right) & z\in C_\text{in}\cup
 C_\text{out}.
 \end{cases}
 \end{equation}
Here \eq \label{alpha} \alpha(z) =
-\frac{\gamma}{4}\int_\xi^z\frac{s+1}{s^2}\sqrt{(s-\xi)(s-\overline{\xi})}\d
s,
\endeq
where $\xi=e^{i\theta_c}$ and the branch is chosen to be analytic in
$\mathbb{C}\backslash\overline{C_2}$ and
$\sqrt{(s-\xi)(s-\overline{\xi})}\sim +s$ for
$s\rightarrow\infty$.
\begin{figure}[h]
\centering
\psfrag{O}[1][1][1][1]{\begin{large}$\mathcal{O}_{\xi}$\end{large}}
\psfrag{U}[1][1][1][1]{\begin{large}$\mathcal{O}_{\overline{\xi}}$\end{large}}
\psfrag{C1}{$C_1$}
\psfrag{C2}{$C_{2}$}
\psfrag{Cin}{$C_{\text{in}}$}
\psfrag{Cot}{$C_{\text{out}}$}
\psfrag{xi}{$\xi$}
\psfrag{xi2}{$\overline{\xi}$}
\psfrag{c}{$C_1^+$}
\psfrag{d}{$C_1^-$}
\psfrag{m}{$C_{\text{in}}^+$}
\psfrag{n}{$C_{\text{in}}^-$}
\psfrag{o}{$C_{\text{out}}^+$}
\psfrag{l}{$C_{\text{out}}^-$}
\mbox{\subfigure[The contour $\Sigma^{(5)}$ for $m^{(5)}$.]
{\label{contour-Sigma5}\epsfig{figure=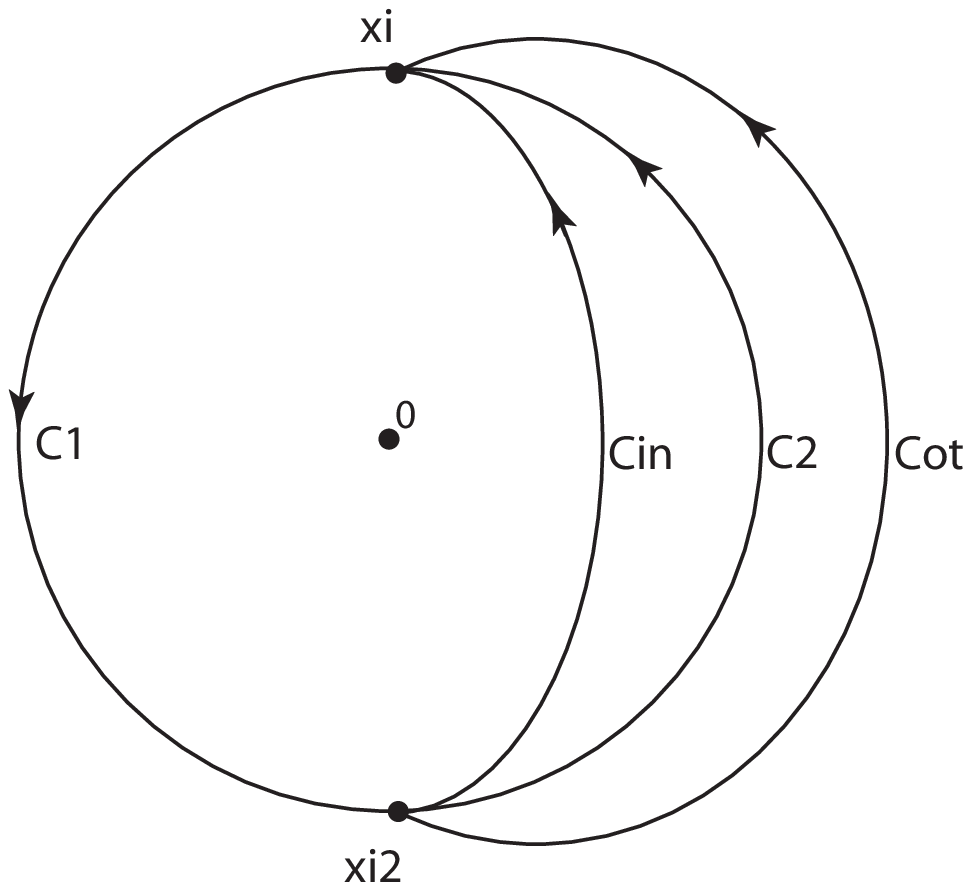, width=2in}} \subfigure[Introduction of $\mathcal{O}_\xi$ and $\mathcal{O}_{\overline{\xi}}.$]
{\epsfig{figure=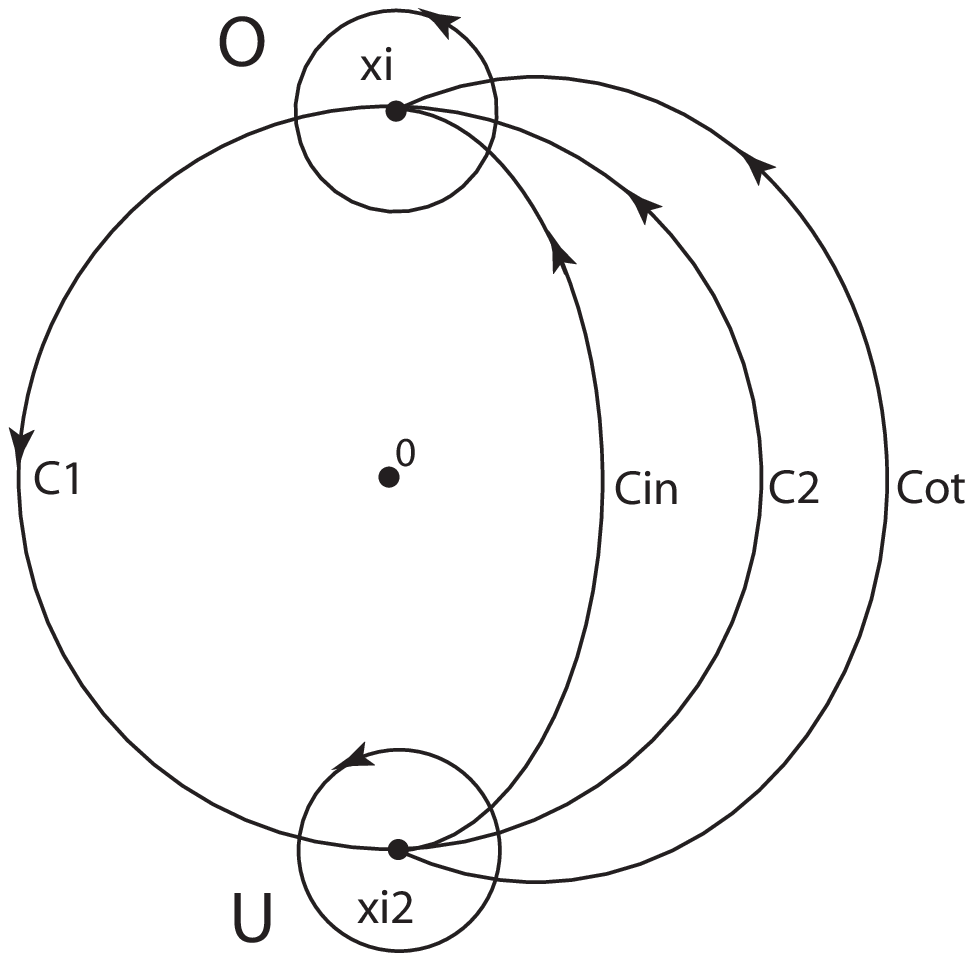,width=2in}} \subfigure[The contour $\Sigma$ for $m^{(R)}$.]{\label{contour-Sigma}\epsfig{figure=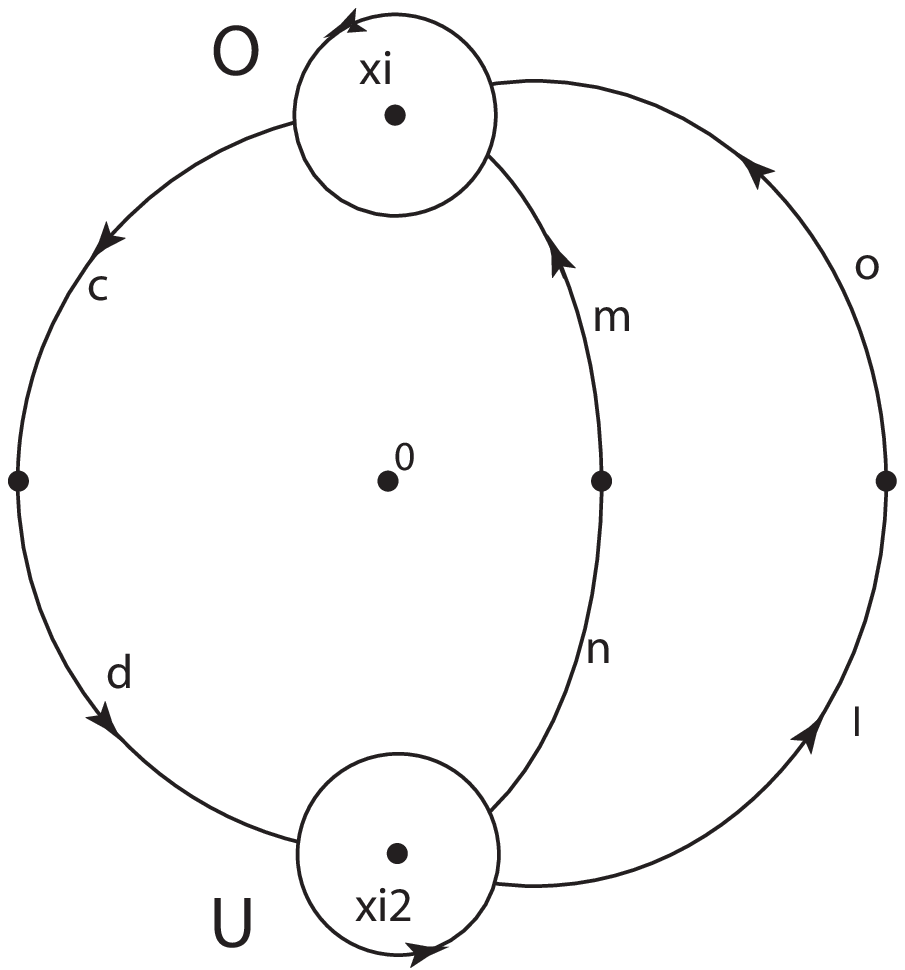,width=2in}}}
\caption{Contours used in the definition of the RHP for $m^{(5)}$ and $m^{(R)}$.}
\end{figure}
Then (see (6.40) of \cite{Baik:1999}) \eq \kappa_{q-1}^2 =
-e^{q(-\gamma+\log\gamma+1)}m_{21}^{(5)}(0)
\endeq
and
\begin{equation}
    \pi_q(0)=(-1)^qm^{(5)}_{11}(0).
\end{equation}

We analyze the solution $m^{(5)}$ to this RHP for the regime $L+1\le
q\le 2t-Mt^{1/3}-1$ as $t\to\infty$. Our analysis builds on the work
of \cite{Baik:1999} and makes two main technical improvements.  The
first is that the paper
\cite{Baik:1999} only considered the regime when $\epsilon t\le q$.
Hence $q$ necessarily grows to infinity. In this work, we allow $q$
to be finite. The second is that we compute a higher order
correction explicitly to the asymptotics obtained in
\cite{Baik:1999}. This higher-order correction contributes to the
sum~\eqref{double-scaling}.  In \cite{Baik:1999}, only a one-sided
bound of a similar sum was obtained.  We merely outline
the analysis for the parts that overlap with the analysis of
\cite{Baik:1999}.

From the construction of $\alpha$ in \cite{Baik:1999} we have that
$\abs{ e^{-\alpha(z)}}<1$ for $z\in C_1$ and $\abs{
e^{\alpha(z)}}<1$ for $z\in C_\text{in}\cup C_\text{out}$.  If we
formally take the limit of our jump matrix $v^{(5)}$ as
$q\rightarrow\infty$ the jumps on the contours $C_\text{in}$ and
$C_\text{out}$ approach the identity matrix and the jumps on $C_1$
and $C_2$ approach constant jumps.  This limiting RHP is solved
explicitly by
\begin{equation}
m^{(5, \infty)}(z)=\begin{pmatrix}
\frac{1}{2}\left(\beta+\beta^{-1}\right)
&\frac{1}{2i}\left(\beta-\beta^{-1}\right)
\\-\frac{1}{2i}\left(\beta-\beta^{-1}\right)
&\frac{1}{2}\left(\beta+\beta^{-1}\right) \end{pmatrix},
\end{equation}
where $\beta(z)=\left(\frac{z-\xi}{z-\xi^{-1}}\right)^{1/4}$, which
is analytic for $z\in\mathbb{C}\setminus\overline{C_2}$ and
$\beta\to 1$ as $z\rightarrow\infty$.  Note that
\begin{equation}
   m^{(5,\infty)}_{21}(0)=-\frac{1}{\sqrt{\gamma}}= -\frac{q}{2t},
   \qquad
   m^{(5,\infty)}_{11}(0)=\sqrt{\frac{\gamma-1}{\gamma}}=\sqrt{\frac{2t-q}{2t}}.
\end{equation}

However, the convergence of the jump matrix $v^{(5)}(z)$ is not
uniform near the points $\xi$ and $\overline{\xi}$, since
$\alpha(\xi)=\alpha(\overline{\xi})=0$.  Therefore a parametrix is
introduced around these points.  For fixed $\delta<\frac{1}{100}$,
define \eq
\mathcal{O}_{\xi}=\{z:|z-\xi|\leq\delta|\xi-\overline{\xi}|\}, \quad
\mathcal{O}_{\overline{\xi}}=\{z:|z-\overline{\xi}|\leq\delta|\xi-\overline{\xi}|\}.
\endeq
Note that the diameter of $\mathcal{O}_{\xi}$ is of order
$\frac{\sqrt{\gamma-1}}{\gamma}$ and varies as $t$ and $q$ vary. The
diameter approaches $0$ as $\gamma\to 1$ or $\gamma\to\infty$,
which happens when $q$ is close to $2t-Mt^{1/3}$ or $L+1$,
respectively. However, the point is that in the regime $L+1\le q\le
2t-Mt^{1/3}-1$, the diameter of $\mathcal{O}_{\xi}$ cannot shrink
``too fast.''  Therefore, the usual Airy parametrix for a domain
of fixed size still yields a good parametrix for the RHP in the regime
under consideration. The case
when $\gamma\to 1$ ``slowly'' was analyzed in \cite{Baik:1999} for the
leading asymptotics of $m^{(5)}$. In this section, we also analyze
the case when $\gamma\to \infty$ ``slowly,'' and also improve the work
in \cite{Baik:1999} to obtain a higher-order correction term.

Orient the boundary of both $\mathcal{O}_{\xi}$ and
$\mathcal{O}_{\overline{\xi}}$ in the counterclockwise direction.
Now as in \cite{Baik:1999} (see also \cite{Deift:1995}) for $z\in
\mathcal{O}_{\xi}\setminus\Sigma^{(5)}$ define the matrix-valued
function $m_p$ as
\begin{equation}\label{mp1}
m_p(z)=\begin{pmatrix}1 & -1 \\ -i & -i \end{pmatrix}\sqrt{\pi}
e^{i\pi/6}q^{\frac{\sigma_3}{6}}\left(\left(\frac{3}{2}\alpha(z)\right)^{\frac{2}{3}}
\left(\frac{z-\overline{\xi}}{z-\xi}\right)\right)^{\frac{\sigma_3}{4}}
\Psi\left(\left(\frac{3}{2}q\alpha(z)\right)^{2/3}\right)e^{q\alpha(z)\sigma_3},
\end{equation}
where $\omega=e^{2\pi i/3}$ and
\begin{equation}\label{eq:Airypara}
\Psi(s)=\begin{cases} \begin{pmatrix} \Ai(s) & \Ai(\omega^2 s)\\\Ai'(s) & \omega^2 \Ai'(\omega^2s)\end{pmatrix}e^{-\frac{\pi i}{6}\sigma_3}, &0 <\arg(s)<\frac{2\pi}{3},\\
\begin{pmatrix} \Ai(s) & \Ai(\omega^2 s)\\\Ai'(s) & \omega^2\Ai'(\omega^2s)\end{pmatrix}e^{-\frac{\pi i}{6}\sigma_3}\begin{pmatrix} 1 & 0 \\ -1 & 1\end{pmatrix}, & \frac{2\pi}{3}<\arg(s)<\pi,\\
\begin{pmatrix} \Ai(s) & -\omega^2 \Ai(\omega s)\\\Ai'(s) & -\Ai'(\omega s)\end{pmatrix}e^{-\frac{\pi i}{6}\sigma_3}\begin{pmatrix}1 & 0 \\ 1 & 1\end{pmatrix}, & \pi<\arg(s)<\frac{4\pi}{3},\\
\begin{pmatrix} \Ai(s) & -\omega^2\Ai(\omega s)\\\Ai'(s) & -\Ai'(\omega s)\end{pmatrix}e^{-\frac{\pi i}{6}\sigma_3}, &
\frac{4\pi}{3}<\arg(s)<2\pi.\end{cases}
 \end{equation}
We can define $m_p(z)=\overline{m_p(\overline{z})}$ for $z\in
\mathcal{O}_{\overline{\xi}}$ by (\ref{mp1}). For $z\notin
\Sigma^{(5)}\cup(\overline{\mathcal{O}_{\xi}\cup\mathcal{O}_{\overline{\xi}}})$,
let $m_p(z)=m^{(5,\infty)}(z)$. It is shown in \cite{Baik:1999} that
$m_p$ then solves a RHP that has the same jump conditions as
$m^{(5)}$ on the contour $C_2$ as well as on
$\Sigma^{(5)}\cap\mathcal{O}$, where we define
$\mathcal{O}=\mathcal{O}_{\xi}\cup\mathcal{O}_{\overline{\xi}}$.

Define  $R(z)= m^{(5)}(z)m_p^{-1}(z)$. Then $R$ solves a RHP on
$\Sigma =
\partial\mathcal{O} \cup ((C_1 \cup C_\text{in}\cup
C_\text{out})\cap \mathcal{O}^c)$ with jump
$v_{R}=m_{p-}v^{(5)}v^{-1}_pm^{-1}_{p-}$.  Explicitly, the jump
matrix $v_R$ is given by
\begin{equation} \label{VR}
v_{R}=I+ \begin{cases} 0, & z\in (\Sigma^{(5)}\cap \mathcal{O})\cup C_2,\\
m^{(5,\infty)}\begin{pmatrix} 0 & e^{-2 q\alpha}\\ 0 & 0 \end{pmatrix} (m^{(5,\infty)})^{-1}, & z\in C_1\cap\mathcal{O}^c,\\
m^{(5,\infty)}\begin{pmatrix} 0 & 0\\e^{2 q\alpha}& 0\end{pmatrix} (m^{(5,\infty)})^{-1}, & z\in (C_\text{in}\cup C_\text{out})\cap\mathcal{O}^c,\\
\frac{1}{q\alpha}v^{q\alpha}_{R}+v_R^E,
& z\in \partial\mathcal{O},
\end{cases}
\end{equation}
where $v^{q\alpha}_R$ is given explicitly in
Lemma~\ref{expansion-of-vR} below, and the matrix $v_R^E$ is defined
as $v_R^E= v_R-I-\frac1{q\alpha} v_R^{q\alpha}$ for
$z\in\partial\mathcal{O}$. Since $m^{(5)}(0) = R(0)m_p(0) =
R(0)m^{(5,\infty)}(0)$, we have $m_{21}^{(5)}(0) =
-\frac{1}{\sqrt{\gamma}}(R_{22}(0)-\sqrt{\gamma-1}R_{21}(0))$ and
$m_{11}^{(5)}(0) =
-\frac{1}{\sqrt{\gamma}}(R_{12}(0)-\sqrt{\gamma-1}R_{11}(0))$.
Therefore
\begin{equation}\label{eq:kappaR}
    \kappa_{q-1}^2= \frac1{\sqrt{\gamma}}e^{q(-\gamma+\log
    \gamma+1)} \big(R_{22}(0)-\sqrt{\gamma-1}R_{21}(0)\big)
\end{equation}
and
\begin{equation}\label{eq:piR}
    \pi_q(0)
    =
    -\frac{(-1)^q}{\sqrt{\gamma}} \big(R_{12}(0)-\sqrt{\gamma-1}R_{11}(0)
    \big).
\end{equation}

In \cite{Baik:1999}, for $z\in
\partial \mathcal{O}$, the jump matrix $v_R$ is approximated by the
identity matrix $I$ and the terms
$\frac{1}{q\alpha}v^{q\alpha}_{R}+v_R^E$ are treated as an error
(for the case when $et\le q$). For our purpose, we need to compute
the contribution from the next order term
$\frac{1}{q\alpha}v^{q\alpha}_{R}$ explicitly.

It will be shown in the following subsections that for any
$\epsilon>0$, there are $L_0$ and $M_0$ such that for fixed $L\ge
L_0$ and $M\ge M_0$, there is $t_0=t_0(L, M)$ such that
$||v_R-I||_{L^\infty(\Sigma)}<\epsilon$ for all $t\ge t_0$ and
$L+1\le q\le 2t-Mt^{1/3}-1$. This was shown in \cite{Baik:1999} for
$\epsilon_1 t \le q\le 2t-Mt^{1/3}-1$. Then we proceed via the
standard Riemann-Hilbert analysis as, for example, in
\cite{Baik:1999}. Let $\cauchy(f)(z) := \frac{1}{2\pi
i}\int_\Sigma\frac{f(s)}{s-z}\d s$ be the Cauchy operator defined
for $z\notin\Sigma$. For $z\in\Sigma$, $\cauchy_-(f)(z)$ is defined
as the nontangential limit of $\cauchy(f)(z')$ as $z'$ approaches
$z$ from the right-hand side of $\Sigma$.  Define the operator
$\cauchy_R(f)=\cauchy_-(f(v_{R}-I))$ for $f\in L^2(\Sigma)$ and the
function $\mu = I + (1-\cauchy_R)^{-1}\cauchy_R I$.  A simple
scaling argument shows that $\cauchy_R$ is a uniformly bounded
operator for $L+1\le q\le 2t-Mt^{1/3}-1$. Since the supremum norm of
$v_R-I$ can be made as small as necessary, we find that for $L$ and
$M$ fixed but chosen large enough, $(1-\cauchy_R)^{-1}$ is a bounded
$L^2$ operator with norm uniformly bounded for $t$ sufficiently
large for all $q$ such that $L+1\le q\le 2t-Mt^{1/3}-1$. By the
theory of Riemann-Hilbert problems, \eq R(z) - I = \frac{1}{2\pi
i}\int_\Sigma\frac{\mu(s)(v_{R}-I)}{s-z}\d s.
\endeq

Define the contours
$\Sigma^{\pm}$ as the part of $\Sigma$ in the upper-half and lower-half planes,
respectively.  That is,
\eq
\Sigma^\pm = \Sigma \cap (\pm\Im(z)>0).
\endeq
Also define
\eq
C_1^\pm = C_1\cap\Sigma^\pm, \quad C_\text{in}^\pm = C_\text{in}\cap\Sigma^\pm, \quad C_\text{out}^\pm = C_\text{out}\cap\Sigma^\pm
\endeq
as shown in figure \ref{contour-Sigma}.
Now, by the Schwartz-reflexivity of $v_R$ (see \cite{Baik:1999}, p. 1159) and
$\mu$,
\eq
\frac{1}{2\pi i}\int_{\Sigma^-}\frac{\mu(s)(v_R(s)-I)}{s}\d s = \overline{\frac{1}{2\pi i}\int_{\Sigma^+}\frac{\mu(s)(v_R(s)-I)}{s}\d s}
\endeq
and therefore
\eq
R(0) - I = \Re\left[\frac{1}{\pi i}\int_{\Sigma^+}\frac{\mu(s)(v_{R}-I)}{s}\d s\right].
\endeq
We write this as
\begin{equation}\label{eq:R1to5}
   R(0)-I = R^{(1)}+R^{(2)}+R^{(3)}+R^{(4)}+R^{(5)}
\end{equation}
where
\eq \notag R^{(1)} = \Re\left[\frac{1}{\pi
i}\int_{\partial\mathcal{O}_\xi}\frac{1}{q\alpha}v_{R}^{q\alpha}(s)\frac{\d
s}{s}\right],
\endeq
\eq R^{(2)} = \Re\left[\frac{1}{\pi
i}\int_{\partial\mathcal{O}_\xi}\mu(s)\cdot v_R^E(s)\frac{\d
s}{s}\right], \quad R^{(3)} = \Re\left[\frac{1}{\pi
i}\int_{C_1^+}\mu(s)(v_{R}(s)-I)\frac{\d s}{s}\right],
\endeq
\eq \notag R^{(4)} = \Re\left[\frac{1}{\pi
i}\int_{C_{\text{in}}^+\cup
C_{\text{out}}^+}\mu(s)(v_{R}(s)-I)\frac{\d s}{s}\right], \quad
R^{(5)} = \Re\left[\frac{1}{\pi
i}\int_{\partial\mathcal{O}_\xi}(\mu(s)-I)\frac{1}{q\alpha}v_{R}^{q\alpha}(s)\frac{\d
s}{s}\right].
\endeq
Hence using~\eqref{eq:kappaR},
\begin{equation}\label{eq:73}
\begin{split}
    &\sum_{q=L+1}^{[2 t -Mt^{1/3}-1]}
    \log(\kappa_{q-1}^{-2})\\
    &=\sum_{q=L+1}^{[2 t -Mt^{1/3}-1]}
    \bigg\{ -q(-\gamma+\log \gamma +1) + \frac12 \log\gamma - \log
    \big(R_{22}(0)-\sqrt{\gamma-1}R_{21}(0)\big) \bigg\}\\
    &= \sum_{q=L+1}^{[2 t -Mt^{1/3}-1]}
    \bigg\{ -q\big(-\frac{2t}{q}+\log\left(\frac{2 t }{q}\right) +1\big) +
    \frac{1}{2}\log\left(\frac{2 t }{q}\right)\\
    &\qquad
    - \log \big(1+R^{(1)}_{22}(0)-\sqrt{\gamma-1}R^{(1)}_{21}(0)\big)
    - \log\bigg( 1+ \sum_{i=2}^5
    \frac{R^{(i)}_{22}(0)-\sqrt{\gamma-1}R^{(i)}_{21}(0)}
    {1+R^{(1)}_{22}(0)-\sqrt{\gamma-1}R^{(1)}_{21}(0)} \bigg)
    \bigg\}.
\end{split}
\end{equation}

\subsection{Calculation of
$\bs{R^{(1)}= \Re\left[\frac{1}{\pi i}
\int_{\partial\mathcal{O}_\xi}\frac{1}{q\alpha}v_{R}^{q\alpha}(s)\frac{\d
s}{s}\right]}$}

First, we compute $v_{R}^{q\alpha}$ explicitly.
\begin{lemma}
\label{expansion-of-vR} For $z\in \partial\mathcal{O}_{\xi}$, the
jump matrix $v_R(z)$ can be written as $v_{R} = I +
\frac{1}{q\alpha}v^{q\alpha}_{R} + v_R^E$ where
\begin{equation}
v^{q\alpha}_{R} = \frac{1}{2}\bpm d_1\beta^2+c_1\beta^{-2} &
i(d_1\beta^2-c_1\beta^{-2}) \\ i(d_1\beta^2-c_1\beta^{-2}) &
-(d_1\beta^2+c_1\beta^{-2})\epm, \qquad c_1 = \frac{5}{72}, \quad
d_1 = -\frac{7}{72}
\end{equation} and
\begin{equation}
    v_R^E=O\left(\frac{1}{|q\alpha|^2}\right).
\end{equation}
\end{lemma}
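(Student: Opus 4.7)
The proof is a direct asymptotic expansion of the local Airy parametrix $m_p$ against the outer model $m^{(5,\infty)}$ on $\partial\mathcal{O}_\xi$. Since $m^{(5)}$ is analytic across $\partial\mathcal{O}_\xi$ (all its jumps lie on $\Sigma^{(5)}$), and since $m_p$ equals $m^{(5,\infty)}$ outside $\mathcal{O}$ but is given by \eqref{mp1} inside, the jump of $R=m^{(5)}m_p^{-1}$ on $\partial\mathcal{O}_\xi$ (oriented counterclockwise) is the purely parametrix-side mismatch
\begin{equation*}
v_R(z)=m^{(5,\infty)}(z)\,m_p(z)^{-1},\qquad z\in\partial\mathcal{O}_\xi,
\end{equation*}
where $m_p(z)$ denotes the interior boundary value. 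So the lemma amounts to expanding this product to two orders in the natural large parameter. The relevant large parameter is $q\alpha(z)$: setting $s=\big(\tfrac{3}{2}q\alpha(z)\big)^{2/3}$ so that the Airy variable $\tfrac{2}{3}s^{3/2}$ equals exactly $q\alpha$, one has $|q\alpha(z)|\gtrsim q\cdot|\alpha'(\xi)|\cdot\delta|\xi-\bar\xi|$ uniformly on $\partial\mathcal{O}_\xi$, which tends to infinity in the regime $L+1\le q\le 2t-Mt^{1/3}-1$ once $L$ is large (this is exactly where we use that $q$ is bounded below by $L$).

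Next I would insert the classical large-argument expansion of the Airy function and its derivative,
\begin{equation*}
\Ai(s)\sim\frac{e^{-q\alpha}}{2\sqrt{\pi}\,s^{1/4}}\sum_{k\ge 0}(-1)^k\frac{u_k}{(q\alpha)^k},\qquad
\Ai'(s)\sim-\frac{s^{1/4}e^{-q\alpha}}{2\sqrt{\pi}}\sum_{k\ge 0}(-1)^k\frac{v_k}{(q\alpha)^k},
\end{equation*}
with $u_0=v_0=1$, $u_1=\tfrac{5}{72}$, $v_1=-\tfrac{7}{72}$, together with the analogous expansions at the rotated arguments $\omega s,\omega^2 s$ that appear in the four sectors of \eqref{eq:Airypara}. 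The point is that the sector-dependent triangular Stokes factors in \eqref{eq:Airypara} combine with the exponential $e^{q\alpha\sigma_3}$ in \eqref{mp1} to make the asymptotics of the bracketed combination $\Psi\big((\tfrac{3}{2}q\alpha)^{2/3}\big)e^{q\alpha\sigma_3}$ uniform in $\arg(q\alpha)$; consequently one may do the algebra in any convenient sector, e.g.\ $0<\arg s<\tfrac{2\pi}{3}$.

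Carrying out the multiplication, the scalar prefactors and the diagonal conjugation $q^{\sigma_3/6}\cdot\big((\tfrac{3}{2}\alpha)^{2/3}\tfrac{z-\bar\xi}{z-\xi}\big)^{\sigma_3/4}$ collapse to $(\tfrac{3}{2}q\alpha)^{\sigma_3/6}\beta^{-\sigma_3}$ (using $\bar\xi=\xi^{-1}$ on the unit circle so that $((z-\bar\xi)/(z-\xi))^{1/4}=\beta^{-1}$); the leading order of $m_p$ is then seen to coincide with $m^{(5,\infty)}$, so $v_R-I$ starts at order $(q\alpha)^{-1}$. Collecting the $k=1$ Airy terms produces, after conjugation by $\beta^{-\sigma_3}$, the matrix
\begin{equation*}
\frac{1}{q\alpha}\,\beta^{-\sigma_3}\!\begin{pmatrix}\ast & \ast\\ \ast & \ast\end{pmatrix}\!\beta^{\sigma_3}
\end{equation*}
whose entries are linear combinations of $u_1=c_1$ and $v_1=d_1$; the conjugation by $\beta^{-\sigma_3}$ turns the off-diagonal entries into multiples of $\beta^{\pm 2}$, yielding exactly the matrix $v_R^{q\alpha}$ stated in the lemma with $c_1=5/72$ and $d_1=-7/72$.

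Finally, the remainder bound $v_R^E=O(1/|q\alpha|^2)$ is inherited from the uniform remainder in the Airy asymptotic (valid in every Stokes sector up to an exponentially small correction) together with the uniform lower bound on $|q\alpha(z)|$ noted above. The hardest part of the argument is really just the bookkeeping of the four Stokes sectors of \eqref{eq:Airypara} and the verification that the prefactor $\beta^{-\sigma_3}$ conjugation produces precisely the combination $d_1\beta^2+c_1\beta^{-2}$ and $i(d_1\beta^2-c_1\beta^{-2})$ recorded in the statement; once this is organized, the rest is routine Airy asymptotics.
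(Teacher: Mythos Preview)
Your proposal is correct and follows essentially the same route as the paper: both identify $v_R$ on $\partial\mathcal{O}_\xi$ as the mismatch $m^{(5,\infty)}m_p^{-1}$, insert the standard large-argument Airy expansions (with $u_1=5/72$, $v_1=-7/72$) into the local parametrix, observe that the sector-dependent Stokes factors in~\eqref{eq:Airypara} lead to the same first two terms in all four regions, and then collapse the diagonal prefactors to $(\tfrac{3}{2}q\alpha)^{\sigma_3/6}\beta^{-\sigma_3}$ to read off $v_R^{q\alpha}$. The paper carries this out by first writing $\Psi^{-1}$ explicitly and expanding it, whereas you phrase it as a conjugation by $\beta^{-\sigma_3}$, but this is the same computation.
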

\begin{proof}
On $\partial\mathcal{O}_\xi$, $v_{R} = m_{p-}m_{p+}^{-1}$. On this
contour $m_{p-}=m^{(5,\infty)}$ and $m_{p+}$ is given by
\eqref{mp1}.  Thus for $z\in\partial \mathcal{O}_{\xi}$ \eq
v_R=m^{(5,\infty)}e^{-q\alpha(z)\sigma_3}
\Psi^{-1}\left(\left(\frac{3}{2}q\alpha(z)\right)^{2/3}\right)
\left(\left(\frac{3}{2}\alpha(z)\right)^{\frac{2}{3}}
\left(\frac{z-\overline{\xi}}{z-\xi}\right)\right)^{-\frac{\sigma_3}{4}}
e^{-i\pi/6}q^{-\frac{\sigma_3}{6}}
\frac{1}{\sqrt{\pi}}\begin{pmatrix}1 & -1 \\ -i & -i
\end{pmatrix}^{-1}\label{vR-boundaryO}.
\endeq

For $0<\arg(s)<\frac{2\pi}{3}$, consider (see~\eqref{eq:Airypara})
\eq
\label{Psi-definition} \Psi(s) = \bpm \Ai(s) & \Ai(\omega^2s) \\
\Ai'(s) & \omega^2\Ai'(\omega^2s)\epm e^{-(i\pi/6)\sigma_3}
\endeq
with $\omega=e^{2i\pi/3}$.  From Abramowitz and Stegun
\cite{Abramowitz:1965-book} (10.4.59) and (10.4.61), for
$|\arg(s)|<\pi$,
\eq
\label{Airy-asymptotics}
\Ai(s) = \frac{e^{-(2/3)s^{3/2}}}{2\sqrt{\pi}s^{1/4}}\left(1-\frac{c_1}{\frac{2}{3}s^{3/2}}+O\left(\frac{1}{|s|^3}\right)\right)
\endeq
\eq
\label{Airy-prime-asymptotics}
\Ai'(s) = -\frac{s^{1/4}e^{-(2/3)s^{3/2}}}{2\sqrt{\pi}}\left(1-\frac{d_1}{\frac{2}{3}s^{3/2}}+O\left(\frac{1}{|s|^3}\right)\right)
\endeq
wherein $c_1=\frac{5}{72}$ and $d_1=-\frac{7}{72}$.
Also note the identity
\eq
\Ai(s)+\omega\Ai(\omega s)+\omega^2\Ai(\omega^2 s) = 0
\endeq
and, from Abramowitz and Stegun (10.4.11.13),
\eq
W[\Ai(s),\Ai(\omega s)] = \frac{1}{2\pi}e^{-i\pi/6}
\endeq
\eq
W[\Ai(s),\Ai(\omega^2 s)] = \frac{1}{2\pi}e^{i\pi/6}
\endeq
\eq
W[\Ai(\omega s),\Ai(\omega^2 s)] = \frac{1}{2\pi}e^{i\pi/2}.
\endeq
Using $\det\Psi(s) = W[\Ai(s),\Ai(\omega^2 s)] = \frac{1}{2\pi}e^{i\pi/6}$,
we have
\eq
\Psi^{-1}(s) = 2\pi\bpm \omega^2\Ai'(\omega^2 s) & -\Ai(\omega^2 s) \\ -Ai'(s)e^
{-i\pi/3} & \Ai(s)e^{-i\pi/3} \epm.\label{Psi-inverse}
\endeq
Using $\frac{2}{3}\lambda(z)^{3/2} = \alpha(z)$, equations
(\ref{Airy-asymptotics}) and (\ref{Airy-prime-asymptotics}) yield
\eq
\Ai(q^{2/3}\lambda(z)) = \frac{e^{-q\alpha}}{2\sqrt{\pi}(q^{2/3}\lambda)^{1/4}}\left(1-\frac{c_1}{q\alpha}+O\left(\frac{1}{|q\alpha|^2}\right)\right)\label{Airy-asymptotics-q}
\endeq

\eq
\Ai'(q^{2/3}\lambda(z)) = -\frac{(q^{2/3}\lambda)^{1/4}e^{-q\alpha}}{2\sqrt{\pi}}\left(1-\frac{d_1}{q\alpha}+O\left(\frac{1}{|q\alpha|^2}\right)\right)\label{Airy-prime-asymptotics-q}
\endeq

\eq
\Ai(\omega^2 q^{2/3}\lambda(z)) = \frac{e^{i\pi/6}e^{q\alpha}}{2\sqrt{\pi}(q^{2/3}\lambda)^{1/4}}\left(1+\frac{c_1}{q\alpha}+O\left(\frac{1}{|q\alpha|^2}\right)\right)\label{Airy-asymptotics-w2q}
\endeq

\eq
\omega^2\Ai'(\omega^2 q^{2/3}\lambda(z)) = -\frac{\omega^2 (q^{2/3}\lambda)^{1/4}e^{q\alpha}}{2\sqrt{\pi}e^{i\pi/6}}\left(1+\frac{d_1}{q\alpha}+O\left(\frac{1}{|q\alpha|^2}\right)\right)\label{Airy-prime-asymptotics-w2q}.
\endeq
We insert the asymptotics  \eqref{Airy-asymptotics-q}-\eqref{Airy-prime-asymptotics-w2q} into \eqref{Psi-inverse} resulting in the asymptotic formulas for  $0<\mathrm{arg}(q^{2/3}\lambda(z))<\pi$ and $q\alpha$ large,
\eq
\label{Psi-inverse-asymptotic}
\Psi^{-1}(q^{2/3}\lambda(z))=\sqrt{\pi}e^{i\pi/6}e^{q\alpha\sigma_3}\left\{\bpm 1 & -1\\-i& -i\epm +\frac{1}{q\alpha}\bpm d_1 & -c_1\\ id_1 & ic_1\epm+O\left(\frac{1}{|q\alpha|^2}\right)\right\} \left(\frac{3}{2}q\alpha\right)^{\sigma_3/6}.
\endeq
It is straightforward to compute an analogous expansion for $\Psi^{-1}(s)$ for the other values of $\arg(q^{2/3}\lambda(z))$ in equation \eqref{Psi-definition}.  To do this one must use the asymptotic formulas \eqref{Airy-asymptotics} and \eqref{Airy-prime-asymptotics} as well as the additional expansions (10.4.60) and (10.4.62) from Abramowitz and Stegun \cite{Abramowitz:1965-book}.  Namely, for $|\arg(s)|<2\pi/3$,
\eq \mathrm{Ai}(-s)=\frac{s^{-1/4}}{\sqrt{\pi}}\left[ \sin\left(\frac{2}{3}s^{3/2}+\frac{\pi}{4}\right)-\frac{3c_1}{2s^{3/2}}\cos\left(\frac{2}{3}s^{3/2}+\frac{\pi}{4}\right)+O\left(\frac{1}{|s|^3}\right)\right]\, ,\label{Airy-asymptotics2}\endeq
\eq \mathrm{Ai}(-s)=-\frac{s^{1/4}}{\sqrt{\pi}}\left[ \cos\left(\frac{2}{3}s^{3/2}+\frac{\pi}{4}\right)-\frac{3d_1}{2s^{3/2}}\sin\left(\frac{2}{3}s^{3/2}+\frac{\pi}{4}\right)+O\left(\frac{1}{|s|^3}\right)\right]\, .
\label{Airy-prime-asymptotics2}\endeq

\noindent  After carrying out this computation, the first two terms
in the expansion are the same in all four regions.  In other words,
\eqref{Psi-inverse-asymptotic} is valid not only for
$0<\mathrm{arg}(q^{2/3}\lambda(z))<2\pi/3$ but for all regions in
the definition of $\Psi$ in \eqref{Psi-definition}. Inserting the
expansion in \eqref{Psi-inverse-asymptotic}, equation
\eqref{vR-boundaryO} reduces to

\eq
v_{R} = I + \frac{1}{2q\alpha}\bpm d_1\beta^2+c_1\beta^{-2} & i(d_1\beta^2-c_1\beta^{-2}) \\ i(d_1\beta^2-c_1\beta^{-2}) & -(d_1\beta^2+c_1\beta^{-2}) \epm + O\left(\frac{1}{|q\alpha|^2}\right),
\endeq
for all $z\in\partial\mathcal{O}_{\xi}$.
\end{proof}

Now we explicitly evaluate $R^{(1)}$.

\begin{lemma}\label{lem:R1explicit}
We have
\begin{equation}
R^{(1)} = \bpm \frac{1}{8q(\gamma-1)} - \frac{1}{24q\gamma} &
\frac{1}{8q(\gamma-1)^{1/2}} - \frac{(\gamma-1)^{1/2}}{24q\gamma} \\
\frac{1}{8q(\gamma-1)^{1/2}} - \frac{(\gamma-1)^{1/2}}{24q\gamma} &
-\frac{1}{8q(\gamma-1)} + \frac{1}{24q\gamma} \epm.
\end{equation}
\end{lemma}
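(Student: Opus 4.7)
The plan is to exploit the meromorphic structure of the integrand and evaluate $R^{(1)}$ by residues inside $\mathcal{O}_\xi$. Although $\alpha(z)$ has a $(z-\xi)^{3/2}$ branch point at $\xi$ and $\beta^{\pm 2}(z)$ have $(z-\xi)^{\pm 1/2}$ branch points, the ratios $\beta^{\pm 2}/\alpha$ are single-valued: using the identity $\beta^{2}\sqrt{(z-\xi)(z-\overline{\xi})}=z-\xi$, one sees that $\alpha$ and $\beta^{\pm 2}$ change sign simultaneously on crossing the branch cut inside $\mathcal{O}_\xi$, so $\beta^{2}/\alpha$ extends meromorphically with a simple pole at $\xi$ and $\beta^{-2}/\alpha$ with a double pole. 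Since $s=\xi$ is the only singularity inside $\mathcal{O}_\xi$, the residue theorem reduces matters to
\begin{equation*}
    R^{(1)} \;=\; 2\,\Re\!\left[\mathrm{Res}_{s=\xi}\frac{v_R^{q\alpha}(s)}{q\alpha(s)\,s}\right].
\end{equation*}
The symmetric shape of the answer (diagonal entries opposite in sign, off-diagonals equal) then follows directly from the structure of $v_R^{q\alpha}$ established in Lemma~\ref{expansion-of-vR}, so the problem reduces to computing the two scalar residues $\mathrm{Res}_{s=\xi}[\beta^{\pm 2}/(q\alpha s)]$.

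I would obtain these by straightforward local Taylor expansion. From the integrand in~\eqref{alpha} one reads off $\alpha(z) = A_\xi(z-\xi)^{3/2}(1+O(z-\xi))$ with $A_\xi = -\gamma(\xi+1)\sqrt{\xi-\overline{\xi}}/(6\xi^{2})$, while $\beta^{\pm 2}$ has the explicit expansion $(z-\xi)^{\pm 1/2}(\xi-\overline{\xi})^{\mp 1/2}\bigl(1 \mp (z-\xi)/(2(\xi-\overline{\xi})) + O((z-\xi)^{2})\bigr)$. For the simple-pole case $\beta^{2}/\alpha$, only these leading coefficients and $1/s|_{s=\xi}=\xi^{-1}$ are needed. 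For the double-pole case $\beta^{-2}/\alpha$, two Taylor orders of both $\alpha$ and $\beta^{-2}$ are required; the subleading coefficient of $\alpha$ brings in $k'(\xi)$ with $k(s):=\sqrt{s-\overline{\xi}}(s+1)/s^{2}$, via the explicit representation $\alpha(z)=-\tfrac{\gamma}{4}(z-\xi)^{3/2}\int_{0}^{1}v^{1/2}k(\xi+v(z-\xi))\,dv$. After collecting, both residues are rational functions of $\xi$.

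The final step is to convert the $\xi$-expressions into rational functions of $\gamma$ using $\xi=e^{i\theta_c}$ with $\sin^{2}(\theta_c/2)=1/\gamma$, via the identities $\xi+1=2\cos(\theta_c/2)e^{i\theta_c/2}$, $\xi-\overline{\xi}=2i\sin\theta_c$, $\sqrt{\gamma}\sin(\theta_c/2)=1$, $\sqrt{\gamma}\cos(\theta_c/2)=\sqrt{\gamma-1}$, and $\cos\theta_c=(\gamma-2)/\gamma$. I expect the main obstacle to be the double-pole contribution: each of its summands produces both real and imaginary parts that must be tracked carefully, and the simple form claimed in the lemma emerges only through cancellations of the type $(\gamma-4)\cos\theta_c+4\sqrt{\gamma-1}\sin\theta_c=\gamma+2$. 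Conceptually, the $1/(\gamma-1)$ pieces of each entry of $R^{(1)}$ come entirely from the simple pole, while the $1/\gamma$ corrections are contributed only by the double-pole refinement; combining them with the prefactors $c_1=5/72$, $d_1=-7/72$ from Lemma~\ref{expansion-of-vR} yields the four entries as stated.
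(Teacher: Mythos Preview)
Your proposal is correct and follows essentially the same route as the paper. The paper likewise observes that $\beta^{\pm 2}/\alpha$ are single-valued in $\mathcal{O}_\xi$, writes $\alpha(z)=\tfrac{2}{3}(z-\xi)^{3/2}G(z)$ with $G$ analytic (your $A_\xi$ is $\tfrac{2}{3}G(\xi)$), evaluates the simple-pole residue $I_1$ and the double-pole residue $I_2$ via $G(\xi)$ and $G'(\xi)$, converts using the same half-angle identities for $\theta_c$, and then combines with $c_1,d_1$ exactly as you outline.
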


\begin{proof}
From Lemma \ref{expansion-of-vR}, it is sufficient to compute the
integrals \eq I_1=\frac{1}{2\pi
i}\int_{\partial\mathcal{O}_\xi}\frac{\beta(s)^2}{\alpha(s)s}\d s
\quad \text{and} \quad I_2=\frac{1}{2\pi
i}\int_{\partial\mathcal{O}_\xi}\frac{1}{\beta(s)^2\alpha(s)s}\d s.
\endeq
We will use the relations $\xi=e^{i\theta_c}$ and
\eq
\label{sin-of-theta-ito-gamma} \sin(\theta_c) =
\frac{2(\gamma-1)^{1/2}}{\gamma}, \quad \cos(\theta_c) =
\frac{\gamma-2}{\gamma}, \quad \sin\left(\frac{\theta_c}{2}\right) =
\frac{1}{\gamma^{1/2}}, \quad \cos\left(\frac{\theta_c}{2}\right) =
\left(\frac{\gamma-1}{\gamma}\right)^{1/2}.
\endeq
Note that $\alpha(z) = \frac{2}{3}(z-\xi)^{3/2}G(z)$ for an analytic
function $G(z)$ in $\mathcal{O}_\xi$ (see the bottom line at p.1157
of \cite{Baik:1999}). Hence by residue calculations, \eq \label{I1}
I_1 = \frac{1}{2\pi
i}\int_{\partial\mathcal{O}_\xi}\frac{3}{2(z-\xi)(z-\overline{\xi})^{1/2}G(z)z}\d
z = \frac{3}{2}\frac{1}{(\xi-\overline{\xi})^{1/2}G(\xi)\xi}
\endeq
and
\eq\begin{split}
I_2 & = \frac{1}{2\pi i}\int_{\partial\mathcal{O}_\xi}\frac{3(z-\overline{\xi})^{1/2}}{2(z-\xi)^2G(z)z}\d z \quad\! = \quad\!  \frac{3}{2}\left(\frac{\d }{\d z}\left.\left[\frac{(z-\overline{\xi})^{1/2}}{G(z)z}\right]\right)\right|_{z=\xi} \\
\label{I2}
    & =  \frac{3}{2}\left[\frac{1}{2(\xi-\overline{\xi})^{1/2}G(\xi)\xi}-\frac{(\xi-\overline{\xi})^{1/2}G'(\xi)}{G(\xi)^2\xi} - \frac{(\xi-\overline{\xi})^{1/2}}{G(\xi)\xi^2}\right].
\end{split}\endeq
But since $\alpha(z) =  \frac{2}{3}(z-\xi)^{3/2}G(z)$ and
$\alpha'(z)=-\frac{\gamma}4\frac{z+1}{z^2}\sqrt{(z-\xi)(z-\overline{\xi})}$,
a straightforward computation yields that \eq \label{Hofxi} G(\xi) =
\lim_{z\rightarrow\xi}\frac{\alpha'(z)}{(z-\xi)^{1/2}} =
-\frac{\gamma}{4}\frac{\xi+1}{\xi^2}(\xi-\overline{\xi})^{1/2}.
\endeq
and \eq\begin{split} G'(\xi) \label{Hprimeofxi}
        & =  \frac{3\gamma}{20}\left(\frac{(\xi+2)(\xi-\overline{\xi})^{1/2}}{\xi^3}-\frac{\xi+1}{2\xi^2(\xi-\overline{\xi})^{1/2}}\right).
\end{split}\endeq
Using~\eqref{sin-of-theta-ito-gamma}, we obtain
\eq
\begin{split}
I_1 &  = \frac{-3}{4(\gamma-1)}+\frac{3}{4(\gamma-1)^{1/2}}i,
\end{split}\endeq
and \eq\begin{split} I_2 & =
\frac{3}{4(\gamma-1)}-\frac{3}{5\gamma}+\left(\frac{3}{4(\gamma-1)^{1/2}}-\frac{3(\gamma-1)^{1/2}}{5\gamma}\right)i.
\end{split}\endeq
Therefore, \eq R^{(1)}_{11} = -R^{(1)}_{22} =
\frac{1}{q}\Re\left[-\frac{7}{72}I_1+\frac{5}{72}I_2\right] =
\frac{1}{8q(\gamma-1)}-\frac{1}{24q\gamma}
\endeq
and \eq R^{(1)}_{12} = R^{(1)}_{21} =
\frac{1}{q}\Im\left[\frac{7}{72}I_1 + \frac{5}{72}I_2\right] =
\frac{1}{8q(\gamma-1)^{1/2}} - \frac{(\gamma-1)^{1/2}}{24q\gamma}.
\endeq
\end{proof}

\subsection{Bound on $\bs{R^{(2)}= \Re\left[\frac{1}{\pi i}\int_{\partial\mathcal{O}_\xi}\mu(s)\cdot O\left(\frac{1}{|q\alpha|^2}\right)\frac{\d s}{s}\right]}$}

We begin by establishing the leading term of $\alpha(z)$ for $z$
near $\xi$.

\begin{lemma} \label{R2-lemma}
For $1\le q<2t$ and for $z$ such that $|z-\xi|\le \min\{\frac12,
|\xi-\overline{\xi}|\}$, \label{alpha-expansion} \eq
\label{alpha-expansion-result}
   \bigg| \alpha(z) -
   \frac{2}{3}(\gamma-1)^{3/4}(z-\xi)^{3/2}e^{-3i\pi/4}e^{-3i\theta_c/2}\bigg|
    \le  \frac{50(\gamma-1)^{3/4}|z-\xi|^{5/2}}{|\xi-\overline{\xi}|}.
\endeq
\end{lemma}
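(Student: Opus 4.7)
The leading-term constant in \eqref{alpha-expansion-result} is precisely $\frac{2}{3}(z-\xi)^{3/2}G(\xi)$: combining \eqref{Hofxi} with $\xi=e^{i\theta_c}$, the identity $\xi+1=2\cos(\theta_c/2)e^{i\theta_c/2}$, and \eqref{sin-of-theta-ito-gamma}, one obtains $G(\xi)=(\gamma-1)^{3/4}e^{-3i\pi/4}e^{-3i\theta_c/2}$. Hence Lemma~\ref{alpha-expansion} is equivalent to an error bound on $\alpha(z)-\tfrac{2}{3}(z-\xi)^{3/2}G(\xi)$.

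Set $\phi(s):=-\frac{\gamma}{4}\frac{s+1}{s^2}(s-\overline{\xi})^{1/2}$ with the branch inherited from the definition of $\sqrt{(s-\xi)(s-\overline{\xi})}$, so that $\alpha'(s)=(s-\xi)^{1/2}\phi(s)$ and $\phi(\xi)=G(\xi)$. Integrating along a suitable path from $\xi$ to $z$ (the straight segment, modulo a small deformation if it meets the branch cut),
\eqnn
\alpha(z)-\tfrac{2}{3}(z-\xi)^{3/2}G(\xi)=\int_\xi^z(s-\xi)^{1/2}\bigl[\phi(s)-\phi(\xi)\bigr]\,ds.
\endeqnn
I would then split $\phi(s)-\phi(\xi)$ by adding and subtracting $\frac{\xi+1}{\xi^2}(s-\overline{\xi})^{1/2}$, producing
\eqnn
\phi(s)-\phi(\xi)=-\tfrac{\gamma}{4}\Bigl[(s-\overline{\xi})^{1/2}\bigl(\tfrac{s+1}{s^2}-\tfrac{\xi+1}{\xi^2}\bigr)+\tfrac{\xi+1}{\xi^2}\bigl((s-\overline{\xi})^{1/2}-(\xi-\overline{\xi})^{1/2}\bigr)\Bigr].
\endeqnn
The first piece is $O(\gamma|\xi-\overline{\xi}|^{1/2}|s-\xi|)$ via the algebraic identity $\tfrac{s+1}{s^2}-\tfrac{\xi+1}{\xi^2}=-(s-\xi)(s\xi+s+\xi)/(s^2\xi^2)$ combined with $|s|\in[\tfrac12,\tfrac32]$ and $|s-\overline{\xi}|\leq 2|\xi-\overline{\xi}|$, both of which follow from $|s-\xi|\leq\min\{\tfrac12,|\xi-\overline{\xi}|\}$.

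For the second piece I would apply the resolvent-type identity $(s-\overline{\xi})^{1/2}-(\xi-\overline{\xi})^{1/2}=(s-\xi)/[(s-\overline{\xi})^{1/2}+(\xi-\overline{\xi})^{1/2}]$ together with $|(\xi+1)/\xi^2|=2\sqrt{(\gamma-1)/\gamma}$. The chief technical obstacle is the lower bound
\eqnn
\bigl|(s-\overline{\xi})^{1/2}+(\xi-\overline{\xi})^{1/2}\bigr|\geq|\xi-\overline{\xi}|^{1/2},
\endeqnn
which I would establish by factoring $(s-\overline{\xi})^{1/2}=(\xi-\overline{\xi})^{1/2}(1+w)^{1/2}$ with $w=(s-\xi)/(\xi-\overline{\xi})$, $|w|\leq 1$, and verifying that the branch inherited from the definition of $\alpha$ is the principal one, which confines $(1+w)^{1/2}$ to the closed right half plane; hence $|1+(1+w)^{1/2}|\geq 1$. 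This yields a bound $O(\sqrt{\gamma(\gamma-1)}|s-\xi|/|\xi-\overline{\xi}|^{1/2})$ on the second piece.

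Finally, multiplying each bound by $|s-\xi|^{1/2}$ and integrating along the segment contributes $O(\gamma|\xi-\overline{\xi}|^{1/2}|z-\xi|^{5/2})$ from the first piece and $O(\sqrt{\gamma(\gamma-1)}|z-\xi|^{5/2}/|\xi-\overline{\xi}|^{1/2})$ from the second. Using $|\xi-\overline{\xi}|=4\sqrt{\gamma-1}/\gamma$, both expressions convert to constant multiples of $(\gamma-1)^{3/4}|z-\xi|^{5/2}/|\xi-\overline{\xi}|$, and a routine check of the explicit constants confirms that the sum stays well under $50(\gamma-1)^{3/4}|z-\xi|^{5/2}/|\xi-\overline{\xi}|$.
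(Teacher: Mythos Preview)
Your argument is correct and is close in spirit to the paper's, but the organization differs. The paper makes the change of variables $s=\xi(1+\epsilon u)$ with $\epsilon=(z-\xi)/\xi$, which turns $\alpha(z)$ into a prefactor (computed to be exactly $(\gamma-1)^{3/4}e^{-3i\pi/4}e^{-3i\theta_c/2}\epsilon^{3/2}$) times $\int_0^1\sqrt{u}\,\Phi(u)\,du$ with
\[
\Phi(u)=\Bigl(1+\tfrac{\xi\epsilon}{\xi-\overline{\xi}}u\Bigr)^{1/2}\,\frac{1+\tfrac{\xi\epsilon}{1+\xi}u}{(1+\epsilon u)^2}.
\]
Then $|\Phi(u)-1|\le 50|\epsilon|/|\xi-\overline{\xi}|$ follows from the elementary bounds $|(1+w)^{1/2}-1|\le|w|$ and $|(1+w)^{-2}-1|\le10|w|$, together with $\tfrac{1}{|1+\xi|}\le\tfrac{2}{|\xi-\overline{\xi}|}$ and $1\le\tfrac{2}{|\xi-\overline{\xi}|}$. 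Your add--subtract decomposition of $\phi(s)-\phi(\xi)$ is the additive counterpart of bounding $\Phi(u)-1$ multiplicatively; the pieces match up one-to-one (your square-root term corresponds to the $(1+w)^{1/2}$ factor, your rational term to the remaining factors). The paper's packaging has the advantage that it never splits $\sqrt{(s-\xi)(s-\overline{\xi})}$ into separate square roots, so the branch bookkeeping you flag---verifying that the inherited branch of $(s-\overline{\xi})^{1/2}$ is principal so that $|1+(1+w)^{1/2}|\ge1$---never arises. Your approach is fine, but the change of variables is worth knowing: it makes the constant-tracking and the branch issues essentially automatic.
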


\begin{proof}
Write $z=\xi(1+\e)$. Then $|\epsilon|=|z-\xi|\le  \min\{\frac12,
|\xi-\overline{\xi}|\}$. Under the change of variables $s=\xi(1+\e
u)$, equation (\ref{alpha}) for $\alpha(z)$ becomes \eq
\label{alpha-new-variable} \alpha(z(\e)) =
   -\frac{\gamma\e^{3/2}(1+\xi)\sqrt{\xi-\overline{\xi}}}{4\sqrt{\xi}}
   \int_0^1 \sqrt{u}\left(1+\frac{\xi\e}{\xi-\overline{\xi}}u\right)^{1/2}
   \frac{1+\frac{\xi\e}{1+\xi}u}{(1+\e u)^2}\d u.
\endeq
Using $\xi=e^{i\theta_c}$ and~\eqref{sin-of-theta-ito-gamma}, we
have
\begin{equation}\label{alpha-expansion-eq1}
\begin{split}
    -\frac{\gamma\e^{3/2}(1+\xi)\sqrt{\xi-\overline{\xi}}}{4\sqrt{\xi}}
    &= (\gamma-1)^{3/4}e^{-3i\pi/4}
    \epsilon^{3/2} =
    (\gamma-1)^{3/4}(z-\xi)^{3/2}e^{-3i\pi/4}e^{-3i\theta_c/2}.
\end{split}
\end{equation}
For the integrand in~\eqref{alpha-new-variable}, using the
inequalities $|(1+w)^{1/2}-1|\le |w|$ for $|w|\le 1$ and
$|(1+w)^{-2}-1|\le 10|w|$ for $|w|\le \frac1{2}$, and using the fact
that $\frac1{|1+\xi|}\le \frac2{|\xi-\overline{\xi}|}$ and $1\le
\frac2{|\xi-\overline{\xi}|}$, we obtain \eq
\label{alpha-expansion-eq2}
 \bigg| \left(1+\frac{\xi\e}{\xi-\overline{\xi}}u\right)^{1/2}
   \frac{1+\frac{\xi\e}{1+\xi}u}{(1+\e u)^2}
   -1\bigg|
    \le \frac{50|\e|}{|\xi-\overline{\xi}|}
   = \frac{50|z-\xi|}{|\xi-\overline{\xi}|}.
\endeq
Therefore, we obtain~\eqref{alpha-expansion-result}.
\end{proof}

\begin{lemma}\label{lem:R2}
For $L+1\le q\le 2t-Mt^{1/3}-1$, there is a constant $c>0$ such
that
\begin{equation}
    |R^{(2)}|\le \frac{c(2t)^2}{q^{3/2}(2t-q)^{5/2}}.
\end{equation}
\end{lemma}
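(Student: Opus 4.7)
The plan is to estimate $|R^{(2)}|$ by bounding the integrand pointwise on $\partial\mathcal{O}_\xi$ and multiplying by the arc length. Three ingredients are needed: (i) a uniform bound on $\mu$ along $\partial\mathcal{O}_\xi$; (ii) the bound $|v_R^E(z)| \le C/|q\alpha(z)|^2$, which is already supplied by the proof of Lemma~\ref{expansion-of-vR}; and (iii) a lower bound on $|\alpha(z)|$ on the contour $\partial\mathcal{O}_\xi$.

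For (iii), I would apply Lemma~\ref{alpha-expansion} with $|z-\xi| = \delta|\xi-\overline{\xi}|$, which gives
\[
|\alpha(z)| \;\ge\; \left(\tfrac{2}{3} - 50\delta\right)\delta^{3/2}(\gamma-1)^{3/4}|\xi-\overline{\xi}|^{3/2},
\]
and for $\delta < 1/100$ this is bounded below by a positive constant times $(\gamma-1)^{3/4}|\xi-\overline{\xi}|^{3/2}$. For (i), the uniform smallness $\|v_R - I\|_{L^\infty(\Sigma)} < \epsilon$ (for $L,M,$ and $t$ chosen large, as noted earlier in the section) makes $(1-\cauchy_R)^{-1}$ a uniformly bounded operator, which via a Neumann-series argument yields control of $\mu$.

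Combining the three bounds with the arc length $2\pi\delta|\xi-\overline{\xi}|$ and the elementary fact that $\min_{s \in \partial\mathcal{O}_\xi}|s| \ge 1/2$, and then substituting $(\gamma-1)^{3/2} = (2t-q)^{3/2}/q^{3/2}$ together with $|\xi-\overline{\xi}|^2 = 4\sin^2\theta_c = 4q(2t-q)/t^2$ from \eqref{sin-of-theta-ito-gamma}, one obtains
\[
|R^{(2)}| \;\le\; \frac{C}{q^2(\gamma-1)^{3/2}|\xi-\overline{\xi}|^2} \;=\; \frac{Ct^2}{q^{3/2}(2t-q)^{5/2}},
\]
which is the stated bound (after absorbing the factor of $4$ into $c$).

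The main obstacle is the pointwise control of $\mu$, since the standard small-norm theory directly yields only the $L^2$ bound $\|\mu - I\|_{L^2(\Sigma)} \le C\|v_R - I\|_{L^2(\Sigma)}$ rather than an $L^\infty$ bound. I would work around this by writing $\mu = I + (\mu - I)$, bounding the $I$-piece by $\|v_R^E\|_{L^\infty(\partial\mathcal{O}_\xi)} \cdot \text{length}(\partial\mathcal{O}_\xi)$ directly, and estimating the $(\mu-I)$-piece by Cauchy--Schwarz as $\|\mu-I\|_{L^2(\Sigma)}\|v_R^E/s\|_{L^2(\partial\mathcal{O}_\xi)} \le C\|v_R-I\|_{L^\infty(\partial\mathcal{O}_\xi)}\|v_R^E\|_{L^\infty(\partial\mathcal{O}_\xi)}\cdot\text{length}(\partial\mathcal{O}_\xi)$. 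Since $|q\alpha|$ is bounded below by a positive constant depending on $L$ and $M$ throughout $L+1\le q\le 2t-Mt^{1/3}-1$, the factor $\|v_R-I\|_{L^\infty(\partial\mathcal{O}_\xi)}$ is bounded and the two pieces produce the same order of estimate.
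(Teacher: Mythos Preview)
Your proposal is correct and follows essentially the same route as the paper: bound $|v_R^E|$ by $C/|q\alpha|^2$, use Lemma~\ref{alpha-expansion} to get the lower bound $|\alpha(z)|\ge c\,\delta^{3/2}(\gamma-1)^{3/4}|\xi-\overline{\xi}|^{3/2}$ on $\partial\mathcal{O}_\xi$, multiply by the arc length $2\pi\delta|\xi-\overline{\xi}|$, and substitute $|\xi-\overline{\xi}|=4(\gamma-1)^{1/2}/\gamma$ to arrive at $c\gamma^2/(q^2(\gamma-1)^{5/2})=(c(2t)^2)/(q^{3/2}(2t-q)^{5/2})$. The only difference is one of rigor: the paper simply asserts that $\mu$ and $1/s$ are bounded on $\partial\mathcal{O}_\xi$, whereas you flag the $L^\infty$ control of $\mu$ as a genuine issue and supply a workaround by splitting $\mu=I+(\mu-I)$ and handling the second piece via Cauchy--Schwarz and the $L^2$ bound $\|\mu-I\|_{L^2(\Sigma)}\le C\|v_R-I\|_{L^2(\Sigma)}$. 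Your observation that $|q\alpha|$ is bounded below uniformly in the regime $L+1\le q\le 2t-Mt^{1/3}-1$ (by a constant depending on $L,M$) is exactly what makes the $(\mu-I)$-piece no worse than the $I$-piece; this is the same mechanism used later in the paper's Lemma~\ref{lem:R5}. One small imprecision: in your Cauchy--Schwarz chain you replace $\|v_R-I\|_{L^2(\Sigma)}$ by $\|v_R-I\|_{L^\infty(\partial\mathcal{O}_\xi)}$ times the length, but $\Sigma$ is the full contour; this is harmless here since the contributions from $C_1,C_{\text{in}},C_{\text{out}}$ are exponentially small, but it is worth stating explicitly.
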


\begin{proof}
On $\partial \mathcal{O}_\xi$,
$|z-\xi|=\delta|\xi-\overline{\xi}|\le
\frac1{40}|\xi-\overline{\xi}|$. Hence from Lemma~\ref{R2-lemma}, we
have
\begin{equation}\label{eq:alphaonb}
|\alpha(z)|\geq
\frac{1}{6}(\gamma-1)^{3/4}|z-\xi|^{3/2}= \frac{\delta^{3/2}}6
(\gamma-1)^{3/4}|\xi-\overline{\xi}|^{3/2} =
\frac{4}3\delta^{3/2}\bigg(\frac{\gamma-1}{\gamma}\bigg)^{3/2}
\end{equation}
for $z\in\partial\mathcal{O}_\xi$. Therefore, as $\mu$ and
$\frac{1}{s}$ are bounded on $\partial\mathcal{O}_\xi$, \eq
\label{bound-on-R2} |R^{(2)}| \leq c'\int_{\partial\mathcal{O}_\xi}
\frac{\gamma^3}{q^2(\gamma-1)^3}|\d s| = \frac{2\pi
c'\gamma^3\delta|\xi-\overline{\xi}|}{q^2(\gamma-1)^3} =
\frac{c\gamma^2}{q^2(\gamma-1)^{5/2}}
\endeq
for some constants $c', c>0$, as $|\xi-\overline{\xi}| =
\frac{4(\gamma-1)^{1/2}}{\gamma}$.

\end{proof}

\subsection{Bound on
$\bs{R^{(3)} = \Re\left[\frac{1}{\pi
i}\int_{C_1^+}\mu(s)(v_{R}(s)-I)\frac{\d s}{s}\right]}$}
\label{evaluating-R3}

Since $\mu(s)$ and $1/s$ are bounded on $C_1^+$, $|R^{(3)}|\leq
c'||v_R-I||_{L^1(C_1^+)}$ for some constant $c'>0$. But on $C_1^+$,
$v_R(z)-I = O(e^{-2q\alpha(z)})$. Hence \eq \label{L1-bound-on-R3}
|R^{(3)}|\leq c||e^{-2q\alpha(z)}||_{L^1(C_1^+)},
\endeq for some constant $c>0$. For $z=e^{i\theta}\in C_+$ (hence
$\theta_c< \theta\le \pi$), using \eq
\sqrt{(e^{i\phi}-\xi)(e^{i\phi}-\overline{\xi})} =
|(e^{i\phi}-e^{i\phi_c})(e^{i\phi}-e^{-i\phi_c})|^{1/2}e^{i(\pi+\phi)/2},
\endeq
we have \begin{equation} \begin{split} \alpha(e^{i\theta}) &=
-\frac{\gamma}{4}\int_{\theta_c}^{\theta}
\frac{1+e^{i\phi}}{e^{2i\phi}}\sqrt{(e^{i\phi}-e^{i\theta_c})(e^{i\phi}-e^{-i\theta_c})}
\cdot ie^{i\phi}\d \phi \\
&= \gamma\int_{\theta_c}^\theta\cos\left(\frac{\phi}{2}\right)
\sin^{1/2}\left(\frac{\phi+\theta_c}{2}\right)\sin^{1/2}\left(\frac{\phi-\theta_c}{2}\right)\d\phi.
\end{split}
\end{equation}
(Recall that $\gamma=\frac{2t}{q}$.) Note that $\alpha(\xi)=0$,
$\alpha(s)$ is real and positive on $C_1^+$, and
$\alpha(e^{i\theta})$ increases as $\theta$ increases.

\begin{lemma}\label{lem:alphaR3}
For $1\le q\le 2t$,
\begin{equation}
    \alpha(e^{i\theta}) \ge \frac1{12\pi}\sqrt{\gamma(\gamma-1)}
    (\theta-\theta_c)^2
\end{equation}
for $\theta_c\le \theta\le\pi$.
\end{lemma}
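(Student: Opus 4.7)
The plan is to transform the integral defining $\alpha(e^{i\theta})$ into a clean one-variable form, extract two complementary lower bounds, and verify that their union covers the full range $\gamma \ge 1$. First I would use the product-to-sum identity
$$\sin^{1/2}\left(\tfrac{\phi+\theta_c}{2}\right)\sin^{1/2}\left(\tfrac{\phi-\theta_c}{2}\right) = \sqrt{\sin^2(\phi/2)-\sin^2(\theta_c/2)}$$
and substitute $v=\sin(\phi/2)$, bringing the integral into the form
$$\alpha(e^{i\theta}) = 2\gamma\int_{a}^{b}\sqrt{v^2-a^2}\,dv,\qquad a=\tfrac{1}{\sqrt\gamma},\ b=\sin(\theta/2).$$

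Next I would derive two lower bounds. For bound (A), I use $\sqrt{v^2-a^2}\ge\sqrt{2a(v-a)}$ (since $v+a\ge 2a$) and integrate to obtain $\alpha \ge \tfrac{4\sqrt 2}{3}\gamma^{3/4}(b-a)^{3/2}$. Concavity of $\cos(\phi/2)$ on $[\theta_c,\pi]$ yields the secant estimate $\cos(\phi/2)\ge\cos(\theta_c/2)(\pi-\phi)/(\pi-\theta_c)$, which after integration gives $b-a\ge\tfrac14\cos(\theta_c/2)(\theta-\theta_c)$, hence
$$\alpha\ge\tfrac{\sqrt 2}{6}\gamma^{3/4}\cos^{3/2}(\theta_c/2)(\theta-\theta_c)^{3/2}.$$
For bound (B), I use the sharper elementary inequality $\sqrt{v^2-a^2}\ge v-a$ (verified by squaring), integrate to get $\alpha\ge\gamma(b-a)^2$, and apply the same secant estimate to conclude
$$\alpha\ge\tfrac{\gamma-1}{16}(\theta-\theta_c)^2.$$

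Finally I compare with the target, noting $\sqrt{\gamma(\gamma-1)}=\gamma\cos(\theta_c/2)$. Bound (B) beats the target iff $\cos(\theta_c/2)\ge 4/(3\pi)$, i.e.\ for $\gamma\ge 9\pi^2/(9\pi^2-16)\approx 1.22$. For the remaining sliver near $\gamma=1$ bound (A) takes over: using Jordan's inequality $\sin x\ge 2x/\pi$ applied to $\sin((\pi-\theta_c)/2)=\cos(\theta_c/2)$ one obtains $\pi-\theta_c\le\pi\cos(\theta_c/2)$, so $(\theta-\theta_c)^{3/2}\ge(\theta-\theta_c)^2/\sqrt{\pi\cos(\theta_c/2)}$, converting (A) into
$$\alpha\ge\tfrac{\sqrt 2}{6\sqrt\pi}\gamma^{3/4}\cos(\theta_c/2)(\theta-\theta_c)^2,$$
which beats the target iff $\gamma\le 64\pi^2\approx 632$. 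Since the two regimes of validity overlap, together they cover $[1,\infty)$.

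The main obstacle is this reconciliation: bound (A) scales as $\gamma^{3/4}$ (too weak for large $\gamma$) while bound (B) scales as $\gamma-1$ (too weak as $\gamma\to 1$), yet the constant $1/(12\pi)$ in the statement is loose enough that their regions of sufficiency overlap. The interesting structural point is that the $(\theta-\theta_c)^{3/2}$ behavior of $\alpha$ near $\theta_c$ is stronger than the claimed quadratic for small argument, and only needs to be traded against the linear-in-$\gamma$ growth at large $\theta-\theta_c$ to get the uniform estimate.
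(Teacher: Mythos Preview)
Your proof is correct and takes a genuinely different route from the paper's. The paper works directly with the trigonometric integrand $\gamma\int_{\theta_c}^\theta\cos(\phi/2)\sin^{1/2}(\tfrac{\phi+\theta_c}{2})\sin^{1/2}(\tfrac{\phi-\theta_c}{2})\,d\phi$ and splits into cases $\theta_c\lessgtr\pi/3$; within the small-$\theta_c$ case it further splits on $\theta\lessgtr 2\pi/3$, and in the large-$\theta_c$ case it applies the reflection $\phi\mapsto\pi-\phi$ before invoking Jordan-type bounds. Your approach is structurally cleaner: the substitution $v=\sin(\phi/2)$ reduces everything to $2\gamma\int_a^b\sqrt{v^2-a^2}\,dv$, after which the two elementary inequalities $\sqrt{v^2-a^2}\ge\sqrt{2a(v-a)}$ and $\sqrt{v^2-a^2}\ge v-a$ produce bounds (A) and (B) in one line each, and the single concavity-based secant estimate for $b-a$ replaces the paper's separate trigonometric inequalities in each sub-case. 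What the paper's approach buys is that each step is a direct, elementary estimate on the original integrand with no preliminary transformation; what yours buys is a unified treatment with fewer sub-cases and a transparent explanation of why two regimes are needed (the $\gamma^{3/4}$ versus $\gamma-1$ scaling). Both arguments ultimately hinge on the same dichotomy between $\gamma$ close to $1$ and $\gamma$ large, and your observation that the generous constant $1/(12\pi)$ makes the two regions of validity overlap is exactly the mechanism the paper exploits (less explicitly) through its choice of thresholds $\pi/3$ and $2\pi/3$.
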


\begin{proof}
We consider two cases separately: $\theta_c\le \frac{\pi}3$ and
$\theta_c\ge \frac{\pi}3$.

Start with the case when $\theta_c\le
\frac{\pi}3$. We consider two sub-cases: $\theta\le \frac{2\pi}3$
and $\theta\ge\frac{2\pi}3$. When $\theta\le \frac{2\pi}3$, $0\le
\frac{\phi}2\le \frac{\pi}3$, $0\le \frac{\phi+\theta_c}2\le
\frac{\pi}2$ and $0\le\frac{\phi-\theta_c}2\le \frac{\pi}3$. Hence
using the basic inequalities $\cos(x)\ge \frac12$ for $0\le x\le
\frac{\pi}3$ and $\sin(x)\ge \frac{2}{\pi}x$ for $0\le x\le
\frac{\pi}2$, we find that
\begin{equation}\label{eq:alpha3mid1}
\begin{split}
    \alpha(e^{i\theta})
    &\ge \frac{\gamma}{2\pi}
    \int_{\theta_c}^\theta
    (\phi+\theta_c)^{1/2}(\phi-\theta_c)^{1/2} d\phi
    \ge \frac{\gamma}{2\pi}
    \int_{\theta_c}^\theta
    (\phi-\theta_c) d\phi = \frac{\gamma}{4\pi}
    (\theta-\theta_c)^2.
\end{split}
\end{equation}
When $\theta\ge\frac{2\pi}3$, from the monotonicity of
$\alpha(e^{i\theta})$ and using~\eqref{eq:alpha3mid1},
\begin{equation}
    \alpha(e^{i\theta})\ge \alpha(e^{\frac{2\pi}3 i})
    \ge \frac{\gamma}{4\pi} \left(\frac{2\pi}3-\theta_c\right)^2.
\end{equation}
For $0\le\theta_c\le \frac{\pi}3$ and $\frac{2\pi}3\le\theta\le
\pi$, we have $\frac{2\pi}3-\theta_c\ge \frac{\pi}3\ge
\frac13(\theta-\theta_c)$. Therefore,
\begin{equation}\label{eq:alpha3mid2}
    \alpha(e^{i\theta})\ge \frac{\gamma}{12\pi} \big(\theta-\theta_c\big)^2.
\end{equation}

For the second case, when $\theta_c\ge \frac{\pi}3$, using the change of
variables $\phi\mapsto \pi-\phi$,
\begin{equation}
    \alpha(e^{i\theta})
    = \gamma\int_{\pi-\theta}^{\pi-\theta_c}
    \sin\left(\frac{\phi}{2}\right)
    \sin^{1/2}\left(\frac{\phi+(\pi-\theta_c)}{2}\right)
    \sin^{1/2}\left(\frac{(\pi-\theta_c)-\phi}{2}\right)\d\phi.
\end{equation}
Note that $0\le \frac{\phi}2\le \frac{\pi}3$, $0\le
\frac{\phi+(\pi-\theta_c)}{2} \le \pi-\theta_c\le \frac{2\pi}3$, and
$0\le \frac{(\pi-\theta_c)-\phi}{2}\le \frac{\pi}3$. Using the basic
inequalities $\sin(x)\ge \frac{3\sqrt{3}}{2\pi}x$ for $0\le x\le
\frac{\pi}3$ and $\sin(x)\ge \frac{3\sqrt{3}}{4\pi}x$ for $0\le x\le
\frac{2\pi}3$, we find that
\begin{equation}
\begin{split}
    \alpha(e^{i\theta})
    &\ge \frac{27 \gamma}{8\sqrt{2}\pi^2} \int_{\pi-\theta}^{\pi-\theta_c}
    \phi (\phi+(\pi-\theta_c))^{1/2}
    ((\pi-\theta_c)-\phi)^{1/2}\d\phi \\
    &\ge \frac{27 \gamma}{8\sqrt{2}\pi^2} \int_{\pi-\theta}^{\pi-\theta_c}
    \phi ((\pi-\theta_c)-\phi)\d\phi \\
    &= \frac{9\gamma}{16\sqrt{2}}
    \big(\pi-\theta_c+2(\pi-\theta)\big) (\theta-\theta_c)^2
    \ge \frac{9\gamma}{16\sqrt{2}\pi^2}
    (\pi-\theta_c) (\theta-\theta_c)^2.
\end{split}
\end{equation}
Since $\sqrt{\frac{1-\gamma}{\gamma}} =
\cos\left(\frac{\theta_c}{2}\right) =
\sin\left(\frac{\pi-\theta_c}{2}\right) \le \frac{\pi-\theta_c}{2}$,
we have
\begin{equation}\label{eq:alpha3mid3}
    \alpha(e^{i\theta}) \ge \frac{9}{8\sqrt{2}\pi^2}
    \sqrt{\gamma(\gamma-1)} (\theta-\theta_c)^2.
\end{equation}
Combining~\eqref{eq:alpha3mid1}, ~\eqref{eq:alpha3mid2},
and~\eqref{eq:alpha3mid3} completes the proof.
\end{proof}

\begin{lemma}\label{R3-does-not-contribute}
For $L+1\le q\le 2t-Mt^{1/3}-1$, there is a constant $c>0$ such
that
\begin{equation}\label{eq:R3est}
    |R^{(3)}|\le \frac{c(2t)^2}{q^{3/2}(2t-q)^{5/2}}.
\end{equation}
\end{lemma}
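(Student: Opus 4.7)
The plan is to convert the bound $|R^{(3)}|\le c\|e^{-2q\alpha}\|_{L^1(C_1^+)}$ from \eqref{L1-bound-on-R3} into the stated polynomial bound. The key observation is that $v_R(s)-I$ vanishes on $C_1\cap\mathcal{O}$ by \eqref{VR}, so the integration is effectively over $C_1^+\cap\mathcal{O}^c$, along which $\theta=\arg(z)$ is bounded away from $\theta_c$ by a definite amount. This lets the quadratic lower bound from Lemma~\ref{lem:alphaR3} be integrated as a Gaussian tail rather than a full Gaussian, producing an extra exponential gain that is exactly what matches the target form.

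I would first compute that if $e^{i\theta_1}$ is the point of $\partial\mathcal{O}_\xi\cap C_1^+$ with smallest argument, then the identities $2\sin((\theta_1-\theta_c)/2)=|e^{i\theta_1}-\xi|=\delta|\xi-\overline{\xi}|$ together with $\sin(\theta_c/2)\cos(\theta_c/2)=\sqrt{\gamma-1}/\gamma$ from \eqref{sin-of-theta-ito-gamma} yield $\theta_1-\theta_c\ge 4\delta\sqrt{\gamma-1}/\gamma$. Applying Lemma~\ref{lem:alphaR3} and the elementary tail inequality $\int_{u_1}^\infty e^{-au^2}\,du\le e^{-au_1^2}/(2au_1)$ with $a=q\sqrt{\gamma(\gamma-1)}/(6\pi)$ and $u_1=\theta_1-\theta_c$, and substituting $q\sqrt{\gamma(\gamma-1)}=\sqrt{2t(2t-q)}$ and $\sqrt{\gamma-1}/\gamma=\sqrt{q(2t-q)}/(2t)$, a direct calculation gives $au_1^2\ge C_1\Lambda$ and $au_1\ge C_2\sqrt{q}(2t-q)/\sqrt{2t}$, where
\[
\Lambda := \frac{q(2t-q)^{3/2}}{(2t)^{3/2}},
\]
so that
\[
|R^{(3)}| \;\le\; \frac{C\sqrt{2t}}{\sqrt{q}\,(2t-q)}\,e^{-C_1\Lambda}.
\]

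The final step is the algebraic identity
\[
\frac{\sqrt{2t}}{\sqrt{q}\,(2t-q)} \;=\; \Lambda\cdot\frac{(2t)^2}{q^{3/2}(2t-q)^{5/2}},
\]
which reduces the claim to the uniform bound $\Lambda e^{-C_1\Lambda}\le(C_1 e)^{-1}$ on $\Lambda\ge 0$. No step in the outline is delicate, but the one point worth flagging is that without restricting the integration to $C_1^+\cap\mathcal{O}^c$, the naive full-Gaussian estimate $\|e^{-2q\alpha}\|_{L^1(C_1^+)}\le C(2t(2t-q))^{-1/4}$ fails to sum over $q\in[L+1,2t-Mt^{1/3}-1]$ to something vanishing as $M\to\infty$; the exponential gain $e^{-C_1\Lambda}$ produced by the tail estimate, together with the fact that $\theta_1-\theta_c$ is of the correct order $\sqrt{\gamma-1}/\gamma$, is exactly what is needed.
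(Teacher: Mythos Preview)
Your proposal is correct and follows essentially the same approach as the paper: both start from \eqref{L1-bound-on-R3}, observe that $C_1^+$ begins at $\partial\mathcal{O}_\xi$ so that $\theta-\theta_c\ge 4\delta\sqrt{\gamma-1}/\gamma$, apply the quadratic lower bound of Lemma~\ref{lem:alphaR3}, and then convert the resulting Gaussian tail integral into the target polynomial bound. The only cosmetic difference is in the tail estimate used: the paper applies the crude inequality $\int_a^\infty e^{-x^2/2}\,dx\le a^{-3}$ directly, while you use the Mills-ratio bound $\int_{u_1}^\infty e^{-au^2}\,du\le e^{-au_1^2}/(2au_1)$ followed by $\Lambda e^{-C_1\Lambda}\le(C_1 e)^{-1}$; both routes land on the same expression.
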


\begin{proof}
Let $e^{i\theta_*}$ be the endpoint of $C_1^+$ on
$\partial{\mathcal{O}_\xi}$. Note that since radius of
$\partial{\mathcal{O}_\xi}$ is $\delta|\xi-\overline{\xi}|=4\delta
\frac{\sqrt{\gamma-1}}{\gamma}$,
\begin{equation}
    \theta_*-\theta_c\ge 4\delta\frac{\sqrt{\gamma-1}}{\gamma}.
\end{equation}
Using Lemma~\ref{lem:alphaR3} and changing variables,
\begin{equation}
\begin{split}
    \|e^{-2q\alpha}\|_{L^1(C_1^+)} \le \int_{\theta_*}^\pi
    e^{-\frac{q\sqrt{\gamma(\gamma-1)}}{6\pi} (\theta-\theta_c)^2} d\theta
    \le \bigg(\frac{3\pi}{q\sqrt{\gamma(\gamma-1)}}\bigg)^{1/2}
    \int_{x_*}^\infty e^{-\frac12 x^2} dx
\end{split}
\end{equation}
where
\begin{equation}
    x_*= \bigg(\frac{q\sqrt{\gamma(\gamma-1)}}{3\pi}\bigg)^{1/2}
    (\theta_*-\theta_c) \ge \frac{4\delta}{\sqrt{3\pi}}
    \sqrt{q}\bigg(\frac{\gamma-1}{\gamma}\bigg)^{3/4}.
\end{equation}
Using the inequality $\int_a^\infty e^{-\frac12 x^2}dx\le
\frac1{a^3}$ for $a>0$,
\begin{equation}\label{eq:131}
\begin{split}
    \|e^{-2q\alpha}\|_{L^1(C_1^+)} \le
    \bigg(\frac{3\pi}{q\sqrt{\gamma(\gamma-1)}}\bigg)^{1/2}\frac1{x_*^3}
    \le
    \frac{9\pi^2}{128\delta^3}\frac{(2t)^2}{q^{3/2}(2t-q)^{5/2}}.
\end{split}
\end{equation}
Hence from~\eqref{L1-bound-on-R3} we obtain~\eqref{eq:R3est}.
\end{proof}

\subsection{Bound on $\bs{R^{(4)}
 = \Re\left[\frac{1}{\pi i}\int_{C_\text{in}^+\cup C_\text{out}^+}\mu(s)(v_{R}(s)-I)\frac{\d s}{s}\right]}$}
\label{evaluating-R4}

As before, since on $C_\text{in}^+\cup C_\text{out}^+$ the functions $\mu(s)$
and $\frac{1}{s}$ are uniformly bounded and $v_R(z)-I =
O(e^{-2q\alpha(z)})$, \eq \label{R4-bound-by-L1} |R^{(4)}|\leq
c'||v_R-I||_{L^1(C_\text{in}^+\cup C_\text{out}^+)} \le c\|
e^{2q\alpha}\|_{L^1(C_\text{in}^+\cup C_\text{out}^+)}
\endeq
for some constants $c', c>0$.

\begin{lemma}\label{R4-does-not-contribute}
For $L+1\le q\le 2t-Mt^{1/3}-1$, there is a constant $c>0$ such
that
\begin{equation}\label{eq:R4est}
    |R^{(4)}|\le \frac{c(2t)^2}{q^{3/2}(2t-q)^{5/2}}.
\end{equation}
\end{lemma}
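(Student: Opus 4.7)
The plan is to mirror the proof of Lemma~\ref{R3-does-not-contribute}. From~(\ref{R4-bound-by-L1}) we already have $|R^{(4)}|\le c\|e^{2q\alpha}\|_{L^1(C_\text{in}^+\cup C_\text{out}^+)}$, and by construction $\Re(\alpha(z))<0$ on $C_\text{in}\cup C_\text{out}$, so the integrand is exponentially small. The only missing ingredient is a quantitative lower bound on $-\Re(\alpha(z))$ along the two contours, paralleling Lemma~\ref{lem:alphaR3}.

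I would first parameterize each of $C_\text{in}^+$ and $C_\text{out}^+$ by arc length $r\ge r_0:=\delta|\xi-\overline{\xi}|=4\delta\sqrt{\gamma-1}/\gamma$ measured from $\xi$. Near $\xi$, Lemma~\ref{alpha-expansion} gives
\[
\alpha(z)\approx \frac{2}{3}(\gamma-1)^{3/4}(z-\xi)^{3/2}e^{-3i\pi/4}e^{-3i\theta_c/2},
\]
so the three steepest-descent rays of $-\Re(\alpha)$ from $\xi$ are mutually separated by angle $2\pi/3$; one is the initial tangent to $C_1$, while the other two are taken to be the initial tangents of $C_\text{in}$ and $C_\text{out}$. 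This yields the local bound $-\Re(\alpha(z))\ge c_1(\gamma-1)^{3/4}r^{3/2}$ in a neighborhood of $\xi$. The next step is to upgrade this to the global quadratic lower bound
\[
-\Re(\alpha(z))\ge c_0\sqrt{\gamma(\gamma-1)}\,r^2\qquad\text{for } z\in C_\text{in}^+\cup C_\text{out}^+,
\]
analogous to Lemma~\ref{lem:alphaR3}. Because $C_\text{in}$ and $C_\text{out}$ are only required to lie in $\{\Re(\alpha)<0\}$, I would choose them to be the actual steepest-descent paths of $\Re(\alpha)$ emanating from $\xi$, on which $-\Re(\alpha)$ is monotonically increasing in $r$. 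The involution $z\mapsto 1/z$, combined with the formula~(\ref{alpha}), provides a symmetry between $C_\text{in}$ and $C_\text{out}$ which reduces the work to a single contour. The quadratic bound then follows by combining the local $r^{3/2}$ estimate near $\xi$ with monotonicity farther out, together with a case split in $\theta_c$ mirroring the one in the proof of Lemma~\ref{lem:alphaR3}.

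With this bound in hand, the endgame is identical to that of Lemma~\ref{R3-does-not-contribute}. The substitution $x=(2qc_0\sqrt{\gamma(\gamma-1)})^{1/2}r$ converts the $L^1$ norm into a Gaussian tail,
\[
\|e^{2q\alpha}\|_{L^1(C_\text{in}^+\cup C_\text{out}^+)}\le \bigg(\frac{C}{q\sqrt{\gamma(\gamma-1)}}\bigg)^{1/2}\int_{x_*}^\infty e^{-x^2/2}\,dx,\qquad x_*\ge c_2\sqrt{q}\bigg(\frac{\gamma-1}{\gamma}\bigg)^{3/4},
\]
where the lower cutoff comes from $r=r_0$. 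Applying the elementary estimate $\int_a^\infty e^{-x^2/2}\,dx\le 1/a^3$ then produces exactly $\frac{c(2t)^2}{q^{3/2}(2t-q)^{5/2}}$, by the same computation that led to~(\ref{eq:131}), which is the desired bound~(\ref{eq:R4est}).

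The main obstacle is the global quadratic bound in the middle step: it must hold uniformly in $q$ throughout the whole range $L+1\le q\le 2t-Mt^{1/3}-1$, which spans the regime where $\gamma$ runs from unboundedly large (at $q=L+1$) down to $1+O(t^{-2/3})$ (near $q=2t-Mt^{1/3}$). Near $\xi$ the sharper local $r^{3/2}$ behavior is dominant, but farther along the contours one needs the full quadratic rate, and matching these two regimes on contours that are not canonically the unit circle is where the technical work lies.
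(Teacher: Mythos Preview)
Your overall strategy---bound $|R^{(4)}|$ by $\|e^{2q\alpha}\|_{L^1}$ and then estimate $-\Re(\alpha)$ along the contours---is the right one, and the endgame you describe would indeed reproduce~(\ref{eq:R4est}) once the quadratic lower bound on $-\Re(\alpha)$ is in hand. But that bound is precisely the part you flag as ``the main obstacle,'' and you do not prove it; so as it stands the proposal has a genuine gap at exactly the point where the work lies.

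The paper takes a rather different route, and in particular never establishes a uniform quadratic bound of the form $-\Re(\alpha(z))\ge c_0\sqrt{\gamma(\gamma-1)}\,r^2$ on $C_\text{in}^+\cup C_\text{out}^+$. Instead it splits the range $L+1\le q\le 2t-Mt^{1/3}-1$ into three regimes of $\gamma=2t/q$: (i)~$\gamma$ close to $1$ (namely $1<\gamma\le 1+\delta_4$), where an $L^1$ estimate of the shape $c_1\int_{c_2\sqrt{\gamma-1}}^\infty e^{-c_3qx^3}dx+c_4e^{-c_5q}$ is quoted directly from~\cite{Baik:1999}; (ii)~$\gamma$ between two fixed constants, where $\mathcal{O}_\xi$ has radius bounded below and a standard steepest-descent argument gives the flat bound $\Re(\alpha)\le -c'$ on the contours; and (iii)~$\gamma$ large (small $\theta_c$), where the paper makes the \emph{specific} choice of $C_\text{in}^+,\,C_\text{out}^+$ as straight segments from $\xi$ to the real axis at angles $\theta_c+\tfrac{7\pi}{6}$ and $\theta_c-\tfrac{\pi}{6}$, and by directly bounding the argument of the integrand in~(\ref{alpha}) obtains the $r^{3/2}$ estimate $-\Re(\alpha)\ge c\big(\tfrac{\gamma-1}{\gamma}\big)^{3/2}\rho^{3/2}$. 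In regimes (ii) and (iii) the paper then simply uses the \emph{minimum} of $-\Re(\alpha)$ on the contour (attained at $\rho=2\delta$) together with the bounded contour length and the inequality $e^{-y}\le C/y^2$, rather than any Gaussian-tail integral.

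So the paper's argument never has to match a local $r^{3/2}$ estimate with a far-field quadratic one; it sidesteps your ``main obstacle'' entirely by accepting a cruder pointwise bound on $e^{2q\alpha}$ tailored to each regime. Your steepest-descent/symmetry idea is attractive, but proving the uniform quadratic bound across all three regimes of $\gamma$---especially near $\gamma\to 1$, where the estimate imported from~\cite{Baik:1999} has a \emph{cubic} rather than quadratic rate in the relevant variable---would likely be at least as much work as the paper's case split, and you have not carried it out.
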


\begin{proof}
Let $\gamma_0=\text{csc}^2 (\frac{\pi}{24})$. Let $\delta_4>0$ be a
small positive number defined on p. 1152 of \cite{Baik:1999}. We
estimate $e^{-2q\alpha}$ in the following three cases separately:
(i) $2t(1+\delta_4)^{-1}\le q\le 2t-Mt^{1/3}+1$, (ii)
$\frac{2t}{\gamma_0} \le q\le 2t(1+\delta_4)^{-1}$ and (iii) $L+1\le
q\le \frac{2t}{\gamma_0}$.

(i) For $2t(1+\delta_4)^{-1} \le q\le 2t-Mt^{1/3}-1$, from (6.37) of
\cite{Baik:1999}, there are constants $c_1, c_2, c_3, c_4, c_5>0$
such that \eq || e^{2q\alpha}||_{L^1(C_\text{in}^+\cup
C_\text{out}^+)} \leq c_1\int_{c_2\sqrt{\gamma-1}}^\infty
e^{-c_3qx^3}\d x + c_4e^{-c_5 q}.
\endeq
Using the change of variables $y=c_3qx^3$,
\begin{equation}
\begin{split}
|| e^{2q\alpha}||_{L^1(C_\text{in}^+\cup C_\text{out}^+)} &\leq
\frac{c_1}{3c_3^{1/3}q^{1/3}}\int_{c_2q(\gamma-1)^{3/2}}^\infty
\frac{e^{-y}}{y^{2/3}}\d y  + c_4e^{-c_5 q} \\
&\le \frac{c_1}{3c_2^2c_3q(\gamma-1)}e^{-c_2^3c_3q(\gamma-1)^{3/2}}
+ c_4e^{-c_5 q}\\
&\le \frac{c_1}{3c_2^5c_3^2q^2(\gamma-1)^{5/2}} +
\frac{c_4}{c_5^2q^2}.
\end{split}
\end{equation}
Since $\gamma\ge 1$ and $\gamma-1\le 1+\delta_4$, we find that
\begin{equation}
\begin{split}
    || e^{2q\alpha}||_{L^1(C_\text{in}^+\cup C_\text{out}^+)}
    \le \frac{c\gamma^2}{q^2(\gamma-1)^{5/2}} = \frac{c(2t)^2}{q^{3/2}(2t-q)^{5/2}}
\end{split}
\end{equation}
for a constant $c>0$.

(ii) For $\frac{2t}{\gamma_0}\le q\le 2t(1+\delta_4)^{-1}$, note
that the radius of $\mathcal{O}_\xi$ is of $O(1)$. Hence a standard
calculation in Riemann-Hilbert steepest-descent analysis shows that
for $z\in C^+_{\text{in}}\cap C^+_{\text{out}}$, $\Re (\alpha(z))\le
-c'$ for some constant $c'>0$. Since the length of
$L^1(C_\text{in}^+\cup C_\text{out}^+)$ is bounded,
\begin{equation}\label{eq:R3middle}
    || e^{2q\alpha}||_{L^1(C_\text{in}^+\cup C_\text{out}^+)}
    \le c''e^{-2c'q}
    \le \frac{c''}{(2c')^2q^2}
    \le \frac{c'''\gamma^2}{q^2(\gamma-1)^{5/2}} = \frac{c'''(2t)^2}{q^{3/2}(2t-q)^{5/2}}
\end{equation}
for some constants $c'', c'''>0$.

(iii) Consider the case when $L+1\le q\le \frac{2t}{\gamma_0}$. Then
$0\le \theta_c\le \frac{\pi}{12}$. In this case, we make a specific
choice of $C^+_{\text{in}}$ and $C^+_{\text{out}}$:
\begin{equation}
\begin{split}
    C^+_{\text{in}} &= \bigg\{ \xi+\rho \sin\theta_c e^{i(\theta_c+\frac{7}6\pi)} :
    2\delta \le \rho \le \frac{1}{-\sin(\theta_c+\frac{7}6\pi)}
    \bigg\} \\
    C^+_{\text{out}} &= \bigg\{ \xi+\rho \sin_c e^{i(\theta_c-\frac{1}6\pi)} :
    2\delta\le \rho \le \frac{1}{-\sin(\theta_c-\frac{1}6\pi)}
    \bigg\}.
\end{split}
\end{equation}
The contours $C^+_{\text{in}}$ are straight line segments from $\xi$
to a point on the positive real axis. (Recall that $\mathcal{O}_\xi$
has the radius $\delta|\xi-\overline{\xi}|=2\delta \sin\theta_c$.)
Now we estimate $\Re(\alpha(z))$ for $z\in C^+_{\text{in}}\cup
C^+_{\text{out}}$. For $z\in C^+_{\text{in}}$, take the contour
in~\eqref{alpha} to be the straight line from $\xi$ to $z$. Then
one can check from the geometry that
\begin{equation}
\begin{split}
    &0\le \arg(1+s)\le \theta_c, \qquad 0\le \arg(s)\le \theta_c,
    \qquad \arg(s-\xi)= \theta_c+\frac{7\pi}6\\
    &\frac{\pi}2\le \arg(s-\overline{\xi}) \le \frac{5\pi}6-\theta_c,
    \qquad \arg(ds)= \theta_c+\frac{2\pi}6.
\end{split}
\end{equation}
Therefore the argument of the integrand in~\eqref{alpha} is in
$[2\pi, 2\pi+\frac{\pi}6+2\theta_c]\subset [2\pi, 2\pi+
\frac{\pi}3]$ since $0\le \theta_c\le \frac{\pi}{12}$. Thus, the
cosine of the argument is greater than or equal to
$\cos(\frac{\pi}3)=\frac12$. Therefore, for $z= \xi+\rho
\sin\theta_c e^{i(\theta_c+\frac76\pi)}\in C^+_{\text{in}}$, by
the change of variables $s=\xi+y \sin\theta_c
e^{i(\theta_c+\frac{7}6\pi)}$,
\begin{equation}
\begin{split}
    \Re (\alpha(z))
    &\le -\frac{\gamma}8 \int_{\xi}^z
    \bigg|\frac{1+s}{s^2} \sqrt{(s-\xi)(s-\overline{\xi})}\bigg|
    |ds|\\
    &\le
    -\frac{\gamma\sin^2\theta_c\cos(\frac{\theta_c}2)}{2\sqrt{2}}\int_0^\rho
    \sqrt{y} \frac{|1+\frac{y\sin\theta_c}{2\cos(\frac{\theta_2}2)}e^{i(\frac{\theta_c}2+\frac{7\pi}6)}|}
    {|1+y\sin\theta_ce^{i\frac{7\pi}6}|^2}
    \big|1-i\frac12 ye^{i(\theta_c+\frac{7\pi}6)}\big|^{1/2}dy.
\end{split}
\end{equation}
Using the inequality $|1+xe^{i\phi}|\ge |\sin\phi|$ for all $x\in
\mathbb{R}$, and using $0\le \theta_c\le \frac{\pi}{12}$ and $|y|\le
\frac1{-\sin(\theta_c+\frac{7\pi}6)}\le 2$, we have
\begin{equation}\label{eq:R3L1}
\begin{split}
    \Re (\alpha(z))
    &\le
    -\frac{\gamma\sin^2\theta_c\cos(\frac{\theta_c}2)}{36\sqrt{2}}\int_0^\rho
    \sqrt{y} dy\\
    &= -\frac{\sqrt{2}}{27}\bigg(\frac{\gamma-1}{\gamma}\bigg)^{3/2}
    \rho^{3/2}\\
    &\le
    -\frac{\sqrt{2}}{27}\bigg(\frac{\gamma-1}{\gamma}\bigg)^{3/2}
    (2\delta)^{3/2}
\end{split}
\end{equation}
for $z\in C^+_{\text{in}}$. For $z\in C^+_{\text{out}}$, taking the
contour in~\eqref{alpha} to be the straight line from $\xi$ to $z$,
we can check that
\begin{equation}
\begin{split}
    &0\le \arg(1+s)\le \theta_c, \qquad 0\le \arg(s)\le \theta_c,
    \qquad \arg(s-\xi)= \theta_c-\frac{\pi}6\\
    &\frac{\pi}6-\theta_c \le \arg(s-\overline{\xi}) \le \frac{\pi}2,
    \qquad \arg(ds)= \theta_c-\frac{\pi}6.
\end{split}
\end{equation}
Hence the argument of the integrand in~\eqref{alpha} is in
$[-\theta_c-\frac{\pi}6, 2\theta_c]\subset [-\frac{\pi}4,
\frac{\pi}6]$. Therefore, for $z= \xi+\rho \sin\theta_c
e^{i(\theta_c-\frac16\pi)}\in C^+_{\text{out}}$,
\begin{equation}\label{eq:R3L2}
\begin{split}
    \Re (\alpha(z))
    &\le -\frac{\gamma}{4\sqrt{2}} \int_{\xi}^z
    \bigg|\frac{1+s}{s^2} \sqrt{(s-\xi)(s-\overline{\xi})}\bigg|
    |ds|\\
    &\le
    -\frac{\gamma\sin^2\theta_c\cos(\frac{\theta_c}2)}{2}\int_0^\rho
    \sqrt{y} \frac{|1+\frac{y\sin\theta_c}{2\cos(\frac{\theta_2}2)}e^{i(\frac{\theta_c}2-\frac{\pi}6)}|}
    {|1+y\sin\theta_ce^{-i\frac{\pi}6}|^2}
    \big|1-i\frac12 ye^{i(\theta_c-\frac{\pi}6)}\big|^{1/2}dy \\
    &\le -\frac1{27\sqrt{2}}
    \bigg(\frac{\gamma-1}{\gamma}\bigg)^{3/2} \rho^{3/2} \\
    &\le -\frac1{27\sqrt{2}}
    \bigg(\frac{\gamma-1}{\gamma}\bigg)^{3/2} (2\delta)^{3/2}.
\end{split}
\end{equation}
From~\eqref{eq:R3L1} and~\eqref{eq:R3L2}, arguing as
in~\eqref{eq:R3middle}, we obtain
\begin{equation}\label{eq:R4middle}
    || e^{2q\alpha}||_{L^1(C_\text{in}^+\cup C_\text{out}^+)}
    \le \frac{c''(2t)^2}{q^{3/2}(2t-q)^{5/2}}
\end{equation}
for a constant $c''>0$. Hence we obtain the estimate for
$|R^{(4)}|$.
\end{proof}

\subsection{Bound on $\bs{R^{(5)} = \Re\left[\frac{1}{\pi i}\int_{\partial\mathcal{O}_\xi}(\mu(s)-I)\frac{1}{q\alpha}v_{R}^{q\alpha}\frac{\d s}{s}\right]}$}

\begin{lemma}\label{lem:R5}
For $L+1\le q\le 2t-Mt^{1/3}-1$, there is a constant $c>0$ such
that
\begin{equation}\label{eq:R5est}
    |R^{(5)}|\le \frac{c(2t)^2}{q^{3/2}(2t-q)^{5/2}}.
\end{equation}
\end{lemma}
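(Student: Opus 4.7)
The plan is to apply Cauchy--Schwarz on $\partial\mathcal{O}_\xi$ to decouple the unknown factor $\mu-I$ from the explicit factor $\frac{1}{q\alpha}v_R^{q\alpha}$, and then control $\|\mu-I\|_{L^2(\Sigma)}$ via the standard Riemann--Hilbert estimate $\|\mu-I\|_{L^2(\Sigma)} \le c\|v_R-I\|_{L^2(\Sigma)}$. Since $(1-\cauchy_R)^{-1}$ is a uniformly bounded $L^2$ operator for $L+1\le q\le 2t-Mt^{1/3}-1$ (as established at the start of Section~\ref{airy-part}) and since $|s| \ge 1 - 2\delta > \tfrac{1}{2}$ on $\partial\mathcal{O}_\xi$, Cauchy--Schwarz yields
\eqnn
|R^{(5)}| \le c\,\|v_R-I\|_{L^2(\Sigma)} \cdot \Big\|\tfrac{1}{q\alpha}v_R^{q\alpha}\Big\|_{L^2(\partial\mathcal{O}_\xi)}.
\endeqnn

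For the second norm, I observe that $|\beta|^{\pm 2}$ is uniformly bounded on $\partial\mathcal{O}_\xi$ (since $|z-\xi|/|z-\xi^{-1}|$ lies in $[\delta/(1+\delta),\delta/(1-\delta)]$ for $z$ on this circle), so the matrix $v_R^{q\alpha}$ from Lemma~\ref{expansion-of-vR} is bounded there by a constant depending only on $\delta$. Combining this with the lower bound $|\alpha|\ge \tfrac{4}{3}\delta^{3/2}(\tfrac{\gamma-1}{\gamma})^{3/2}$ recorded in~\eqref{eq:alphaonb} and the length $|\partial\mathcal{O}_\xi|= \tfrac{8\pi\delta(\gamma-1)^{1/2}}{\gamma}$ (from~\eqref{sin-of-theta-ito-gamma}), I obtain
\eqnn
\Big\|\tfrac{1}{q\alpha}v_R^{q\alpha}\Big\|_{L^2(\partial\mathcal{O}_\xi)} \le \frac{c\gamma}{q(\gamma-1)^{5/4}}.
\endeqnn

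For the first norm I split $\Sigma$ into its pieces. The same length and lower-bound data, together with $|v_R-I|\le C/(q|\alpha|)$ from Lemma~\ref{expansion-of-vR}, yield $\|v_R-I\|_{L^2(\partial\mathcal{O}_\xi)}\le \tfrac{c\gamma}{q(\gamma-1)^{5/4}}$. On $C_1^+$, where $|v_R-I|=O(e^{-2q\alpha})$, I reuse the Gaussian change of variables from the proof of Lemma~\ref{R3-does-not-contribute}, now applied to $e^{-4q\alpha}=|v_R-I|^2$, to obtain an $L^1$ bound of the same shape $\tfrac{c(2t)^2}{q^{3/2}(2t-q)^{5/2}}$; taking square roots gives $\|v_R-I\|_{L^2(C_1^+)} \le \tfrac{c\gamma}{q(\gamma-1)^{5/4}}$. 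The contours $C_\text{in}^+\cup C_\text{out}^+$ are treated identically, adapting the three-case analysis of Lemma~\ref{R4-does-not-contribute}. Putting it all together, $\|v_R-I\|_{L^2(\Sigma)}\le \tfrac{c\gamma}{q(\gamma-1)^{5/4}}$, and multiplying the two factors produces $|R^{(5)}|\le \tfrac{c\gamma^2}{q^2(\gamma-1)^{5/2}}=\tfrac{c(2t)^2}{q^{3/2}(2t-q)^{5/2}}$.

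The main obstacle, as throughout this section, is maintaining uniform constants across both endpoints of the $q$-range: near $q=L+1$ one has $\gamma\to\infty$ and $|\partial\mathcal{O}_\xi|\to 0$, while near $q=2t-Mt^{1/3}$ one has $\gamma\to 1^+$ and $|\alpha|_{\min}\to 0$. The crucial enabler is the uniform operator bound on $(1-\cauchy_R)^{-1}$; once this is in hand, the rest is careful bookkeeping of $\gamma$ and $\gamma-1$ exponents, with the key point being that the trade-off $|\alpha|_{\min}^{-1}\cdot |\partial\mathcal{O}_\xi|^{1/2}$ on $\partial\mathcal{O}_\xi$ happens to match the Gaussian $L^2$ decay on the outer arcs, yielding the same exponent $5/4$ on $(\gamma-1)$ on each piece of $\Sigma$.
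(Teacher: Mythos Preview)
Your argument is correct and matches the paper's proof almost line by line: Cauchy--Schwarz on $\partial\mathcal{O}_\xi$, the bound $\|\mu-I\|_{L^2(\Sigma)}\le c\|v_R-I\|_{L^2(\Sigma)}$ via uniform boundedness of $(1-\cauchy_R)^{-1}$, the same $\beta$- and $\alpha$-bounds on $\partial\mathcal{O}_\xi$, and the same piecewise estimate of $\|v_R-I\|_{L^2(\Sigma)}$. The only cosmetic difference is on $C_1^+\cup C_\text{in}^+\cup C_\text{out}^+$: the paper observes $|e^{\pm 2q\alpha}|\le 1$ there so that $\|e^{\pm 2q\alpha}\|_{L^2}^2\le \|e^{\pm 2q\alpha}\|_{L^1}$ and quotes the $L^1$ bounds~\eqref{eq:131} and~\eqref{eq:R4middle} directly, whereas you redo the Gaussian estimate for $e^{\mp 4q\alpha}$; both routes give the same exponent.
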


\begin{proof}
As $\mu-I = (1-\cauchy_R)^{-1}\cauchy_RI$, and as $(1-\cauchy)^{-1}$
and $\cauchy_-$ are uniformly bounded, \eq ||\mu-I||_{L^2(\Sigma)}\leq
c_0||\cauchy_RI||_{L^2(\Sigma)} =
c_0||\cauchy_-(v_{R}-I)||_{L^2(\Sigma)}\leq
c_1||v_{R}-I||_{L^2(\Sigma)}
\endeq
for some constants $c_0, c_1>0$ when $t$ is large enough.  Below, we
assume that $t$ is large enough so that the above estimate
holds. Now
\begin{equation} \begin{split}
|R^{(5)}|&\leq
\int_{\partial\mathcal{O}_\xi}\left|(\mu(s)-I)\frac{v_{R}^{q\alpha}}{q\alpha}
\right| \frac{|\d s|}{|s|}\\
&\le 2 ||\mu-I||_{L^2(\partial\mathcal{O}_\xi)}
\left|\left|\frac{v_{R}^{q\alpha}}{q\alpha}\right|\right|_{L^2(\partial\mathcal{O}_\xi)}\\
&\leq
2c_1||v_R-I||_{L^2(\Sigma)}\left|\left|\frac{v_{R}^{q\alpha}}{q\alpha}\right|\right|_{L^2(\partial\mathcal{O}_\xi)}.
\end{split}
\end{equation}

Since $\beta(z)=\big(\frac{z-\xi}{z-\overline{\xi}}\big)^{1/4}$ is
bounded above and below for $z\in \mathcal{O}_\xi$,
$v_R^{q\alpha}(z)$ in Lemma~\ref{expansion-of-vR} is bounded.
Using~\eqref{eq:alphaonb} and the fact that the radius of
$\mathcal{O}_\xi$ is $\delta|\xi-\overline{\xi}|=
\frac{4\delta\sqrt{\gamma-1}}{\gamma}$, we have
\begin{equation}\label{eq:148}
    \left|\left|\frac{v_{R}^{q\alpha}}{q\alpha}\right|\right|_{L^2(\partial\mathcal{O}_\xi)}^2
    \le \frac{c_2\gamma^2}{q^2(\gamma-1)^{5/2}}
    = \frac{c_2(2t)^{2}}{q^{3/2}(2t-q)^{5/2}}
\end{equation}
for a constant $c_2>0$.

Write $||v_R-I||^2_{L^2(\Sigma)} =
||v_R-I||^2_{L^2(\partial\mathcal{O})} +
||v_R-I||^2_{L^2(\Sigma\backslash\partial\mathcal{O})}$. As
$v_R-I=\frac{v^{q\alpha}_R}{q\alpha}+v_R^E$ in
Lemma~\ref{expansion-of-vR} is bounded by $\frac{c_3}{|q\alpha|}$
for a constant $c_3>0$, we have, as in~\eqref{eq:148},
\begin{equation}\label{eq:149}
    ||v_R-I||^2_{L^2(\partial\mathcal{O})}
    = 2||v_R-I||^2_{L^2(\partial\mathcal{O})_{\xi}}
    \le \frac{c_4(2t)^{2}}{q^{3/2}(2t-q)^{5/2}}
\end{equation}
for a constant $c_4>0$. On the other hand,
\begin{equation}
\begin{split}
    ||v_R-I||^2_{L^2(\Sigma\setminus\partial\mathcal{O})}
    &= 2 ||v_R-I||^2_{L^2(C^+_1)} + 2 ||v_R-I||^2_{L^2(C^+_\text{in}\cup
    C^+_{\text{out}})}\\
    &\le c_3\big( ||e^{-2q\alpha}||_{L^2(C^+_1)}
    + ||e^{2q\alpha}||_{L^2(C^+_\text{in}\cup
    C^+_{\text{out}})} \big) \\
    & \le c_3\big( ||e^{-2q\alpha}||_{L^1(C^+_1)}
    + ||e^{2q\alpha}||_{L^1(C^+_\text{in}\cup
    C^+_{\text{out}})} \big)
\end{split}
\end{equation}
for a constant $c_3>0$, since $e^{-2q\alpha}\le 1$ for $z\in
C^+_1$ and $e^{2q\alpha}\le 1$ for $z\in C^+_\text{in}\cup
C^+_{\text{out}}$. Hence from~\eqref{eq:131}
and~\eqref{eq:R3middle}, we have
\begin{equation}\label{eq:151}
\begin{split}
    ||v_R-I||^2_{L^2(\Sigma\setminus\partial\mathcal{O})}
    \le \frac{c_4(2t)^{2}}{q^{3/2}(2t-q)^{5/2}}
\end{split}
\end{equation}
for a constant $c_4>0$. By
combining~\eqref{eq:148},~\eqref{eq:149}, and~\eqref{eq:151}, we
obtain~\eqref{eq:R5est}.
\end{proof}

\subsection{The Airy part}

From
Lemmas~\ref{lem:R2},~\ref{R3-does-not-contribute},~\ref{R4-does-not-contribute},
and~\ref{lem:R5}, we find that, for $L+1\le q\le [2t-Mt^{1/3}-1]$,
there is a constant $c>0$ such that
\begin{equation}\label{eq:Rrestall}
    \bigg|\sum_{i=2}^5 (R^{(i)}_{22}(0)-\sqrt{\gamma-1}R^{(i)}_{21}(0))
    \bigg| \le
    \frac{c(2t)^{2}}{q^{3/2}(2t-q)^{5/2}} +
    \frac{c(2t)^2}{q^2(2t-q)^2}.
\end{equation}

We need the following result.
\begin{lemma}\label{lem:sumqt}
We have
\begin{equation}
    \lim_{M\rightarrow\infty}\limsup_{ t\to\infty}
    \sum_{q=L+1}^{[2t-Mt^{1/3}-1]}
    \frac{(2t)^2}{q^{3/2}(2t-q)^{5/2}}
    =0
\end{equation}
and
\begin{equation}
    \lim_{L\rightarrow\infty}\limsup_{ t\to\infty}
    \sum_{q=L+1}^{[2t-Mt^{1/3}-1]}
    \frac{(2t)^2}{q^2(2t-q)^2}
    =0.
\end{equation}
\end{lemma}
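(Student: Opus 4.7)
The plan is a two-region comparison with an integral, cutting the sum at $q=t$ to separate the behaviour near the left endpoint (where $2t-q \asymp t$) from the behaviour near the right endpoint (where $q \asymp t$ but $2t-q$ is small). In both cases the summands are monotone on each side of $q=t$ after absorbing the slowly-varying factor, so the sums are comparable to $\int f(q)\,dq$ by standard monotone bounds.

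For the first sum, split $\sum_{q=L+1}^{[2t-Mt^{1/3}-1]} = \sum_{q=L+1}^{[t]} + \sum_{q=[t]+1}^{[2t-Mt^{1/3}-1]}$. On the left piece use $2t-q \ge t$ to get
\begin{equation*}
\frac{(2t)^2}{q^{3/2}(2t-q)^{5/2}} \le \frac{4}{t^{1/2}q^{3/2}},
\end{equation*}
so that piece is bounded by $\frac{C}{L^{1/2}t^{1/2}}$, which vanishes as $t\to\infty$. On the right piece use $q\ge t$ and the change of index $k=2t-q$ (running from $Mt^{1/3}$ to about $t$) to get
\begin{equation*}
\frac{(2t)^2}{q^{3/2}(2t-q)^{5/2}} \le \frac{4t^{1/2}}{k^{5/2}},
\end{equation*}
whose sum is dominated by $4t^{1/2}\int_{Mt^{1/3}-1}^{\infty} k^{-5/2}dk \le C'M^{-3/2}$, uniformly in $t$. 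Thus $\limsup_{t\to\infty}$ of the full sum is at most $C'M^{-3/2}$, which vanishes as $M\to\infty$.

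The second sum is handled in exactly the same way but with sharper powers. On $L+1 \le q \le t$ use $2t-q\ge t$ to bound each term by $4/q^2$, giving a tail contribution of at most $C/L$; on $t \le q \le 2t-Mt^{1/3}-1$ use $q\ge t$ to bound each term by $4/(2t-q)^2$, and with $k=2t-q$ the sum is dominated by $4\sum_{k\ge Mt^{1/3}} k^{-2} \le C/(Mt^{1/3})$. Therefore the second sum is bounded by $C/L + C'/(Mt^{1/3})$, whose $\limsup_{t\to\infty}$ is $C/L$, which vanishes as $L\to\infty$.

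There is no real obstacle here; the only point requiring any care is ensuring that the integral comparison is valid for every $t$ in the regime $L+1 \le q \le 2t-Mt^{1/3}-1$ with constants independent of both $L$ and $M$. This follows from the fact that $q^{-3/2}(2t-q)^{-5/2}$ and $q^{-2}(2t-q)^{-2}$ are each monotone on either side of $q=t$, so each piece of the sum is bounded by the corresponding integral plus at most one boundary term, all of which are absorbed in the estimates above.
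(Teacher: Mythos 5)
Your argument is correct, and it is a genuinely cleaner route than the one the paper takes, though both are elementary. The paper applies a single integral comparison $\sum_{q=a}^b s(q) \le \int_{a-1}^{b+1} s(x)\,dx$ (valid because $s$ is first decreasing then increasing on the range), and then evaluates the resulting antiderivatives explicitly: for the first sum $\bigl[\frac{4(-3t^2+6tq-2q^2)}{3t(2t-q)^{3/2}q^{1/2}}\bigr]_L^{2t-Mt^{1/3}}$, for the second $\bigl[\frac{-2t+2q}{q(2t-q)}-\frac{1}{t}\log\bigl(\frac{2t-q}{q}\bigr)\bigr]_L^{2t-Mt^{1/3}}$, then takes the double limit. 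You instead split at $q=t$, freeze the slowly-varying factor on each half ($2t-q\ge t$ on the left, $q\ge t$ on the right), and reduce to one-sided tail sums of $q^{-3/2}$, $k^{-5/2}$, $q^{-2}$, $k^{-2}$. What this buys is that no antiderivative of the two-variable summand is ever needed, and the roles of $L$ and $M$ become visible at a glance: for the first sum the left endpoint contribution already dies with $t$, so only the right endpoint (controlled by $M$) matters, which is exactly why the outer limit there is $M\to\infty$; for the second sum the right endpoint contribution dies with $t$, leaving the left endpoint (controlled by $L$) in charge, matching the outer limit $L\to\infty$. The paper's single-integral method is tighter notation-wise but requires computing and simplifying the antiderivatives, and the asymmetry between $L$ and $M$ is only seen after that evaluation. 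Both are fully rigorous; yours is the more transparent of the two.
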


\begin{proof}
We use the following basic inequality. Let $a,b$ be integers. Let
$s(x)$ be a positive differentiable function in an interval $[a-1,
b+1]$ and there is $c\in (a, b)$ such that $s'(x)<0$ for $x\in [a-1,
c)$ and $s'(x)>0$ for $x\in (c,b+1]$. Then
\begin{equation}\label{eq:sumint}
    \sum_{q=a}^b s(q)
    \le \int_{a-1}^{b+1} s(x)dx.
\end{equation}

As a function of $0<q<2t$, $\frac{(2t)^2}{q^{3/2}(2t-q)^{5/2}}$
decreases for $0<q<\frac34t$ and then increases for $\frac34
t<q<2t$. Hence
\begin{equation}
\begin{split}
    &\lim_{M\rightarrow\infty}\limsup_{ t\to\infty}
    \sum_{q=L+1}^{[2t-Mt^{1/3}-1]}
    \frac{(2t)^2}{q^{3/2}(2t-q)^{5/2}} \\
    &\quad \le \lim_{M\rightarrow\infty} \limsup_{ t\to\infty} \int_{L}^{2 t-Mt^{1/3}}
    \frac{(2t)^2}{q^{3/2}(2 t -q)^{5/2}}\d q \\
    &\quad =  \lim_{M\rightarrow\infty}\limsup_{ t\to\infty}\left[
    \frac{4(-3t^2+6t q-2 q^2)}{3t (2 t -q)^{3/2}q^{1/2}}\right]_{L}^{2t-Mt^{1/3}}=0.
\end{split}
\end{equation}
As a function of $0<q<2t$, $\frac{(2t)^2}{q^{2}(2t-q)^{2}}$
decreases for $0<q<t$ and then increases for $t<q<2t$. Hence
\begin{equation}
\begin{split}
    &\lim_{L\rightarrow\infty}\limsup_{ t\to\infty}
    \sum_{q=L+1}^{[2t-Mt^{1/3}-1]}
    \frac{(2t)^2}{q^2(2t-q)^2} \\
&\quad \le \lim_{L\rightarrow\infty}\lim_{ t\to\infty} \int_{L}^{2
t -Mt^{1/3}}
\frac{(2t)^2}{q^2(2 t -q)^2}\d q \\
 &\quad  =  \lim_{L\rightarrow\infty}\lim_{ t\to\infty}
 \left[\frac{-2 t +2q}{q(2 t -q)}-\frac{1}{t }\log\left(\frac{2 t -q}{q}\right)
 \right]_{L}^{2 t -Mt^{1/3}}  =  \quad 0.
\end{split}
\end{equation}
\end{proof}

Now we prove the main result of Section~\ref{airy-part}.
\begin{lemma}[Airy part]\label{lem:Airy}
We have
\begin{equation}\label{eq:Airytermall}
\begin{split}
    &\lim_{L,M\rightarrow\infty} \lim_{t\to\infty}
    \Bigg|\sum_{q=L+1}^{[2 t -Mt^{1/3}-1]} \log(\kappa^{-2}_{q-1})
    -\bigg( t^2-2tL+\frac12 L^2\log(2t)-(\frac12L^2-\frac1{12})\log L
    +\frac34 L^2 \\
    &\qquad\qquad\qquad
    - \frac1{12}M^3-\frac18\log M + \frac1{24}\log 2\bigg)
    \Bigg| =0.
\end{split}
\end{equation}
\end{lemma}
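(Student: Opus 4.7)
The plan is to start from the decomposition (\ref{eq:73}), remove the contributions of $R^{(2)},\dots,R^{(5)}$ via the bounds already established, compute the $R^{(1)}$ contribution explicitly, and then match the resulting sum to the claimed expression via Euler--Maclaurin summation together with Stirling's formula and the Barnes $G$-function asymptotics (\ref{G-at-large-z}). Lemmas \ref{lem:R2}, \ref{R3-does-not-contribute}, \ref{R4-does-not-contribute} and \ref{lem:R5} give, for $i=2,3,4,5$,
\eqnn
|R^{(i)}_{22}(0)-\sqrt{\gamma-1}\, R^{(i)}_{21}(0)| \;\le\; \frac{c(2t)^2}{q^{3/2}(2t-q)^{5/2}}.
\endeqnn
By Lemma \ref{lem:R1explicit}, the denominator $1+R^{(1)}_{22}(0)-\sqrt{\gamma-1}\,R^{(1)}_{21}(0)$ stays uniformly bounded away from zero once $L$ and $M$ are large, so the last logarithm in (\ref{eq:73}) is controlled by a constant multiple of the above bound. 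Lemma \ref{lem:sumqt} then shows its $q$-sum vanishes in the double limit $\lim_{L,M\to\infty}\lim_{t\to\infty}$.

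\textbf{Step 2 (explicit $R^{(1)}$ contribution).} Lemma \ref{lem:R1explicit} together with $q(\gamma-1)=2t-q$ gives
\eqnn
R^{(1)}_{22}(0)-\sqrt{\gamma-1}\,R^{(1)}_{21}(0) \;=\; -\frac{1}{8(2t-q)}-\frac{1}{12q}.
\endeqnn
Expanding $-\log(1+x)=-x+O(x^2)$, the squared error is bounded by $\sum 1/(2t-q)^2 + \sum 1/q^2$, which vanishes in the double limit by the same type of monotone integral argument as in Lemma \ref{lem:sumqt}. Hence the problem reduces to evaluating
\eqnn
S(t,L,M)\;:=\;\sum_{q=L+1}^{n-1}\Bigl\{2t - q\log\tfrac{2t}{q} - q + \tfrac{1}{2}\log\tfrac{2t}{q} + \tfrac{1}{8(2t-q)} + \tfrac{1}{12q}\Bigr\},\qquad n=[2t-Mt^{1/3}],
\endeqnn
and matching its asymptotic behaviour as $t\to\infty$ followed by $L,M\to\infty$ to the right-hand side of (\ref{eq:Airytermall}).

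\textbf{Step 3 (asymptotic evaluation).} Stirling's formula rewrites the summand, up to $O(1/q)$, as
\eqnn
2t - q\log(2t) + \log q! - \log q + \tfrac{1}{2}\log(2t) - \tfrac{1}{2}\log(2\pi) + \tfrac{1}{8(2t-q)} + \tfrac{1}{12q}.
\endeqnn
Since $\sum_{q=L+1}^{n-1}\log q! = \log G(n+1) - \log G(L+2)$, we insert the Barnes $G$-asymptotic (\ref{G-at-large-z}) at both endpoints and Stirling for the linear sums. At the upper boundary $n\sim 2t$, a Taylor expansion in $u=2t-q$ of the antiderivative $F(q)=2tq-\tfrac{q^2}{2}\log(2t/q)-\tfrac{3q^2}{4}$ produces the contribution $t^2 - \tfrac{1}{12}M^3 + o(1)$. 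At the lower boundary $q=L$, the terms $-2tL+\tfrac{1}{2}L^2\log(2t)-\tfrac{1}{2}L^2\log L+\tfrac{3}{4}L^2$ arise from the integral part, while the $-\tfrac{1}{12}\log z$ in (\ref{G-at-large-z}) produces exactly the $+\tfrac{1}{12}\log L$ correction that upgrades $-\tfrac{1}{2}L^2\log L$ to $-(\tfrac{1}{2}L^2-\tfrac{1}{12})\log L$. The two $\zeta'(-1)$ contributions (from $G(n+1)$ and $G(L+2)$) cancel in the difference. Finally, $\sum\tfrac{1}{8(2t-q)}\sim\tfrac{1}{8}\log\tfrac{2t-L}{Mt^{1/3}}$ and $\sum\tfrac{1}{12q}\sim\tfrac{1}{12}\log\tfrac{n}{L}$ combine with the residual $\log(2t)$ and $\log(2\pi)$ terms so that every $\log t$ and $t^\alpha$ (with $0<\alpha<2$) contribution cancels, leaving only the additive constant $\tfrac{1}{24}\log 2$.

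\textbf{Main obstacle.} All the conceptual work is in Steps 1 and 2; the difficulty is entirely the bookkeeping in Step 3. One must verify that the many $t$-dependent subleading pieces (the $t\log t$ from $\sum q\log(2t)$, the $\log t$ from $-\tfrac{1}{12}\log n$ in the Barnes expansion, the $t^{1/3}\log t$ and $t^{2/3}\log t$ pieces from the Taylor expansion at $n$, and the Stirling constants $\tfrac{1}{2}\log(2\pi)$ and $\tfrac{L}{2}\log(2\pi)$ from both Barnes and the elementary sums) cancel exactly. The appearance of the precise constant $\tfrac{1}{24}\log 2$ is the most delicate check: it is what remains after summing the $\log 2$ coefficients produced by each $\log(2t)=\log 2+\log t$ expansion, once the corresponding $\log t$ coefficients have been confirmed to sum to zero.
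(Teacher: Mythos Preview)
Your approach is essentially the paper's: eliminate $R^{(2)},\dots,R^{(5)}$ via Lemmas \ref{lem:R2}--\ref{lem:R5} and \ref{lem:sumqt}, insert the explicit $R^{(1)}$ from Lemma \ref{lem:R1explicit}, and then evaluate the resulting elementary sum using Barnes $G$-asymptotics. Two bookkeeping points are worth tightening.

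First, in Step~1 the bound on $|R^{(i)}_{22}-\sqrt{\gamma-1}\,R^{(i)}_{21}|$ needs a second term $\dfrac{c(2t)^2}{q^{2}(2t-q)^{2}}$, coming from $\sqrt{\gamma-1}=\sqrt{(2t-q)/q}$ multiplied into the $R^{(i)}_{21}$ estimate; this is precisely why Lemma \ref{lem:sumqt} has two separate sums, and without it the $L\to\infty$ part of the double limit is not justified.

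Second, in Step~3 your termwise Stirling substitution is slightly off. Stirling with the $\tfrac{1}{12q}$ correction reads $q\log q-q=\log q!-\tfrac12\log(2\pi q)-\tfrac{1}{12q}+O(q^{-3})$, so after substitution the $\tfrac{1}{12q}$ from the $R^{(1)}$ contribution \emph{cancels} and should not appear in your rewritten summand; the true error is $O(q^{-3})$, which is summable, whereas an $O(1/q)$ remainder would produce an uncontrolled $\log t$. The paper sidesteps this by using the exact identity $\sum_{k=1}^{m-1}k\log k=m\log\Gamma(m)-\log G(m+1)$ for $\sum(q-\tfrac12)\log q$ and only invoking asymptotics at the two endpoints via (\ref{eq:GGlarge}); this is cleaner than summing Stirling errors and makes the cancellation of the $\zeta'(-1)$ constants between the upper and lower endpoints transparent. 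Your route works once the $1/q$ accounting is corrected, and it leads to the same place.
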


\begin{proof}
We first prove that
\begin{equation} \begin{split} \label{Airy1}
\lim_{L,M\rightarrow\infty}\lim_{ t\to\infty}
&\bigg\{\sum_{q=L+1}^{[2 t -Mt^{1/3}-1]}\hspace{-.15in}
\log(\kappa^{-2}_{q-1}) -\sum_{q=L+1}^{[2 t -Mt^{1/3}-1]}
\bigg(-q\big(-\frac{2t}{q}+\log\big(\frac{2t}{q}\big)+1\big)
+\frac{1}{2}\log\big(\frac{2t}{q}\big) \\
&\quad +\frac{1}{8(2t-q)} +\frac{1}{12q}\bigg)\bigg\}=0.
\end{split}
\end{equation}

Using $1+R^{(1)}_{22}(0)-\sqrt{\gamma-1}R^{(1)}_{21}(0)= 1
-\frac{1}{8(2t-q)} -\frac{1}{12q}\ge \frac12$ and using
Lemmas~\ref{lem:R2},~\ref{R3-does-not-contribute},~\ref{R4-does-not-contribute},
and~\ref{lem:R5}, we find that
\begin{equation}
    \bigg|\frac{\sum_{i=2}^5 (R^{(i)}_{22}(0)-\sqrt{\gamma-1}R^{(i)}_{21}(0))}
    {1+R^{(1)}_{22}(0)-\sqrt{\gamma-1}R^{(1)}_{21}(0)}\bigg| \le
    \frac12
\end{equation}
for $L+1\le q\le [2 t -Mt^{1/3}-1]$, when we take $t$ large
enough. Hence using $|\log (1+x)|\le 2|x|$ for $|x|\ge \frac12$,
\begin{equation}\label{eq:Rrestsum}
\begin{split}
    &\lim_{L,M\rightarrow\infty}\lim_{ t\to\infty}
    \Bigg| \sum_{q=L+1}^{[2 t -Mt^{1/3}-1]}
    \log\bigg(1+\frac{\sum_{i=2}^5 (R^{(i)}_{22}(0)-\sqrt{\gamma-1}R^{(i)}_{21}(0))}
    {1+R^{(1)}_{22}(0)-\sqrt{\gamma-1}R^{(1)}_{21}(0)}\bigg) \Bigg| \\
    &\quad \le \lim_{L,M\rightarrow\infty}\lim_{ t\to\infty}
    4\sum_{q=L+1}^{[2 t -Mt^{1/3}-1]}
    \Bigg|\sum_{i=2}^5
    \big( R^{(i)}_{22}(0)-\sqrt{\gamma-1}R^{(i)}_{21}(0)
    \big)\Bigg| \\
    &\quad \le \lim_{L,M\rightarrow\infty}\lim_{ t\to\infty}
    4c\sum_{q=L+1}^{[2 t -Mt^{1/3}-1]}
    \frac{(2t)^{2}}{q^{3/2}(2t-q)^{5/2}} +
    \frac{(2t)^2}{q^2(2t-q)^2} =0
\end{split}
\end{equation}
using~\eqref{eq:Rrestall} and Lemma~\ref{lem:sumqt}.

On the other hand, from Lemma~\ref{lem:R1explicit},
\begin{equation}
    \sum_{q=L+1}^{[2 t -Mt^{1/3}-1]}
    \log(1+R^{(1)}_{22}(0)-\sqrt{\gamma-1}R^{(1)}_{21}(0))
    = \sum_{q=L+1}^{[2 t -Mt^{1/3}-1]}
    \log \bigg( 1 -\frac{1}{8(2t-q)} -\frac{1}{12q}\bigg).
\end{equation}
Using $-x^2\le \log(1-x)+x\le 0$ for $0\le x\le \frac12$,
\begin{equation}\label{eq:R1firstterm}
\begin{split}
    &\Bigg| \sum_{q=L+1}^{[2 t -Mt^{1/3}-1]}
    \log \bigg( 1 -\frac{1}{8(2t-q)} -\frac{1}{12q}\bigg)
    + \sum_{q=L+1}^{[2 t -Mt^{1/3}-1]}
    \bigg( \frac{1}{8(2t-q)} + \frac{1}{12q} \bigg) \Bigg| \\
    &\le \sum_{q=L+1}^{[2 t -Mt^{1/3}-1]} \bigg(
    \frac{1}{64(2t-q)^2} +\frac1{48(2t-q)q}+ \frac{1}{144q^2}
    \bigg)\\
    &\le \int_{L}^{2t-Mt^{1/3}}
    \bigg(
    \frac{1}{64(2t-q)^2} +\frac{1}{96t}\big(\frac1{2t-q}+ \frac1{q}\big)+ \frac{1}{144q^2}
    \bigg) dq \to 0
\end{split}
\end{equation}
as $t\to\infty$ by evaluating the integral explicitly.

Using~\eqref{eq:Rrestsum} and ~\eqref{eq:R1firstterm}
in~\eqref{eq:73}, we obtain~\eqref{Airy1}.

Now we compute each term of the sum in~\eqref{Airy1}. Note that
$[2t-Mt^{1/3}-1]=2t-Mt^{1/3}-1-\epsilon$ for $0\le \epsilon<1$. We
have
\begin{equation}\label{eq:Airyq1}
\sum_{q=L+1}^{[2 t -Mt^{1/3}-1]} \left(-q\left(-\frac{2t}{q}+1\right)\right) =
\sum_{q=L+1}^{[2 t -Mt^{1/3}-1]} (2t-q) = \frac12 (2t-L-1)(2t-L) -
\frac12 (Mt^{1/3}+\epsilon) (Mt^{1/3}+1+\epsilon)
\end{equation}
and
\begin{equation}\label{eq:Airyq2}
    \sum_{q=L+1}^{[2 t -Mt^{1/3}-1]} (-q+\frac12)\log (2t)
    = -\frac12 ((2t-Mt^{1/3}-1-\epsilon)^2-L^2) \log(2t)
\end{equation}
Since for positive integer $m$
\begin{equation}
     \sum_{k=1}^{m-1}k\log k =  m\log ((m-1)!) -  \sum_{k=1}^{m-1}\log (k!)
     = m\log (\Gamma(m)) - \log G(m+1),
\end{equation}
we find that
\begin{equation}\label{eq:Airyq3}
\begin{split}
    \sum_{q=L+1}^{[2 t -Mt^{1/3}-1]} \left(q-\frac12\right)\log q
    = & \left(2t-Mt^{1/3}-\epsilon-\frac12\right) \log
    \Gamma(2t-Mt^{1/3}-\epsilon) -\log G(2t-Mt^{1/3}-\epsilon+1)\\
    & -\left(L+\frac12\right)\log \Gamma(L+1) +\log G(L+2)
\end{split}
\end{equation}
Note that from Stirling's formula for the Gamma function and the
asymptotcs~\eqref{G-at-large-z} for the Barnes G-function, as
$z\to\infty$,
\begin{equation}\label{eq:GGlarge}
  \left(z+\frac12\right)\log \Gamma(z+1)-\log G(z+2)
  = \left(\frac{z^2}2-\frac16\right)\log z - \frac{z^2}4+\frac{z}2 -
  \frac14\log(2\pi)+\frac1{12} -\zeta'(-1) + o(1).
\end{equation}
Now using the fact that
\begin{equation}\label{eq:harmonic}
  \lim_{K\to\infty} \big|
\sum_{q=1}^{K}\frac{1}{q}- \log{K}-\gamma\big| =0,
\end{equation}
where $\gamma$ is Euler's constant, we obtain
\begin{equation}\label{eq:Airyq4}
\begin{split}
    &\lim_{t\to\infty}
    \Bigg|\sum_{q=L+1}^{[2 t -Mt^{1/3}-1]}\frac1{8(2t-q)}
    - \frac18\log(2t)+\frac18\log(Mt^{1/3}) \Bigg| =0
\end{split}
\end{equation}
and
\begin{equation}\label{eq:Airyq5}
\begin{split}
    &\lim_{t\to\infty}
    \Bigg|\sum_{q=L+1}^{[2 t -Mt^{1/3}-1]}
    \frac1{12q} -
    \frac1{12}\log (2t) -\frac1{12}\gamma +\frac1{12}\sum_{q=1}^L \frac1{q}\Bigg| =0.
\end{split}
\end{equation}
Therefore,
using~\eqref{eq:Airyq1},~\eqref{eq:Airyq2},~\eqref{eq:Airyq3},~\eqref{eq:Airyq4},~\eqref{eq:Airyq5},
and~\eqref{eq:GGlarge}, we obtain
\begin{equation}\label{eq:Aipartqq}
\begin{split}
    &\lim_{ t\to\infty} \Bigg| \sum_{q=L+1}^{[2 t -Mt^{1/3}-1]}
    \left(-q\big(-\frac{2t}{q}+\log\big(\frac{2t}{q}\big)+1\big)
+\frac{1}{2}\log\big(\frac{2t}{q}\big)+\frac{1}{8(2t-q)}
+\frac{1}{12q}\right)\\
&\qquad - \bigg(t^2-2tL+\frac{L^2}{2}\log(2t) \bigg)
    \Bigg| \\
    & = - \frac1{12}M^3-\frac18\log M + \frac1{24}\log
    2  -\frac14\log(2\pi) + \frac1{12}-\zeta'(-1)\\
    &\qquad - \frac12(L+1)
    \left(L+\frac12\right)\log \Gamma(L+1) + \log G(L+2)
    +\frac1{12}\bigg(\gamma-\sum_{q=1}^L \frac1{q}\bigg).
\end{split}
\end{equation}
Hence using~\eqref{eq:GGlarge} and~\eqref{eq:harmonic} again, we
obtain~\eqref{eq:Airytermall}.
\end{proof}

\section{Proof of (\ref{Fdown}) in Theorem~\ref{thm1}: computation of $\boldsymbol{F(x)}$}\label{sec:Fproof}

Recall equation (\ref{sum-parts}) which we rewrite here: \eq
\label{sum-parts-rewrite} \log(e^{-t^2}D_{n}) = -t^2 +
\underbrace{\log(D_L)}_\text{\vspace{.2in}exact part} +
\underbrace{\sum_{q=L+1}^{[2 t -Mt^{1/3}-1]}
\log(\kappa_{q-1}^{-2})}_\text{Airy part} + \underbrace{\sum_{q= [2
t -Mt^{1/3}]}^{[2 t +xt^{1/3}]}
\log(\kappa_{q-1}^{-2})}_\text{Painlev\'e part}.
\endeq
For the Painlev\'e part, from \cite{Baik:1999},
\begin{equation}\label{eq:Ppartqq}
\begin{split}
  \lim_{t\to\infty} \sum_{q= [2t -Mt^{1/3}]}^{[2 t +xt^{1/3}]}
  \log(\kappa_{q-1}^{-2})
  &= -\int_{-M}^x R(y)dy \\
  &= -\int_{-M}^x \bigg( R(y) -\frac14y^2+\frac1{8y}  \bigg) dy
    + \frac1{12}x^3 - \frac18\log|x|\\
    &\qquad  + \frac1{12}M^3+\frac18\log M.
\end{split}
\end{equation}
By combining~\eqref{eq:Ppartqq},~\eqref{eq:Airytermall},
and~\eqref{exact-part-result}, we obtain
\begin{equation}
\begin{split}
    &\lim_{M, L\to\infty} \lim_{t\to\infty} \log(e^{-t^2}D_n(t)) \\
    &= -\int_{-M}^x \bigg( R(y) -\frac14y^2+\frac1{8y}  \bigg) dy
    + \frac1{12}x^3 - \frac18\log|x| + \frac1{24}\log 2 +\zeta'(-1).
\end{split}
\end{equation}

\section{Proof of (\ref{Edown}) in Theorem~\ref{thm1}: computation of
$\boldsymbol{E(x)}$}
\label{E-of-x}

Let \eq I_j(2t) = \frac{1}{2\pi}\int_{-\pi}^\pi
e^{2t\cos\theta}e^{ij\theta}\d\theta.
\endeq
Set (see \cite{Baik:2001a}) \eq D_\ell^{++}(t) =
\det(I_{j-k}(2t)+I_{j+k+2}(2t))_{0\leq j,k \le \ell-1},
\endeq
\eq D_\ell^{-+}(t) = \det(I_{j-k}(2t)+I_{j+k+1}(2t))_{0\leq j,k
\le \ell-1}.
\endeq
It is shown in Corollary 7.2 of \cite{Baik:2001b} that for
\begin{equation}
    \ell=[t+\frac{x}{2}t^{1/3}],
\end{equation}
where $x$ lies in a compact subset of $\mathbb{R}$, we have \eq
\label{D++ell-to-FE}
\lim_{t\to\infty}|e^{-t^2/2}D_{\ell-1}^{++}(t)-F(x)E(x)| = 0,
\endeq
\eq \label{D-+ell-to-FE}
\lim_{t\to\infty}|e^{-t^2/2-t}D_\ell^{-+}(t)-F(x)E(x)| = 0.
\endeq
In \cite{Baik:2001b}, the above results are shown (with
$\widehat{x}$ in place of $x$) for the alternate scaling
$t=\ell-\frac{\widehat{x}}{2}\ell^{1/3}$.  With the above scaling
$\ell=[t+\frac{x}{2}t^{1/3}]$, we find $\widehat{x} =
x(1+\frac{x}{2}t^{-2/3})^{-1/3}$.  Since $x$ is in a compact set,
and $E(x)$ and $F(x)$ are continuous, the above results follow. From
equations~\eqref{D++ell-to-FE} and~\eqref{D-+ell-to-FE}, for a fixed
$x\in\mathbb{R}$, \eq \label{F2E2-ito-Ds} F(x)^2E(x)^2 =
\lim_{t\to\infty}e^{-t^2-t}D_{\ell-1}^{++}D_\ell^{-+}, \qquad
\ell=[t+\frac{x}{2}t^{1/3}].
\endeq

Let $\pi_{j}(z;t)$ be the monic orthogonal polynomial of degree
$k$ with respect to the weight
$\frac{1}{2\pi}e^{2t\cos\theta}\d\theta$ on the unit circle, as
introduced in Section \ref{intro}. It is shown in Corollary 2.7 of
\cite{Baik:2001a} that (cf.~\eqref{eq:produp} above) \eq
\label{D-ell++} e^{-t^2/2}D_\ell^{++}(t)=\prod_{j=\ell}^{\infty}
\kappa^2_{2j+2}(t)(1-\pi_{2j+2}(0;t)) = \prod_{j=\ell}^\infty
\frac{\kappa_{2j+1}^2}{1+\pi_{2j+2}(0;t)},
\endeq
\eq
\label{D-ell-+}
e^{-t^2/2-t}D_\ell^{-+}(t)=\prod_{j=\ell}^{\infty} \kappa_{2j+1}^2(t)(1+\pi_{2j+1}(0;t)) = \prod_{j=\ell}^\infty\frac{\kappa_{2j}^2}{1-\pi_{2j+1}(0;t)},
\endeq
where the last equalities in~\eqref{D-ell++} and~\eqref{D-ell-+}
use the basic identity (see e.g \cite{Szego:1975-book}) \eq
1-\pi_k^2(0;t) = \frac{\kappa_{k-1}^2}{\kappa_k^2}.
\endeq
Using equations (\ref{D-ell++}) and (\ref{D-ell-+}), we can write
\eq
D_{\ell-1}^{++} = D_{L-1}^{++}\prod_{j=L}^{\ell-1}\frac{D_j^{++}}{D_{j-1}^{++}} = D_{L-1}^{++}\prod_{j=L}^{\ell-1}\kappa_{2j-1}^{-2}(t)(1+\pi_{2j}(0;t)),
\endeq
\eq D_\ell^{-+} =
D_L^{-+}\prod_{j=L+1}^\ell\frac{D_j^{-+}}{D_{j-1}^{-+}} =
D_L^{-+}\prod_{j=L+1}^\ell
\kappa_{2j-2}^{-2}(t)(1-\pi_{2j-1}(0;t)),
\endeq
and hence we have \eq D_{\ell-1}^{++}D_\ell^{-+} =
D_{L-1}^{++}D_L^{-+}\prod_{k=2L-1}^{2\ell-2}\kappa_k^{-2}(t)\cdot\prod_{j=L}^{\ell-1}\left[(1+\pi_{2j}(0;t))(1-\pi_{2j+1}(0;t))\right].
\endeq
Using (recall that $D_\ell=\det(I_{j-k}(2t))_{0\le j,k\le \ell-1}$
is the $\ell \times \ell$ Toeplitz determinant given in~\eqref{Dn})
\eq \frac{D_a}{D_b} = \prod_{j=b}^{a-1}\kappa_j^{-2}(t),
\endeq
we find that \eq D_{\ell-1}^{++}D_\ell^{-+} =
D_{L-1}^{++}D_L^{-+}\frac{D_{2\ell-1}}{D_{2L-1}}\prod_{j=L}^{\ell-1}[(1+\pi_{2j}(0;t))(1-\pi_{2j+1}(0;t))].
\endeq
Inserting this equation into~\eqref{F2E2-ito-Ds}, and
using~\eqref{eq:DntoF2}, \eq
\begin{split}
F(x)^2E(x)^2 & = F_2(x)\lim_{t\to\infty}
e^{-t}\frac{D_{L-1}^{++}D_L^{-+}}{D_{2L-1}}
\prod_{j=L}^{\ell-1}[(1+\pi_{2j}(0;t))(1-\pi_{2j+1}(0;t))].
\end{split}
\endeq
Hence we find (cf.~\eqref{eq:proddown2})
\begin{equation}
    E(x)^2= \lim_{t\to\infty}
e^{-t}\frac{D_{L-1}^{++}D_L^{-+}}{D_{2L-1}}
\prod_{j=L}^{\ell-1}[(1+\pi_{2j}(0;t))(1-\pi_{2j+1}(0;t))], \qquad
\ell=\left[t+\frac{x}{2} t^{1/3}\right].
\end{equation}

In analogy to equation~\eqref{sum-parts}, we write
\eq
\begin{split}
\label{parts-of-2logE} 2\log E(x) & = \lim_{L, M\to\infty}
\lim_{t\rightarrow\infty}\Bigg( -t +
\underbrace{\log\frac{D_{L-1}^{++}D_L^{-+}}{D_{2L-1}}}_\text{Exact
part} + \underbrace{\sum_{j=L}^{[t-\frac{M}{2}t^{1/3}]}
\log[(1+\pi_{2j}(0;t))(1-\pi_{2j+1}(0;t))]}_\text{Airy part}\\
   &  \hspace{1.5in}+ \underbrace{\sum_{j=[t-\frac{M}{2}t^{1/3}+1]}^{[t+\frac{x}{2} t^{1/3}-1]}
   \log[(1+\pi_{2j}(0;t))(1-\pi_{2j+1}(0;t))]}_\text{Painlev\'e part} \Bigg).
\end{split}
\endeq
We compute each term as in the case of $\log F(x)$.

\begin{lemma} {\rm (The Painlev\'e part)} \label{painleve-lemma}
We have for $x<0$ and $M>0$, \eq
\begin{split}
&\lim_{t\to\infty}\sum_{j=[t-\frac{M}{2}t^{1/3}+1]}^{[t+\frac{x}{2}
t^{1/3}-1]}\log[(1+\pi_{2j}(0;t))(1-\pi_{2j+1}(0;t))] \\
&\quad = \int_{-M}^x\left(q(y)-\sqrt{\frac{-y}{2}}\right)\d y +
\frac{\sqrt{2}}{3}M^{3/2} - \frac{\sqrt{2}(-x)^{3/2}}{3}.
\end{split}
\endeq
\end{lemma}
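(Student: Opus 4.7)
The strategy is to mirror the Painlev\'e-part argument carried out for $F(x)$ in \eqref{eq:Ppartqq}, with an asymptotic for $\pi_q(0;t)$ replacing the one for $\kappa_q^2$ in \eqref{eq:PIIcov}. The essential input, which I take from the Riemann-Hilbert analysis of \cite{Baik:1999, Baik:2001a, Baik:2001b}, is the Painlev\'e-regime expansion
\[
(-1)^q\,\pi_q(0;t) \;=\; \frac{q(y)}{t^{1/3}} \;+\; O(t^{-2/3}),\qquad q=[2t+yt^{1/3}],
\]
uniform for $y$ in a compact subset of $\mathbb{R}$. This is the analogue of \eqref{eq:PIIcov} for $\pi_q(0;t)$ and follows from the same RHP machinery: either via $\pi_q(0)=(-1)^q m^{(5)}_{11}(0)$ together with a Painlev\'e~II parametrix near $\xi\to 1$, or from the identity $\pi_q(0;t)^2=1-\kappa_{q-1}^2/\kappa_q^2$ combined with \eqref{eq:PIIcov} and $R'(y)=-q(y)^2$. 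The overall sign is fixed by matching the outer-regime formula $\pi_q(0)\sim(-1)^q\sqrt{(2t-q)/(2t)}$ with the asymptote $q(y)\sim\sqrt{-y/2}$ as $y\to-\infty$.

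Granting this, set $u_q:=(-1)^q\pi_q(0;t)$, so that $\pi_{2j}(0;t)=u_{2j}$ and $\pi_{2j+1}(0;t)=-u_{2j+1}$, and
\[(1+\pi_{2j}(0;t))(1-\pi_{2j+1}(0;t))=(1+u_{2j})(1+u_{2j+1}).\]
With $y_{2j}=2(j-t)/t^{1/3}$ and $y_{2j+1}=y_{2j}+t^{-1/3}$, continuity of $q(\cdot)$ gives $u_{2j}+u_{2j+1}=2q(y_{2j})/t^{1/3}+O(t^{-2/3})$ and $u_{2j}u_{2j+1}=O(t^{-2/3})$. Expanding the logarithm,
\[ \log\bigl[(1+\pi_{2j}(0;t))(1-\pi_{2j+1}(0;t))\bigr]=\frac{2\,q(y_{2j})}{t^{1/3}}+O(t^{-2/3}). \]
The mesh $\Delta y_{2j}=2/t^{1/3}$ is exactly the prefactor in the summand, so the leading term is a Riemann sum converging to $\int_{-M}^x q(y)\,\d y$, while the accumulated remainder $O(t^{1/3})\cdot O(t^{-2/3})=O(t^{-1/3})$ vanishes.

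Finally, to cast the answer in the form stated in the lemma, I rewrite
\[ \int_{-M}^x q(y)\,\d y = \int_{-M}^x\Bigl(q(y)-\sqrt{\tfrac{-y}{2}}\Bigr)\,\d y + \int_{-M}^x\sqrt{\tfrac{-y}{2}}\,\d y, \]
and use the elementary evaluation $\int_{-M}^x\sqrt{-y/2}\,\d y = \tfrac{\sqrt{2}}{3}\bigl(M^{3/2}-(-x)^{3/2}\bigr)$.

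The main obstacle is establishing the Painlev\'e-regime asymptotic for $\pi_q(0;t)$ uniformly on the window $y\in[-M,x]$ with an $O(t^{-2/3})$ remainder. The leading behavior $(-1)^q\sqrt{-y/2}/t^{1/3}$ is already visible from the outer parametrix $m^{(5,\infty)}_{11}(0)$, but the promotion of $\sqrt{-y/2}$ to the full Painlev\'e transcendent $q(y)$, with controlled error, requires a Painlev\'e~II parametrix in place of the Airy parametrix used in Section \ref{airy-part}, along the lines of \cite{Baik:1999}.
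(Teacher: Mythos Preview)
Your argument is correct, but it takes a different route from the paper's. The paper does not invoke the pointwise asymptotic $(-1)^q\pi_q(0;t)=q(y)t^{-1/3}+O(t^{-2/3})$ directly; instead it cites the two packaged tail-sum identities from \cite{Baik:2001b},
\[
\sum_{j=[t+\frac{x}{2}t^{1/3}]}^{\infty}\log(1+\pi_{2j+2}(0;t))\to-\log E(x),\qquad
\sum_{j=[t+\frac{x}{2}t^{1/3}]}^{\infty}\log(1-\pi_{2j+1}(0;t))\to-\log E(x),
\]
applies them at the two endpoints $x$ and $-M$, and subtracts. The finite-window sum is then $2\log E(x)-2\log E(-M)=\int_{-M}^x q(y)\,\d y$ by the very definition of $E$, and the final rewriting is exactly the one you give. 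This is shorter because the Riemann-sum step and the error control are already absorbed into the cited corollaries.

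Your approach instead unpacks one layer: you cite only the pointwise Painlev\'e-regime asymptotic for $\pi_q(0;t)$ and redo the Riemann sum by hand. This is perfectly valid and arguably more transparent, since it parallels the $F(x)$ computation in \eqref{eq:Ppartqq} term by term and makes the mechanism visible. The cost is that you must state the uniform $O(t^{-2/3})$ remainder explicitly and justify that the $O(t^{1/3})$ many error terms sum to $o(1)$; you do this correctly. Both routes rest on the same Painlev\'e~II parametrix from \cite{Baik:2001b}, just cited at different levels of packaging.
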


\begin{proof}
This follows from the following results in Corollaries 7.1 of
\cite{Baik:2001b}: for $x$ in a compact subset of $\mathbb{R}$,
\eq \label{pi-even-to-E}
\lim_{t\to\infty}\left|\sum_{j=[t+\frac{x}{2}t^{1/3}]}^\infty\log(1+\pi_{2j+2}(0;t))+\log
E(x)\right|=0,
\endeq
\eq \label{pi-odd-to-E}
\lim_{t\to\infty}\left|\sum_{j=[t+\frac{x}{2}t^{1/3}]}^\infty\log(1-\pi_{2j+1}(0;t))+\log
E(x)\right|=0.
\endeq
We remark that in \cite{Baik:2001b}, the notations $v(x)=-R(x)$ and
$u(x)=-q(x)$ are used.
\end{proof}

\begin{lemma} \rm{(The Airy part)}
\label{airy-lemma} \eq
\lim_{L,M\to\infty}\lim_{t\to\infty}\left|\sum_{j=L}^{[t-\frac{M}{2}t^{1/3}]}\log[(1+\pi_{2j}(0;t))(1-\pi_{2j+1}(0;t))]
- \left(t-\frac{\sqrt{2}}{3}M^{3/2}-\left(2L-\frac12\right) \log
2\right)\right| = 0.
\endeq
\end{lemma}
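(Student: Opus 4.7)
The plan is to parallel Lemma~\ref{lem:Airy} by applying the Riemann--Hilbert machinery of Section~\ref{airy-part} to $\pi_q(0)$ in place of $\kappa_{q-1}^{-2}$. Starting from~\eqref{eq:piR} and the decomposition $R(0)-I = R^{(1)} + \sum_{i=2}^{5} R^{(i)}$, the first step is a direct substitution of the formulas of Lemma~\ref{lem:R1explicit} to observe the cancellation
\[
\sqrt{\tfrac{\gamma-1}{\gamma}}\,R^{(1)}_{11}(0) \;-\; \tfrac{1}{\sqrt{\gamma}}\,R^{(1)}_{12}(0) \;=\; 0, \qquad \gamma=\tfrac{2t}{q}.
\]
Thus the explicit $R^{(1)}$-correction drops out of $\pi_q(0)$, and combining with Lemmas~\ref{lem:R2}, \ref{R3-does-not-contribute}, \ref{R4-does-not-contribute}, and~\ref{lem:R5} yields
\[
\pi_q(0) \;=\; (-1)^q\sqrt{\tfrac{\gamma-1}{\gamma}} \;+\; E_q, \qquad |E_q|\;\le\; \frac{c\,(2t)^2}{q^{3/2}(2t-q)^{5/2}} + \frac{c\,(2t)^2}{q^2(2t-q)^2},
\]
uniformly for $L+1\le q\le 2t-Mt^{1/3}-1$.

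Because the sign $(-1)^q$ flips between $\pi_{2j}(0)$ and $\pi_{2j+1}(0)$, both factors $1+\pi_{2j}(0)$ and $1-\pi_{2j+1}(0)$ reduce at leading order to $1+\sqrt{(2t-q)/(2t)}$, which lies in $[1,2]$. A first-order Taylor estimate for $\log(1+\cdot)$ then converts the error $E_q$ into an additive $O(|E_q|)$ inside the logarithm; summing over the Airy window and invoking Lemma~\ref{lem:sumqt} makes this error vanish in the iterated limit. The problem thus reduces to showing
\[
\lim_{L,M\to\infty}\lim_{t\to\infty}\Bigl|\widetilde S(L,M,t) - \bigl(t - \tfrac{\sqrt 2}{3}M^{3/2} - (2L-\tfrac12)\log 2\bigr)\Bigr| = 0,
\]
where, with $f(q) := \log\bigl(1+\sqrt{(2t-q)/(2t)}\bigr)$ and $Q := 2[t-\tfrac{M}{2}t^{1/3}]+1$,
\[
\widetilde S(L,M,t) \;:=\; \sum_{q=2L}^{Q} f(q).
\]

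The last step is to evaluate $\widetilde S$ by the Euler--Maclaurin formula. Direct estimation of $f'(q) = -[4t\sqrt{u}(1+\sqrt u)]^{-1}$ (with $u=(2t-q)/(2t)$) shows that all post-trapezoidal corrections are $O(t^{-2/3})$, so
\[
\widetilde S \;=\; \int_{2L}^{Q} f(q)\,dq \;+\; \tfrac{1}{2}\bigl(f(2L)+f(Q)\bigr) \;+\; o(1).
\]
The substitution $u=(2t-q)/(2t)$ together with the antiderivative $F(u) := (u-1)\log(1+\sqrt u) + \sqrt u - u/2$ reduces the integral to $2t\bigl[F(1-L/t)-F(M/(2t^{2/3}))\bigr] + o(1)$. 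Elementary Taylor expansions give
\[
F\bigl(1-\tfrac Lt\bigr) = \tfrac12 - \tfrac{L\log 2}{t} + O(L^2/t^2), \qquad F\bigl(\tfrac{M}{2t^{2/3}}\bigr) = \tfrac{M^{3/2}}{3\sqrt 2\,t} + O(t^{-4/3}),
\]
so the integral contributes $t - 2L\log 2 - \tfrac{\sqrt 2}{3}M^{3/2} + o(1)$, while $\tfrac12(f(2L)+f(Q)) = \tfrac12\log 2 + o(1)$ supplies the remaining $\tfrac12\log 2$. The totals combine into the claimed limit.

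The main obstacle is the bookkeeping of the several small errors. Unlike Lemma~\ref{lem:Airy}, where the $R^{(1)}$-correction itself produced Stirling-type logarithmic terms, here $R^{(1)}$ drops out for $\pi_q(0)$, so the $-\tfrac12\log 2$ adjustment distinguishing $(2L-\tfrac12)\log 2$ from $2L\log 2$ originates entirely from the trapezoidal endpoint correction at $q=2L$. Verifying that the higher Euler--Maclaurin remainders, the lost-integer boundary effect near $q=Q$, and the $E_q$-contributions all decay in the iterated limit is the only real technical work, and each piece is handled by inputs already assembled in Section~\ref{airy-part}.
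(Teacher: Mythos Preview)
Your proposal is correct and follows essentially the same route as the paper: the key cancellation $\sqrt{\gamma-1}\,R^{(1)}_{11}(0)=R^{(1)}_{12}(0)$, the error control via Lemmas~\ref{lem:R2}--\ref{lem:R5} and Lemma~\ref{lem:sumqt}, reduction to $\sum_q \log\bigl(1+\sqrt{(2t-q)/(2t)}\bigr)$, and evaluation by Euler--Maclaurin all match the paper's proof, including the observation that the $\tfrac12\log 2$ arises from the trapezoidal endpoint term at $q=2L$. The only cosmetic difference is the choice of integration variable ($u=(2t-q)/(2t)$ versus the paper's $w=1+\sqrt{(2t-q)/(2t)}$).
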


\begin{proof}
Using~\eqref{eq:piR}, \eq
\begin{split}
\label{goe-airy-proof1}
    &\sum_{j=L}^{[t-\frac{M}{2}t^{1/3}]}\log[(1+\pi_{2j}(0))(1-\pi_{2j+1}(0))]
    = \sum_{q=2L}^{2[t-\frac{M}2t^{1/3}]+1}  \log
    \bigg(1+\sqrt{\frac{\gamma-1}{\gamma}}R_{11}(0;q) -
    \frac{1}{\sqrt{\gamma}}R_{12}(0;q) \bigg).
\end{split}
\endeq
Using~\eqref{eq:R1to5} and the fact that
$\sqrt{\gamma-1}R^{(1)}_{11}(0)-R^{(1)}_{12}(0)=0$, which follows from
Lemma~\ref{lem:R1explicit}, this equals \eq\label{eq:EAirytwo}
\begin{split}
    &\sum_{q=2L}^{2[t-\frac{M}2t^{1/3}]+1}
    \log\bigg(1+\sqrt{\frac{\gamma-1}{\gamma}}\bigg) +
    \sum_{q=2L}^{2[t-\frac{M}2t^{1/3}]+1}  \log
    \bigg(1+\frac{\sum_{i=2}^5 (\sqrt{\gamma-1}R^{(i)}_{11}(0;q) -
    R^{(i)}_{12}(0;q))}{\sqrt{\gamma}+\sqrt{\gamma-1}} \bigg).
\end{split}
\endeq
From
Lemmas~\ref{lem:R2},~\ref{R3-does-not-contribute},~\ref{R4-does-not-contribute},
and~\ref{lem:R5}, the same estimate as in~\eqref{eq:Rrestall}
holds for $\sum_{i=2}^5 (\sqrt{\gamma-1}R^{(i)}_{11}(0;q) -
R^{(i)}_{12}(0;q))$ for $L+1\le q\le [2t-Mt^{1/3}-1]$. Therefore
the same argument as in~\eqref{eq:Rrestsum} implies that the
second sum in~\eqref{eq:EAirytwo} vanishes in the limit.
Therefore,
\begin{equation}
    \lim_{L,M\to\infty}\lim_{t\to\infty}
    \left|\sum_{j=L}^{[t-\frac{M}{2}t^{1/3}]}\log[(1+\pi_{2j}(0;t))(1-\pi_{2j+1}(0;t))]
- \sum_{q=2L}^{2[t-\frac{M}2t^{1/3}]+1}
    \log\bigg(1+\sqrt{\frac{2t-q}{2t}}\bigg) \right|=0.
\end{equation}

We use the Euler-Maclaurin summation formula
\begin{equation}
    \sum_{k=a}^b f(k)  =\int_a^b
    f(x)dx + \frac{f(a)+f(b)}{2} + \frac{f'(b)-b'(a)}{6\cdot4!} + \text{Err}, \qquad
    |\text{Err}|\le \frac{2}{(2\pi)^2} \int_a^b |f^{(3)}(x)|dx.
\end{equation}
Set $f(q)=\log\left(1+\sqrt{\frac{2t-q}{2t}}\right)$ and write
$[t-\frac{M}2t^{1/3}]=t-\frac{M}2t^{1/3}-\epsilon$ where $0\le
\epsilon<1$. Then
\begin{equation}\label{eq:198}
    \lim_{t\to\infty} \frac{f(2t-Mt^{1/3}-2\epsilon+1)+f(2L)}{2}
    =\frac12\log 2,
\end{equation}
and
\begin{equation}\label{eq:199}
    \lim_{t\to\infty} \frac{f'(2t-Mt^{1/3}-2\epsilon+1)+f'(2L)}{6\cdot
    4!}=0.
\end{equation}
Also using $f^{(3)}(q)\ge 0$,
\begin{equation}\label{eq:200}
    |\text{Err}|\le \frac{2}{(2\pi)^2} \int_{2L}^{2t-Mt^{1/3}-2\epsilon+1} |f^{(3)}(q)|dq
    = \frac{2}{(2\pi)^2} (f''(2t-Mt^{1/3}-2\epsilon+1)-f''(2L))
    \to 0,
\end{equation}
as $t\to\infty$. Finally, changing variables to
$w=1+\sqrt{\frac{2t-q}{2t}}$ and setting
$\delta_1=1+\sqrt{\frac{t-L}{t}}$ and
$\delta_2=\sqrt{\frac{Mt^{1/3}+2\epsilon -1}{2t}}$,
\begin{equation}\label{goe-airy-proof2}
\begin{split}
    \int_{2L}^{2t-Mt^{1/3}-2\epsilon+1}\log\left(1+\sqrt{\frac{2t-q}{2t}}\right)\d q
    & = 4t\int_{\delta_1}^{1+\delta_2}(1-w)\log(w)\d w \\
    &= 4t \bigg[\left(w-\frac12 w^2\right)\log w +\frac14 w^2-w
    \bigg]_{\delta_1}^{1+\delta_2}\\
    &= t - \frac{\sqrt{2}}{3}M^{3/2} - 2L\log 2 + O\left(\frac{1}{t^{1/3}}\right).
\end{split}
\end{equation}
Hence the result follows.
\end{proof}

\begin{lemma} \rm{(The exact part)}\label{exact-lemma}
We have \eq
\lim_{L\to\infty}\lim_{t\to\infty}\left[\log\frac{D_{L-1}^{++}D_L^{-+}}{D_{2L-1}}
- (2L-1)\log 2 \right] = 0.
\endeq
\end{lemma}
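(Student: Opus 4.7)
The plan is to mimic the steepest-descent computation of the pure Toeplitz determinant $D_L(t)$ in Section~\ref{sec-exact-part}, applied now to the Toeplitz--Hankel determinants $D_{L-1}^{++}(t)$ and $D_L^{-+}(t)$, and to cancel the resulting leading asymptotics against those of $D_{2L-1}(t)$ coming from \eqref{DLHerm}.

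First I would write each of $D_\ell^{++}(t)$ and $D_\ell^{-+}(t)$ as an $\ell$-fold angular integral on $[0,\pi]^\ell$. Either by direct minor expansion or via Weyl's integration formula on $Sp(2\ell)$ and $SO(2\ell+1)$ applied to the class function $e^{2t\cos\theta}$, the Hankel perturbations $I_{j+k+2}$ and $I_{j+k+1}$ produce the corresponding symplectic and odd-orthogonal Haar weights, giving, up to an elementary prefactor $c_\ell^{++}$,
\[
D_\ell^{++}(t) \;=\; c_\ell^{++}\int_{[0,\pi]^\ell} e^{2t\sum_j\cos\theta_j}\prod_{j<k}(\cos\theta_j-\cos\theta_k)^2\prod_j\sin^2\theta_j\,d^\ell\theta,
\]
and an analogous expression for $D_\ell^{-+}(t)$ in which $\sin^2\theta_j$ is replaced by $\sin^2(\theta_j/2)$. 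Then I would localize each $\theta_j$ near $0$, use $e^{2t\cos\theta}=e^{2t}e^{-t\theta^2+O(\theta^4)}$, $\sin\theta=\theta+O(\theta^3)$, and $\cos\theta_j-\cos\theta_k=\tfrac{1}{2}(\theta_k^2-\theta_j^2)+O(\theta^4)$, rescale $\theta_j=\phi_j/\sqrt{t}$, and then set $y_j=\phi_j^2$. The integrals reduce (modulo $o(1)$ errors) to Selberg integrals for the Laguerre Unitary Ensemble with weights $y^{\pm 1/2}e^{-y}$; their closed-form values $\prod_{j=0}^{\ell-1}j!\,\Gamma(j+\tfrac{3}{2})$ and $\prod_{j=0}^{\ell-1}j!\,\Gamma(j+\tfrac{1}{2})$ are expressible through $G(\ell+a)/G(a)$.

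Combining these leading-order asymptotics with \eqref{DLHerm} for $D_{2L-1}(t)$, the quantity $\log\bigl(D_{L-1}^{++}(t)\,D_L^{-+}(t)/D_{2L-1}(t)\bigr)$ becomes, as $t\to\infty$, a sum of three pieces: a linear-in-$t$ term equal to exactly $+t$ (cancelling the $e^{-t}$ factor in the statement), a collection of logarithmic and power-law $t$-dependent terms that cancel identically in the ratio, and an $L$-dependent combination of Barnes $G$-values of schematic shape $\log\bigl[G(L)\,G(L+\tfrac{1}{2})\,G(L+\tfrac{3}{2})/G(2L)\bigr]$ plus elementary constants. Using the Barnes $G$-function duplication identity (a consequence of the classical $\Gamma$-duplication formula), together with \eqref{G-at-large-z} and the special values \eqref{G-of-one-half} of $G$ at $\tfrac{1}{2},\tfrac{3}{2}$, this combination simplifies to $(2L-1)\log 2 + o(1)$ as $L\to\infty$.

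The main obstacle will be the precise bookkeeping at each stage: the Weyl-integration normalizations, the Jacobians from the changes of variables $\theta_j\to\phi_j\to y_j$, and the constants entering \eqref{G-of-one-half}. One must verify that the transcendental constants, in particular $\zeta'(-1)$ and $\log\pi$, cancel exactly; this is guaranteed by the $G$-duplication identity but requires careful tracking through the three asymptotic expansions simultaneously. It is a more delicate version of the argument that produces \eqref{exact-part-result}, because three large matrix-integral factors (of sizes $L-1$, $L$, and $2L-1$) must now be balanced against one another rather than just one.
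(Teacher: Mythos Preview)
Your approach is essentially the paper's: the authors write $D_{L-1}^{++}$ and $D_L^{-+}$ as multiple angular integrals (via Chebyshev and Jacobi polynomial identities rather than Weyl integration, but these are the same identities), change variables to $x=\cos\theta$, apply steepest descent near $x=1$, and arrive at the same Laguerre Selberg integrals with weights $y^{1/2}e^{-y}$ and $y^{-1/2}e^{-y}$, evaluated in terms of Barnes $G$-functions and then combined with the asymptotics of $D_{2L-1}$ from Section~\ref{sec-exact-part}.

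One correction to your bookkeeping: there is no residual linear-in-$t$ term. The exponential prefactors are $e^{2t(L-1)}$, $e^{2tL}$, and $e^{2t(2L-1)}$ for $D_{L-1}^{++}$, $D_L^{-+}$, and $D_{2L-1}$ respectively, and these cancel exactly in the ratio; the statement as written contains no $e^{-t}$ to absorb (that factor sits separately in \eqref{parts-of-2logE}), and indeed the inner $t\to\infty$ limit in the lemma must be finite. Also, your ``schematic'' $G$-combination is slightly off: the actual numerator is $G(L)\,G(L+1)\,[G(L+\tfrac12)]^2$, not $G(L)\,G(L+\tfrac12)\,G(L+\tfrac32)$, so when you track the constants carefully make sure you use the correct sizes $L-1$ and $L$ for the two Selberg integrals.
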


\begin{proof}
Note (see \cite{Baik:2001a}) that \eqarr \notag
D_{L-1}^{++} & = & \det\left[\frac{1}{2\pi}\int_{-\pi}^\pi[e^{i(j-k)\theta}-e^{i(j+k+2)\theta}]e^{2t\cos\theta}\d\theta\right]_{0\leq j,k \leq L-2} \\
   & = & \det\left[\frac{1}{i\pi}\int_{-\pi}^\pi e^{i(j+1)\theta}\sin((k+1)\theta)e^{2t\cos\theta}\d\theta\right]_{0\leq j,k \leq L-2} \\
\notag
   & = & \det\left[\frac{2}{\pi}\int_0^\pi \frac{\sin((j+1)\theta)}{\sin\theta}\cdot\frac{\sin((k+1)\theta)}{\sin\theta}\sin^2\theta e^{2t\cos\theta}\d\theta\right]_{0\leq j,k \leq L-2}.
\endeqarr
Changing variables to $x=\cos\theta$ and noting that
$\frac{\sin((j+1)\theta)}{\sin\theta}$ is a polynomial in $x$ of
the form $2^jx^j+\cdots$ (a constant multiple of the Chebyshev
polynomial of the second kind), we have \eqarr \notag
D_{L-1}^{++} & = & \det\left[\frac{2^{j+k+1}}{\pi}\int_{-1}^1 x^{j+k}\sqrt{1-x^2}e^{2tx}\d x\right]_{0\leq j,k \leq L-2} \\
   & = & \left(\frac{2}{\pi}\right)^{L-1}\frac{2^{(L-1)(L-2)}}{(L-1)!}\int_{[-1,1]^{L-1}}\prod_{1\leq k<\ell\leq L-1}|x_k-x_\ell|^2\prod_{j=1}^{L-1}\sqrt{1-x_j^2}e^{2tx_j}\d x_j.
\endeqarr
A steepest descent analysis yields that \eq
\lim_{t\to\infty}D_{L-1}^{++} \cdot \left(
\frac{e^{2t(L-1)}}{t^{(L-1)^2+(L-1)/2}\pi^{(L-1)}(L-1)!}\int_{[0,\infty]^{L-1}}\prod_{1\leq
k<\ell\leq
L-1}|y_k-y_\ell|^2\cdot\prod_{j=1}^{L-1}\sqrt{y_j}e^{-y_j}\d y_j
\right)^{-1} = 1.
\endeq
The multiple integral is another Selberg integral (corresponding
to a Laguerre ensemble) which is evaluated explicitly as (see
(17.6.5) of  \cite{Mehta:1991-book}) \eq
\int_{[0,\infty]^{L-1}}\prod_{1\leq k<\ell\leq
L-1}|y_k-y_\ell|^2\cdot\prod_{j=1}^{L-1}\sqrt{y_j}e^{-y_j}\d y_j =
(L-1)!\frac{G(L+\frac{1}{2})G(L)}{G(\frac{3}{2})}.
\endeq
Hence \eq \label{DLm1++} \lim_{t\to\infty}D_{L-1}^{++} \cdot
\left( \frac{e^{2t(L-1)}}{t^{(L-1)^2+(L-1)/2}\pi^{(L-1)}} \cdot
\frac{G(L+\frac{1}{2})G(L)}{G(\frac{3}{2})} \right)^{-1} = 1.
\endeq

Similarly,  \eq
\begin{split}
D_L^{-+} & = \; \det\left[\frac{1}{2\pi}\int_{-\pi}^\pi(e^{i(j-k)\theta}+e^{i(j+k+1)\theta})e^{2t\cos\theta}\d\theta\right]_{0 \leq j,k \leq L-1} \\
   & = \; \det\left[\frac{1}{\pi}\int_{-\pi}^\pi e^{i(j+\frac{1}{2})\theta}
   \cos\left(\left(k+\frac{1}{2}\right)\theta\right)e^{2t\cos\theta}\d\theta\right]_{0 \leq j,k \leq L-1} \\
   & = \; \det\left[\frac{1}{\pi}\int_{-\pi}^\pi \cos\left(\left(j+\frac{1}{2}\right)\theta\right)
   \cos\left(\left(k+\frac{1}{2}\right)\theta\right)e^{2t\cos\theta}\d\theta\right]_{0 \leq j,k \leq L-1} \\
   & = \; \det\left[\frac{1}{\pi}\int_{-\pi}^\pi \frac{\cos\big(\big(j+\frac{1}{2}\big)\theta\big)}
   {\cos\frac{\theta}{2}}\frac{\cos\big(\big(k+\frac{1}{2}\big)\theta\big)}{\cos\frac{\theta}{2}}
   \cos^2\left(\frac{\theta}{2}\right) e^{2t\cos\theta}\d\theta\right]_{0 \leq j,k \leq L-1}.
\end{split}
\endeq
Setting $x=\cos\theta$ and noting that
$\frac{\cos((j+\frac{1}{2})\theta)}{\cos\frac{\theta}{2}}$ is a
constant multiple of a Jacobi polynomial
$P_j^{(-\frac{1}{2},\frac{1}{2})}(x)$ of the form
$2^jx^j+\cdots$, we have \eqarr \notag
D_L^{-+} & = & \frac{2^{L(L-1)}}{\pi^L}\det\left[\int_{-1}^1 x^{j+k}\sqrt{\frac{1+x}{1-x}}e^{2tx}\d x\right]_{0 \leq j,k \leq L-1} \\
   & = & \frac{2^{L(L-1)}}{\pi^L L!}\int_{[-1,1]^L}\prod_{1\leq k<\ell\leq L}|x_k-x_\ell|^2\prod_{j=1}^L\sqrt{\frac{1+x_j}{1-x_j}}e^{2tx_j}\d x_j.
\endeqarr
A steepest-descent analysis implies that \eq
\lim_{t\to\infty}D_L^{-+} \cdot \left(
\frac{e^{2tL}}{t^{L^2-L/2}\pi^L L!}\int_{[0,\infty]^L}\prod_{1\leq
k<\ell\leq L}|y_k-y_\ell|^2\prod_{j=1}^L y_j^{-1/2}e^{-y_j}\d y_j
\right)^{-1} = 1.
\endeq
The multiple integral is also a Selberg integral (see (17.6.5) of
\cite{Mehta:1991-book}), and we obtain\eq \label{DL-+}
\lim_{t\to\infty}D_L^{-+} \cdot \left(
\frac{e^{2tL}}{t^{L^2-L/2}\pi^L}\cdot\frac{G(L+1)G(L+\frac{1}{2})}{G(\frac{1}{2})}
\right)^{-1} = 1.
\endeq

Using~\eqref{DLm1++},~\eqref{DL-+}, and \eq \label{D2L-1}
\lim_{t\to\infty} D_{2L-1} \cdot \left(
\frac{e^{4tL-2t}}{(2t)^{2L^2-2L+\frac{1}{2}}(2\pi)^{L-\frac{1}{2}}}G(2L)
\right)^{-1} = 1,
\endeq
which follows from Section~\ref{sec-exact-part} (the exact part for
$F_2(x)$), we obtain \eq \label{D++D-+D} \lim_{t\to\infty}\left[
\log\frac{D_{L-1}^{++}D_L^{-+}}{D_{2L-1}} -
\log\left(\frac{2^{2L^2-L}}{\pi^{L-\frac{1}{2}}}\cdot\frac{[G(L+\frac{1}{2})]^2G(L)G(L+1)}{G(\frac{1}{2})G(\frac{3}{2})G(2L)}\right)
\right] = 0.
\endeq
The result is now proved using the properties~\eqref{G-of-one-half}
and~\eqref{G-at-large-z} for the Barnes G-function.
\end{proof}

Combining equation~\eqref{parts-of-2logE} and Lemmas
\ref{painleve-lemma}, \ref{airy-lemma}, and \ref{exact-lemma}, we
obtain \eq 2\log E(x) = \int_{-\infty}^x
\left(q(y)-\sqrt{\frac{-y}{2}}\right)\d y -
\frac{\sqrt{2}(-x)^{3/2}}{3} -\frac{1}{2}\log 2.
\endeq

\bibliographystyle{alpha}

\end{document}